\definecolor{db}{RGB}{0, 0, 130}
\numberwithin{equation}{section}
\definecolor{rp}{rgb}{0.25, 0, 0.75}
\definecolor{dg}{rgb}{0, 0.5, 0}
\newcommand{\R}{\mathbb{R}}
\newcommand{\N}{\mathbb{N}}
\newcommand{\NN}{\mathcal{N}}
\newcommand{\NNN}{\widetilde{\mathcal{N}}}
\newcommand{\MM}{\mathcal{M}}
\newcommand{\E}{\mathbb{E}}
\newcommand{\PP}{\mathbb{P}}
\newcommand{\CC}{\mathcal{C}}
\newcommand{\dd}{\mathrm{d}}   
\newcommand{\pa}{\partial}
\newcommand{\1}{\mathbbm{1}}
\newcommand{\eps}{\varepsilon}
\DeclareMathOperator*{\esssup}{ess\,sup}
\newcommand{\customlabel}[2]{%
   \protected@write \@auxout {}{\string\newlabel {#1}{{#2}{\thepage}{#2}{#1}{}}}%
   \hypertarget{#1}{#2\hspace{-0.14cm}}
}
\newtheorem{Assumption}{Assumption}
\newtheorem{theorem}{Theorem}
\newtheorem*{acknowledgement}{Acknowledgements}
\newtheorem{definition}{Definition}[section]
\newtheorem{corollary}[definition]{Corollary}
\newtheorem{lemma}[definition]{Lemma}
\newtheorem{proposition}[definition]{Proposition}
\newtheorem{remark}[definition]{Remark}
\author{Thomas Cavallazzi\footnote{Universit\'e Paris-Saclay, CentraleSup\'elec and CNRS FR-3487, France. \texttt{thomas.cavallazzi@centralesupelec.fr}.} \and 
Alexandre Richard\footnote{Universit\'e Paris-Saclay, CentraleSup\'elec and CNRS FR-3487, France. \texttt{alexandre.richard@centralesupelec.fr}.}
\and Milica Toma\v sevi\'c\footnote{CMAP, Ecole polytechnique, CNRS, I.P. Paris, 91128 Palaiseau, France. ~\hfill ~\newline\texttt{milica.tomasevic@polytechnique.edu}.
}}
\title{ \Large{\textbf{Quantitative approximation of a Keller--Segel PDE by a branching moderately interacting particle system and suppression of blow-up}}}
\begin{document}

\maketitle

\begin{abstract}
The Keller--Segel PDE is a model for chemotaxis known to exhibit possible finite-time blow-up. Following a seminal work by Tello and Winkler~\cite{TelloWinkler}, a logistic damping term is added in this PDE and local well-posedness of mild solutions is proven. When the space dimension is $2$ or when the damping is strong enough, the solution is global in time. 
In the second part of this work, a microscopic description of this model is introduced in terms of a system of stochastic moderately interacting particles. This system features two main characteristics: the interaction between particles happens through a singular (Coulomb-type) kernel which is attractive; and the particles are subject to demographic events, birth and death due to local competition with other particles. The latter induces a branching structure of the particle system. Then the main result of this work is the convergence of the empirical measure of the particle system towards the  Keller--Segel PDE with logistic damping, with a rate of order $N^{-\frac{1}{2(d+1)}}$. 
\end{abstract}

\noindent\textit{\textbf{Keywords and phrases:} Keller--Segel; moderately interacting particle system; branching processes; suppression of blow-up.} 

\medskip

\noindent\textbf{MSC2020 subject classification: } 60K35, 60H30, 35K55, 35Q70.

\setcounter{tocdepth}{2}
\renewcommand\contentsname{}
\vspace{-1cm}

\tableofcontents

\section{Introduction}

The Keller--Segel model is a system of partial differential equations describing the collective motion of cells, typically bacteria, attracted by a chemical substance they themselves emit. This chemotactic behavior has been studied extensively from both analytical and probabilistic viewpoints. The parabolic-elliptic form of the Keller--Segel system is known to exhibit remarkable phenomena such as pattern formation and finite-time blow-up, depending on the space dimension and the total mass of the initial data, see \emph{e.g.} \cite{KellerSegel,Nagai11,perthame,Blanchet} and references therein. It reads
\begin{equation*}
\left\{
\begin{array}{rcl}
	\partial_t u_t(x) &=& \Delta u_t(x) - \chi \nabla \cdot \big(u_t(x)\, \nabla c_{t} (x)\big), \quad t>0, \, x \in \R^d,\\[0.3em]
	- \Delta c_t(x) &=& u_t(x), \quad t>0, \, x \in \R^d,\\[0.3em]
	u(0,x) &=& u_0(x),
\end{array}
\right.
\end{equation*}
where $\chi>0$ is the intensity of the chemo-attractant. This system is a parabolic-elliptic coupling: the density $u$ diffuses while being attracted by the potential $c$, which models the concentration of chemo-attractant produced by the particles.
For the solution $c$ to the above Poisson equation, it holds
\begin{align}\label{KS_expression_gradient_concentration}
	\nabla c_t = K \ast_{x} u_t,
\end{align}
 where the interaction kernel $K$ is defined by
\begin{equation}\label{eq:defK}
	K(x) \coloneqq -\frac{1}{C_d}\frac{x}{|x|^d},
\end{equation}
with $C_d = d |B_1(0)|$ denoting the volume of the unit ball multiplied by $d$.
Since $K$ is the gradient of the fundamental solution of the Poisson equation, it satisfies equivalently
\begin{align}\label{eq:divK}
	 - \nabla \cdot K = \delta_{0}.
\end{align}
From a PDE perspective, the blow-up corresponds informally to an aggregation of the density into Dirac masses, reflecting the strong attraction induced by the nonlocal drift term.

\medskip

Several mechanisms have been proposed to prevent or delay blow-up in the Keller--Segel system. One natural way to regularize the dynamics is to introduce a logistic source term, modeling birth and death processes that counteract aggregation. In their seminal work, Tello and Winkler~\cite{TelloWinkler} studied the Keller--Segel system with a logistic damping term under Neumann boundary conditions on a bounded domain. They proved global existence and boundedness of classical solutions when the damping is strong enough, showing that the logistic reaction can effectively suppress blow-up. We briefly mention variations on the Keller--Segel model that lead to prevention of blow-up: one can modify the chemotactic sensitivity~\cite{BurgerEtAl} or cell diffusion to account for the volume-filling effect (particles slowing down in crowded regions), see \emph{e.g.}~\cite{CalvezCarrillo}; combining logarithmic chemotactic sensitivity with logistic source to prevent blow-up has been investigated in~\cite{FujieEtAl}; 
cross-diffusion has also been shown to prevent blow-up~\cite{HittmeirJungel,CarrilloHittmeirJungel}; 
 finally we mention that suppression of blow-up in Keller--Segel has been observed through the addition of convection terms or external flows, which can redistribute mass and prevent concentration, see \cite{KiselevXu,IyerXuZlatos,BedrossianHe} for recent developments. Blow-up control through logistic-type reaction terms has also been studied in the case of Dirac-type interactions \cite{ChenDesvillettesLatos}. 

In the present work, we first adapt and extend these ideas to an initial value problem on the whole space $\R^d$, $d \ge 2$, with non regular initial condition. More precisely, for bounded initial conditions, we consider the mild solutions of the following Keller--Segel equation with logistic source:
\begin{equation}\label{eq:KS_EDP}
\left\{
\begin{array}{rcl}
	\partial_t u_t(x) &=& \Delta u_t(x) - \chi \nabla \cdot \big(u_t(x)\, K \ast_x u_t(x)\big)
	+ \nu u_t(x) - \mu u_t^2(x), \quad t>0, \, x \in \R^d,\\[0.3em]
	u(0,x) &=& u_0(x),
\end{array}
\right.
\end{equation}
where $\chi,\mu>0$ and $\nu\ge0$, for which we prove local well-posedness, see Proposition~\ref{prop:explosion-criterion}.
 The competition between the nonlinear aggregation and the logistic damping term $-\mu u^2$ determines whether the system exhibits blow-up or global regularity. In our first main result, Theorem~\ref{Thm_WP_KS}, we show that for $d=2$ or for $\mu$ large enough, the mild solution to \eqref{eq:KS_EDP} is global in time.

\medskip

A major line of research aims to derive such macroscopic PDEs from microscopic particle systems. For the Keller--Segel model, \emph{a first important feature} is the singular and attractive nature of the kernel. 
Several works have established the mean field and propagation of chaos limits for stochastic particle systems interacting through the singular kernel $K$, depending on the intensity $\chi$ of the chemo-attraction, see \emph{e.g.} the works of Fournier and Jourdain~\cite{FournierJourdain}, Bresch, Jabin and Wang~\cite{BJW23} and Tardy~\cite{Tardy}. Uniform-in-time propagation of chaos results have also been obtained for related attractive models such as the logarithmic gas by Chodron de Courcel et al.~\cite{ChodronEtAl}. A main difficulty that arises in these particle systems is the possibility of collisions of particles when $\chi$ is sufficiently large, see Fournier and Tardy~\cite{FournierTardy} where such collisions are described precisely.



In the present work, to connect the microscopic and macroscopic descriptions, we will consider moderately interacting particle systems, where the interaction kernel is regularized at the scale of the number of particles. This procedure departs from the usual mean field description and corresponds to interactions that are more local. This regime has been extensively studied, initially by Oelschl\"ager \cite{Oelschlager85,Oelschlager87}, then by Méléard and Roelly~\cite{MeleardRoelly}, Jourdain and Méléard~\cite{JourdainMeleard}, and Stevens for the Keller--Segel model~\cite{Stevens}; with recent quantitative extensions and convergence rates obtained in Chen et al.~\cite{ChenHolzingerHuo,ChenHolzingerJungel}. The semigroup approach developed by Flandoli, Leimbach and Olivera~\cite{FlandoliLeimbachOlivera} provided an alternative route to approximate nonlinear PDEs such as Fisher--KPP. This approach was later refined in~\cite{Pisa,ORT-Burgers} to handle singular kernels, permitting to obtain rates of convergence in strong topologies such as in Lebesgue or Bessel spaces. Extensions to kinetic and stable jump models were investigated in~\cite{HJMRZ}, and the impact of common noise was studied in~\cite{KnorstOliveraSouza}. A numerical approach to the Keller--Segel and related Coulomb-type interactions dynamics has also been analysed by Cazacu~\cite{Cazacu}.

\medskip

\emph{A second important feature} of the microscopic model we consider here is the presence of demographic events, modelled by a branching mechanism. Unlike the models described in the previous paragraphs in which all particles were alive at all time (except~\cite{FlandoliLeimbachOlivera}), particles are now likely to die or give birth to other particles. This approach to interpret the logistic term in~\eqref{eq:KS_EDP} in terms of demographic events had already appeared in the work of Chauvin et al.~\cite{ChauvinEtAl}, where propagation of chaos and fluctuations were proven; and in Régnier and Talay~\cite{RegnierTalay}, where a convergence rate for a numerical approximation was achieved. More recently, Fontbona and Méléard~\cite{FontbonaMeleard} proved propagation of chaos when multiple species are competing, deriving at the limit a nonlocal Lotka--Volterra cross-diffusion system. This was further investigated by Fontbona and Mu\~noz~\cite{FontbonaMunoz} who obtained an optimal rate of convergence of the empirical measure towards the PDE, when the death rate of particles is global (instead of local herein); see also Claisse et al.~\cite{ClaisseKangTan} for related branching McKean--Vlasov diffusions.

\medskip

The works mentioned in the previous paragraph cover regular interactions. Studying the propagation of chaos for a model with both singular interactions and branching seems to be new in the literature. Beyond this novelty, the main motivation to couple these two features is to show evidence of the phenomenon of suppression of blow-up, \emph{i.e.} to highlight that propagation of chaos may hold on any time interval, while for the particle system without branching, propagation of chaos would only hold on a sufficiently small time interval.

In the present framework, a particle system starts with $N$ initial particles. A particle $k$ alive at time $t$ has appeared in the system at time $T_{0}^k$, which is either $0$ if the particle originates from the initial system, or it is some random time when it was given birth by a mother particle. It will disappear from the system at time $T_{1}^k$, either by giving birth to $2$ new particles with rate $\nu$, or dying at a rate that is proportional to the local concentration of particles around its position $X^{k,N}_{t}$ (see Section~\ref{subsubsec:constructionIPS}). Between these two times, the particle evolves according to the following Keller--Segel dynamics, interacting with other particles alive at time $t$ (denoted by $I_{t}$):
\begin{equation*}
\dd X^{k,N}_{t} = \chi F\Big(\frac{1}{N} \sum_{j \in I_t} K\ast \theta^N (X^{k,N}_t - X^{j,N}_t) \Big)\, \dd t + \sqrt{2}\, \dd B^k_t, \quad t \in [T^k_0, T^k_1),
\end{equation*}
where the $(B^k)_{k}$ are independent standard Brownian motions, $(\theta^N)_{N\in \N^*}$ is a mollifying sequence which accounts for the moderate interaction regime, and $F$ is a cutoff function which appears for technical reasons. At the limit (\emph{i.e.} in the PDE), this cutoff does not appear as it would be applied on bounded terms, so choosing the cutoff large enough makes it transparent for the limit equation.
We are interested in the convergence of the empirical measure of the system
\begin{equation*}
\mu^N_t \coloneqq \frac{1}{N} \sum_{k\in I_{t}} \delta_{X^{k,N}_{t}}
\end{equation*}
as the number of initial particles $N$ tends to infinity. Our approach will consist in mollifying $\mu^N$, namely by introducing $u^N \coloneqq \theta^N \ast \mu^N$, and compare $u^N$ to the mild solution $u$ of the PDE~\eqref{eq:KS_EDP} given by Theorem~\ref{Thm_WP_KS}. The convergence is shown to hold on the time interval $[0,T]$, for any $T$ up to the time of existence given by Theorem~\ref{Thm_WP_KS}. Hence the convergence stated in Theorem~\ref{th:convergence_moderate} reads
\begin{equation*}
	\sup_{t \in [0,T]} \Big( \E \big\Vert u^N_t - u_t \big\Vert_{L^1 \cap L^r(\R^d)}^m \Big)^\frac{1}{m} 
	\lesssim \Big( \E \big\Vert u^N_0 - u_0 \big\Vert_{L^1 \cap L^r(\R^d)}^m \Big)^\frac{1}{m} + N^{-\varrho}, 
\end{equation*}
with a rate $\varrho$ that depends explicitly on the dimension $d$, the integrability parameter $r$ (chosen larger than $d$) and the choice of mollifier $\theta^N$. We refer to Remark~\ref{rk:rate} for a discussion on these parameters, and the conclusion that $\varrho$ can be close to $\frac{1}{2(d+1)}$. An extension of this result permits to obtain the same rate of convergence on the Kantorovich-Rubinstein distance between the genuine empirical measure $\mu^N_{t}$ and $u_{t}$, see Corollary~\ref{cor:KR-convergence}.

\paragraph{Organisation of the paper.}
In Section~\ref{sec:prelim_and_main}, the setting and notations used throughout of the paper are introduced. The main results, Theorem~\ref{Thm_WP_KS} and Theorem~\ref{th:convergence_moderate}, are stated respectively in Subsection~\ref{subsec:WP_PDE} and Subsection~\ref{subsec:ThmConv}. The proof of the former is carried out in Section~\ref{sec:PDE}. 
Finally in Section~\ref{sec:particles}, we prove Theorem~\ref{th:convergence_moderate}. This requires some important intermediate steps, such as establishing the mild SPDE satisfied by $u^N$, see Subsection~\ref{subsec:eqempmeas}, and obtaining a bound on the moments of the mass of the system, see Subsection~\ref{subsec:boundparticles}. A crucial step consists in proving rates of convergence for the stochastic convolution integrals that appear in the SPDE satisfied by $u^N$, and this is done in  Subsection~\ref{subsec:stocint}. Then in Subsection~\ref{subsec:apriorimollempmeas}, we establish an \emph{a priori} bound on the moments of $u^N$, which will be used to carry out the proof of Theorem~\ref{th:convergence_moderate} in Subsection~\ref{subsec:proofThm2}. Finally we prove Corollary~\ref{cor:KR-convergence} in Subsection~\ref{subsec:proofCor}.
	
	
	\medskip
	
	\begin{acknowledgement}
	This work was partially supported by the SDAIM project ANR-22-CE40-0015, jointly funded by the S\~ao Paulo Research Foundation (FAPESP) and the French National Research Agency (ANR). TC acknowledges the support of the Fondation de Mathématiques Jacques Hadamard through the Hadamard lecturer program. AR acknowledges partial support from the MATH-AmSud project 24-MATH-04 Explore SDE.
	\end{acknowledgement}

	\section{Preliminaries and main results}
	\label{sec:prelim_and_main}

In this section, we first introduce some general notations that will be used throughout the paper.  We then state Theorem~\ref{Thm_WP_KS}, which is the main result concerning the well-posedness of the PDE \eqref{eq:KS_EDP} in an extended regime (compared to the usual Keller--Segel PDE). Then the interacting particle system and the branching mechanism are presented, leading to the definition of the empirical measure $\mu^N$ and the mollified empirical measure $u^N$. Once these objects are clearly identified, the main result of this paper, Theorem~\ref{th:convergence_moderate}, is stated.

	\subsection{Notations and standard properties}
	
\begin{itemize}[leftmargin=*]

\item For a Polish space $(X,d_X)$, denote $\mathcal{C}([0,T];X)$ the space of continuous functions from $[0,T]$ to $X$ endowed with the supremum norm.

\item  For functions from $\R^d$ to $\R$, denote the H\"older seminorm of parameter $\delta\in(0,1]$ by
\begin{equation}\label{eq:HolderNorm}
[f]_{\delta} \coloneqq \sup_{x\neq y \in \R^d} \frac{|f(x) - f(y)|}{|x-y|^\delta}.
\end{equation}
The space of continuous and bounded functions on $\R^d$ which have finite $[\cdot]_{\delta}$ seminorm is the H\"older space $\mathcal{C}^\delta(\R^d)$. For $\delta=1$, we will write $ \lVert f\rVert_{\text{Lip}}$ by a slight abuse of notation.

\item We will denote $L^p(\R^d)$ the usual Lebesgue space, or simply $L^p$, for any $p\in [1,\infty]$; the associated norm will be denoted $ \lVert f \rVert_{L^p}$. For the $L^m(\Omega)$ space, we will write $L^m_{\Omega}$ and denote the norm by $ \lVert f \rVert_{L^m_{\Omega}}$.

\item Denote by $\mathcal{P}(\R^d)$ the set of Borel probability measures on $\R^d$, and by $\mathcal{M}(\R^d)$ the space of finite nonnegative measures. 
Following \cite[Section 8.3]{BogachevII}, let us introduce the Kantorovich-Rubinstein metric which reads, for any two measures $\mu$ and $\nu$ on $\R^d$, 
\begin{equation}\label{eq:defKantorovich}
\|\mu - \nu \|_{0} = \sup \left\{ \int_{\R^d} \phi \, d(\mu-\nu) \, ; ~ \phi \text{ Lipschitz  with } \|\phi\|_{L^\infty}\leq 1 \text{ and } \|\phi\|_{\text{Lip}} \leq 1 \right\} ;
\end{equation}
this distance metrises the weak topology on $\mathcal{P}(\R^d)$ and $\mathcal{M}(\R^d)$, see \cite[Theorem 8.3.2]{BogachevII}.
 
 \item $(e^{t\Delta })_{t\geq 0 }$ denotes the semigroup of the heat operator on $\R^d$, \emph{i.e.} for $f \in {L}^p(\R^d)$, 
\begin{equation*}
e^{t \Delta}f (x)  
 = g_{2t} \ast f(x),
\end{equation*}
where $g_{t}(x) = \frac{1}{(2\pi t)^{d/2}} e^{-\frac{|x|^{2}}{2t}}$ is the usual heat kernel on $\R^d$.


\item For $\beta\in \R$ and $p\geq1$, denote $H^{\beta}_{p}(\mathbb{R}^{d})$ the Bessel potential space 
\begin{equation*}
H_{p}^{\beta}(\mathbb{R}^{d}) \coloneqq \Big\{ u \text{ tempered distribution; }  \, \mathcal{F}^{-1}\Big( \big(1+|\cdot|^{2}\big)^{\frac{\beta}{2}}\; \mathcal{F} u(\cdot) \Big) \in  L^p(\mathbb R^d)\Big\},
\end{equation*}
where $\mathcal Fu$ denotes the \emph{Fourier transform} of $u$. This space is endowed with the norm
 \begin{equation}\label{eq:Besselnorm}
 \| u \|_{\beta,p} = \Big\| \mathcal{F}^{-1}\big((1+|\cdot|^{2})^{\frac{\beta}{2}
}\; \mathcal{F} u(\cdot) \big) \Big\|_{L^p},
 \end{equation}
which is the $L^p$ norm associated to the Bessel potential operator $(I-\Delta)^\frac{\beta}{2}$ defined by
\begin{align*}
(I-\Delta)^\frac{\beta}{2} f \coloneqq \mathcal{F}^{-1}\left((1+|\cdot|^2)^{\frac{\beta}{2}} \mathcal{F}f \right) ,
\end{align*}
see \cite[p.180]{Triebel}.

\item When $u$ is a function or stochastic process defined on $[0,T]\times \R^d$, we will  denote $u_{t}$ the mapping  $x\mapsto u(t,x)$.

\item $a\wedge b$ (resp. $a\vee b$) denotes the minimum (resp. the maximum) between $a,b \in \R$. Let us also denote $a^+ = a \vee 0$ and $a^- = (-a)\vee 0$.

\item $\triangle$ will denote a cemetery state in which the particles are placed when they die.

\item $C$ refers to a constant whose value may change from line to line. To keep track of the dependence of a constant in certain parameters $p_{1}, \dots, p_{k}$, we will denote $C_{p_{1},\dots,p_{k}}$. 
We may also use the symbol $\lesssim$ for inequalities that involve multiplicative constants whose precise value is not relevant to our computations.

\end{itemize}

With the previous notations in hand, we recall some standard properties:
\begin{itemize}[leftmargin=*]
\item for any $n\in \N$ and $p\in [1,+\infty]$:
\begin{align}\label{eq:heat-estimate}
\lVert \nabla^n g_{t} \rVert_{L^p} \lesssim  t^{-\frac{1}{2}(n+d(1-\frac{1}{p}))} , \quad t>0.
\end{align}

\item By interpolation (see \emph{e.g.} \cite[Eq. (2) p.87]{RunstSickel}), it follows that for any $n\in \N$, any $\gamma\geq 0$ and any $p\in (1,+\infty)$,
\begin{align*}
\lVert (I-\Delta)^\frac{\gamma}{2} \nabla^n g_{t} \rVert_{L^p} \lesssim  (1\wedge t)^{-\frac{1}{2}(\gamma+n+d(1-\frac{1}{p}))} , \quad t>0.
\end{align*}

\item As a consequence of the previous inequality and a convolution inequality, it follows that for any $1\leq r\leq p$ and any measurable $f$,
\begin{align}\label{eq:Besov-heat-estimate}
\lVert (I-\Delta)^\frac{\gamma}{2} \nabla^n e^{t\Delta} f \rVert_{L^p} \lesssim  (1\wedge t)^{-\frac{1}{2}(\gamma+n+d(\frac{1}{r}-\frac{1}{p}))} \lVert f \rVert_{L^r} , \quad t>0.
\end{align}

\item The following embeddings between Bessel spaces hold (see \emph{e.g.} \cite[Theorem p.31]{RunstSickel}): for any $\beta\leq \beta'\in \R$, any $q\geq p\geq1$,
\begin{align}\label{eq:Bessel_embedding}
H^{\beta'}_{p}(\R^d) \hookrightarrow H^\beta_{p}(\R^d) \hookrightarrow H^{\beta-d(\frac{1}{p}-\frac{1}{q})}_{q}(\R^d) .
\end{align}
In addition, for $\gamma\in (0,1)$ and $q=+\infty$,
\begin{equation}\label{eq:embedHolder}
H^\gamma_{q}(\R^d) \hookrightarrow \mathcal{C}^\gamma(\R^d),
\end{equation}
see \emph{e.g.} \cite[Eq. (9) p.32]{RunstSickel}.
\end{itemize}

	\subsection{Well-posedness of the logistic Keller--Segel PDE}
	\label{subsec:WP_PDE}

The initial condition $u_0$ is supposed to belong to $L^1\cap L^{\infty}(\R^d)$ and to be non-negative with total mass $\int_{\R^d} u_0(x) \, \dd x =1$. Note that with the two non-vanishing constants $\mu$ and $\nu$, the mass of the system is not constant over time.

The PDE \eqref{eq:KS_EDP} has been studied in \cite{TelloWinkler} and in several subsequent works, mostly on a bounded space domain and with Neumann boundary conditions. We aim here to extend this well-posedness to the whole space for mild solutions, assuming only bounded initial conditions.

\begin{definition}[Local mild solution]
	Let $T>0$. A measurable function $u$ defined on $[0,T]\times \R^d$ is said to be a mild solution of \eqref{eq:KS_EDP} if it satisfies the following properties:\begin{enumerate}[label=(\roman*)]
		\item $u \in \CC([0,T];L^1) \cap L^\infty([0,T];L^{\infty})$; 
		\item For any $t \in [0,T]$,
		 \begin{equation}\label{eq:KSmild}
			u_t = e^{t\Delta}u_0 - \chi \int_0^t \nabla \cdot e^{(t-s)\Delta} (u_s\, K\ast u_s) \, \dd s + \int_0^t e^{(t-s)\Delta} (\nu u_s - \mu u_s^2) \, \dd s.
		\end{equation}
	\end{enumerate}
\end{definition}

\begin{definition}[Global mild solution]
A function $u$ defined on $\R_+ \times \R^d$ is said to be a global mild solution to \eqref{eq:KS_EDP} if it is a mild solution on $[0,T]\times \R^d$ for any $T>0$. 
\end{definition}

Our first main result starts with a local well-posedness statement for mild solutions to \eqref{eq:KS_EDP}, followed by a global well-posedness statement which represents the suppression of blow-up phenomenon due to the logistic term when $\mu$ is large enough.

\begin{theorem}\label{Thm_WP_KS}
	Let $d\geq2$ and $\mu,\nu \geq 0$. Assume that $u_{0}\in L^1\cap L^\infty$. 
	\begin{enumerate}[label=(\alph*)]
	\item There exists a maximal existence time $T^*\in (0,+\infty]$ such that for any $T\in (0,T^*)$, the PDE~\eqref{eq:KS_EDP} admits a unique mild solution on $[0,T]$.
	
	\item Assume further that 
	$$\mu > \frac{d-2}{d} \chi.$$
	 Then there exists a unique global mild solution to \eqref{eq:KS_EDP}, \emph{i.e.} $T^*=+\infty$. Moreover, the solution $u$ satisfies 
		\begin{equation*}
			\forall T >0, \quad \sup_{t \in[0,T]} \Vert u_t \Vert_{L^1\cap L^\infty} < + \infty.
		\end{equation*}
	\end{enumerate}
\end{theorem}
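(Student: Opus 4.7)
The plan is to prove part (a) by a Banach fixed-point argument applied to the mild formulation \eqref{eq:KSmild}, together with the usual blow-up alternative. Part (b) is then obtained from an a priori $L^1 \cap L^\infty$ estimate on the solution that rules out finite-time blow-up under the hypothesis $\mu > \chi(d-2)/d$.

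For (a), let $\Phi$ denote the right-hand side of \eqref{eq:KSmild}, viewed as an operator on $X_T = \mathcal{C}([0,T]; L^1 \cap L^\infty)$. The key nonlinear estimate is
\begin{equation*}
\| K \ast u \|_{L^\infty} \lesssim \|u\|_{L^1 \cap L^\infty},
\end{equation*}
obtained by splitting $K = K\, \mathbf{1}_{|x|\leq 1} + K\, \mathbf{1}_{|x|>1}$, the first part lying in $L^p$ for any $p < d/(d-1)$ and the second in $L^q$ for any $q > d/(d-1)$, and applying Young's convolution inequality. Together with the heat estimate $\|\nabla e^{(t-s)\Delta} f\|_{L^\infty} \lesssim (t-s)^{-1/2}\|f\|_{L^\infty}$ (a consequence of \eqref{eq:Besov-heat-estimate}), whose singularity is integrable in $s$, and with the local Lipschitz character of $v \mapsto \nu v - \mu v^2$ on $L^1 \cap L^\infty$, a routine computation shows that $\Phi$ is a contraction on a small ball of $X_T$ for $T$ small enough. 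Extending by iteration provides the maximal existence time $T^* \in (0, +\infty]$ together with the blow-up alternative: $\lim_{t \uparrow T^*} \|u_t\|_{L^1 \cap L^\infty} = +\infty$ whenever $T^* < +\infty$.

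For (b), after upgrading the mild solution to a classical one on $(0, T^*) \times \R^d$ by standard parabolic smoothing (which justifies integration by parts), I would differentiate $\|u_t\|_{L^p}^p$ for suitable $p \geq 2$. Using the identity $\nabla \cdot(K \ast u) = -u$ coming from \eqref{eq:divK}, integration by parts yields
\begin{equation*}
\frac{1}{p}\frac{d}{dt}\|u_t\|_{L^p}^p = -\frac{4(p-1)}{p^2}\|\nabla u_t^{p/2}\|_{L^2}^2 + \left(\frac{\chi(p-1)}{p} - \mu\right)\int_{\R^d} u_t^{p+1}(x)\,dx + \nu \|u_t\|_{L^p}^p.
\end{equation*}
The sign condition $\chi(p-1)/p < \mu$ rearranges to $p < \chi/(\chi - \mu)$ when $\mu < \chi$ and holds for every $p$ when $\mu \geq \chi$; this range of "good" exponents intersects $(d/2, +\infty)$ precisely under the assumption $\mu > \chi(d-2)/d$. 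Fixing any such $p > d/2$, the cubic term and the dissipation can both be dropped, yielding $\|u_t\|_{L^p} \leq \|u_0\|_{L^p} e^{\nu t}$ on any $[0,T] \subset [0, T^*)$, together with a companion space-time bound on $\|\nabla u^{p/2}\|_{L^2}^2$.

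To upgrade the $L^p$ bound to an $L^\infty$ bound I would perform a Moser-type iteration along a sequence $q_k \nearrow +\infty$ starting from $q_0 = p$. For $q > d/2$ the coefficient in front of $\int u^{q+1}$ may now be positive, but Gagliardo--Nirenberg interpolation, anchored on the previously obtained $L^{q_{k-1}}$ bound, produces an estimate of the form $\int u^{q+1} \leq C \|\nabla u^{q/2}\|_{L^2}^{\alpha} \|u\|_{L^{q_{k-1}}}^{\beta}$ with exponent $\alpha < 2$ precisely because $q > d/2$; Young's inequality with small constant then absorbs it into the dissipation term, closing the $L^q$ estimate. Combined with the mass bound $\|u_t\|_{L^1} \leq \|u_0\|_{L^1} e^{\nu t}$ obtained from directly integrating the PDE, this gives uniform $L^1 \cap L^\infty$ control and precludes the blow-up alternative, so $T^* = +\infty$. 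The main obstacle is the sharpness of the threshold $\mu > \chi(d-2)/d$: a naive $L^p$ argument as $p \to +\infty$ only yields the overly strong condition $\mu > \chi$, and recovering the optimal bound really hinges on the observation that $\chi/(\chi - \mu) > d/2$ gives just enough room to place a "critical" exponent $p > d/2$ at which the $L^p$ estimate closes unconditionally, serving as the anchor for the subsequent Gagliardo--Nirenberg bootstrap.
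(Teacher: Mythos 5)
Your part~(a) is essentially the paper's argument: a Banach fixed point in a space of the type $\mathcal{C}([0,T];L^1)\cap L^\infty([0,T];L^\infty)$, using the $L^\infty$ bound on $K\ast u$ from the small/large-ball splitting (the paper's Proposition~\ref{KS_properties_kernel}\ref{item:infboundK}) and the heat kernel smoothing estimate, followed by the standard blow-up alternative. Part~(b) follows the same outline (energy estimate, Gagliardo--Nirenberg, bootstrap) but there are several genuine gaps.

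First, non-negativity of $u$ is never established, yet it is used silently throughout: the mass bound ``from directly integrating the PDE'' identifies $\int u_t$ with $\Vert u_t\Vert_{L^1}$ and drops $-\mu\int u_t^2$, which only works if $u_t\ge 0$; the quantity $u^{p/2}$ must be real-valued for non-integer $p$; and the sign of the term $(\chi(p-1)/p-\mu)\int u^{p+1}$ is only usable if $u^{p+1}\ge 0$. The paper devotes its entire Step~1 to proving $u_t\ge 0$ a.e.\ (via a mollification argument controlling $\Vert[u_t^{(\delta)}]^-\Vert_{L^2}$), and this is a load-bearing step, not a formality.

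Second, your restriction to $p\ge 2$ destroys the sharp threshold in low dimensions. To close the $L^p$ estimate unconditionally you need an exponent $p_0$ with $d/2<p_0<\chi/(\chi-\mu)^+$; the hypothesis $\mu>\chi(d-2)/d$ guarantees this interval is nonempty, but it may lie entirely below $2$. For instance, with $d=2$ and $0<\mu<\chi/2$ the admissible range is $(1,\chi/(\chi-\mu))\subset(1,2)$, and similarly for $d=3$ with $\chi/3<\mu<\chi/2$. The paper treats $p\in(1,2)$ explicitly, which is precisely why the integration by parts there is carried out with the regularized weight $(u_s^{(\delta)}+\eps)^{p-2}$ and a Fatou passage; this case is what makes the condition $\mu>\chi(d-2)/d$ (rather than the crude $\mu>\chi$) accessible.

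Third, the claim that one can ``upgrade the mild solution to a classical one by standard parabolic smoothing'' is asserted but not routine here, because the drift involves the singular kernel $K$ and only $L^1\cap L^\infty$ control on $u$. The paper does not pass to classical solutions at all: it stays with the mild/weak solution, mollifies in space via $u^{(\delta)}=u\ast\rho_\delta$, runs the energy estimate on $u^{(\delta)}$, and then controls the commutator terms (the $R^\delta_t$ term, handled by a Vitali-type argument) before passing $\delta\to0$. If you want to follow the smoothing route you would need to justify it; otherwise the mollification-in-space route is the safer one.

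Finally, your Moser iteration for the $L^\infty$ bound is a different (and probably workable) route than the paper's, but you would need to track the constants along $q_k\nearrow\infty$. The paper instead fixes a single anchor $p_0\in(d/2,\chi/(\chi-\mu)^+)$, uses one Gagliardo--Nirenberg interpolation (with the key inequality $\tfrac{p+1}{p}a<1$ coming from $p_0>d/2$, not from $p>d/2$) to bound $\Vert u_t\Vert_{L^p}$ for each finite $p$, and then obtains $L^\infty$ from the Duhamel formula and a Sobolev embedding $H^\alpha_p\hookrightarrow L^\infty$ with $p>d$. This avoids the limit $p\to\infty$ entirely.
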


The particular case of dimension $2$ is interesting, since the previous theorem states that no blow-up can occur, whatever the strength of the chemo-attractivity parameter $\chi$.

\begin{remark}
The maximality of $T^*$ will be described in Proposition~\ref{prop:explosion-criterion}. It simply means that either $T^* = +\infty$ and the solution is global, or $T^* <\infty$ and in this case the norm of the solution has to diverge as $T$ approaches $T^*$. See \eqref{KS_explosion_criterion} for the precise statement.
\end{remark}



\subsection{Definition of the branching interacting particle system}
\label{subsubsec:constructionIPS}

We now want to approximate the mild solution to \eqref{eq:KS_EDP} by a moderately interacting particle system with births and deaths. Let $N \geq 1$ denote the initial number of particles and $(\xi^{k_{0}})_{k_{0} \in \N^*}$ be a sequence of identically distributed (not necessarily independent) random variables with common law $u_0$, which give the initial positions of the particles. 

During its lifetime, a particle can give birth to two new particles at its current location and disappear, or it can just die and is thus removed from the system. To follow the genealogy of particles, we adopt the so-called Ulam--Harris--Neveu notation for the space of labels of particles:
\begin{equation*}
\Lambda^N \coloneqq \bigcup_{n \in \N} \{1,\dots,N\}\times \{1,2\}^n = \big\{ (k_0,i_1,\dots, i_n), \, i_1,\dots i_n \in \{1,2\}, \, k_0 \in \{1,\dots, N\}, \, n \in \N \big\}.
\end{equation*}

For a particle $k = (k_0,i_1,\dots,i_n) \in \Lambda^N$ with $n \geq 1$, we denote by $(k,-)\coloneqq (k_0,i_1,\dots,i_{n-1})$ its mother. Each particle $k \in \Lambda^N$ which appears in the system has a (almost surely) finite lifetime $L^k = [T^k_0,T^k_1)$, where $T^k_0$ is the stopping time at which the particle appears ($T^k_0$ can be $0$ for the $N$ initial particles) and $T^k_1$ is the stopping time at which it disappears from the system. For now we sketch the mechanism of the demographic events, their precise construction will be given at the end of this section:
\begin{itemize}
\item After some time in the system, a particle $k\in \Lambda^N$ will eventually disappear, either by giving birth to $2$ new particles, or simply by  being killed due to the local competition with other particles. We denote by $T^{k,\text{div}}_1$ the possible time at which $k$ divides and by $T^{k,\text{die}}_1$ the time at which it possibly dies, so that $T^k_1= T^{k,\text{div}}_1  \wedge T^{k,\text{die}}_1$;  denote also by $I_t \subset \Lambda^N$ the set of particles alive at time $t$.

\item \emph{The birth mechanism}: A particle $k \in \Lambda^N$ appears when its mother $(k,-)$ divides, which happens at random exponential rate $\nu$ if the mother has not died before, \emph{i.e.} if $T^{(k,-),\text{div}}_{1}< T^{(k,-),\text{die}}_{1}.$ In this case, we set $T^k_0 = T^{(k,-)}_1$ and $X^k_{T^k_0}\coloneqq \lim_{t \nearrow T^{(k,-)}_1 } X^{(k,-)}_{t} \eqqcolon X^{(k,-)}_{T^k_1}$. 

\item \emph{The death mechanism}: As for its disappearance from the system, it can result from two different type of events, either the particle divides giving birth to two new particles as described above, or it is killed. The latter case can happen due to local competition with other particles, and in this case death happens at an exponential rate proportional to the local density of alive particles; the precise mechanism will be described right after the density of particles will have been introduced in \eqref{eq:density-particles}. 
After a particle dies or divides, we set its value to some ground state $\triangle \notin \R$. If a particle $k \in \Lambda^N$ never appears, for example because its mother died, we set $T^k_0 = T^k_1 = + \infty$ and $X^k_t = \triangle$ for any $t$.
\end{itemize}

 \medskip
 
  Let us now construct the particle system $(X^k_t)_{k \in \Lambda^N, \, t \geq 0}$ between the demographic events. At time $t=0$, the $N$ initial particles satisfy $X^k_{0} = \xi^{k_{0}}$ for $k = (k_{0})$, $k_{0}\in \{1,\dots, N\}$.

%
%
%
Let $\theta \in \CC^{\infty}(\R^d;\R_+)$ with compact support included in the unit ball $B(0,1)$ and such that ${\int_{\R^d} \theta(x)\, \dd x =1}$. Then, for $\alpha>0$, define the mollifying sequence $(\theta^N)_{N\in \N^*}$ as follows:
\begin{equation}\label{eq:def-thetaN}
\theta^N : x \in \R^d \mapsto N^{\alpha d}\theta (N^{\alpha}x).
\end{equation}
	For $A>0$, we will use the smooth cutoff function $F_{A}:\R^d \to \R^d$ defined component-wise for each $k\in \{1,\dots, d\}$ by
	\begin{equation*}
	(F_{A})_{k}(x_{k}) 
	= \left\{
	\begin{array}{ll}
	x_{k} & ~~ \text{ if }~ x_{k}\in [-A,A],\\
	A & ~~ \text{ if }~ x_{k}>A+1,\\
	-A & ~~ \text{ if }~ x_{k}<-A-1,
	\end{array}
	\right.
	\end{equation*}
	and such that $ \lVert (F_{A})_{k}' \rVert_{\infty}\leq 1$, $ \lVert (F_{A})_{k}'' \rVert_{\infty} <\infty$; hence in particular, $ \lVert F_{A} \rVert_{\infty}\leq A+1$.

	Each particle $k \in \Lambda^N$ has the following dynamics during its lifetime:
\begin{equation}\label{eq:IPS}
\dd X^{k}_{t} = \chi F_A \bigg(\frac1N \sum_{j \in I_t} K\ast \theta^N (X^{k}_t - X^{j}_t)\bigg)\, \dd t + \sqrt{2}\, \dd B^k_t, \quad t \in [T^k_0, T^k_1),
\end{equation}
where $(B^k)_{k \in \Lambda^N, N\in \N^*}$ is an i.i.d.\ sequence of standard Brownian motions in $\R^d$ on some filtered probability space $(\Omega,\mathcal{F}, (\mathcal{F}_{t})_{t\geq 0}, \PP)$, independent from the initial positions $(\xi^{k_{0}})_{k_{0}\in \N^*}$ of the particles, and also independent from the demographic events (see further). 

\medskip

We are interested in the convergence of the empirical measure of the system
\begin{equation*}
\mu^N_t \coloneqq \frac{1}{N} \sum_{k\in I_{t}} \delta_{X^k_{t}}
\end{equation*}
and of the mollified empirical measure
\begin{equation}\label{eq:density-particles}
u^N_t \coloneqq \theta^N \ast \mu^N_t = \frac{1}{N} \sum_{k\in I_{t}} \theta^N(\cdot-X^k_{t})
\end{equation}
towards $u_t$, the solution of \eqref{eq:KS_EDP}.

\medskip

Let us finally define precisely the demographic events. 
On the same filtered probability space $(\Omega,\mathcal{F}, (\mathcal{F}_{t})_{t\geq 0}, \PP)$, let $\NN$ be a Poisson random measure on $\R^+ \times \Lambda^N \times \R^+$ with intensity measure $\lambda \otimes m \otimes \lambda$, where $m$ is the counting measure on $\Lambda^N$ and $\lambda$ denotes the usual Lebesgue measure; $\NN$ is further assumed to be independent from $(\xi^{k_{0}})_{k_{0}\in \N^*}$ and $(B^k)_{k \in \Lambda^N, N\in \N^*}$. Then let
\begin{equation}\label{eq:defT1}
\begin{split}
& T^{k,\text{div}}_1 \coloneqq \inf \big\{ t> T_{0}^k:~ \NN\big( [T_{0}^k,t]\times \{k\}\times [0,\nu] \big) \neq 0 \big\}\\
& T^{k,\text{die}}_1 \coloneqq \inf \Big\{ t> T_{0}^k:~ \int_{[T_{0}^k, t] \times \R_{+}} \1_{\big\{\nu\leq \rho < \nu + \mu u^N_{s^-}(X^k_{s^-})\wedge A\big\}}\, \NN (\dd s, \{k\},\dd \rho) \neq 0 \Big\} ,
\end{split}
\end{equation}
%
where we choose $A$ to be the same parameter as in the definition of $F_{A}$.
Finally recall that 
\begin{equation*}
T^k_1 = T^{k,\text{div}}_1 \wedge T^{k,\text{die}}_1.
\end{equation*}

\subsection{Quantitative approximation of the PDE by the particle system}
\label{subsec:ThmConv}

The assumptions on the parameters of the particle systems are of two different natures. First it concerns the regularisation parameter $\alpha$ introduced in \eqref{eq:def-thetaN}.
\begin{Assumption}\label{Assumptions_parameters}
	Let $r \in (d,+\infty)$ and $\gamma\in [\frac{d}{r},1)$. 
	Assume that 
	\begin{equation}\label{Eq:contrainte_beta} 
	0 < \alpha <\frac{1}{2(d +\gamma-\frac{d}{r})} .
	\end{equation} 
\end{Assumption}

Second, it concerns the regularity of the initial law of the particles.
\begin{Assumption}\label{Assumptions_Init}
Let $r \in (d,+\infty)$, $\gamma\in [\frac{d}{r},1)$ and $\alpha$ as in Assumption~\ref{Assumptions_parameters}. For $\mu_{0}^N = \frac{1}{N} \sum_{k=1}^N \delta_{\xi^k}$ and $u_{0}^N = \theta^N\ast \mu^N_{0}$, 
assume that for any $m \geq 1$,
	\begin{equation*}
	\sup_{N\geq 1}  \E \Vert u^N_0 \Vert_{\gamma_{0}, r}^m  <+ \infty \quad \text{for}~ \gamma_{0}\in [0,1)~ \text{such that}~~  \gamma + (\gamma-\gamma_{0})\vee 0 <1 \, ;
	\end{equation*}
	 and assume that for any $\kappa \geq 2$ and $m \geq 1$,
	 $$\sup_{N \geq 1}\E \Big[\langle \mu^N_0, |\cdot|^{\kappa} \rangle^m\Big]  < +\infty.$$ 
\end{Assumption}

For a given time horizon $T>0$ in the PDE, let us define the cutoff threshold $A_T$ by 
\begin{equation}\label{Def_cutoff_limite} 
	A_T\coloneqq \sup_{t \in [0,T]} \Vert u_t \Vert_{L^{\infty}} + \sup_{t \in [0,T]} \Vert K\ast u_t \Vert_{L^\infty};
\end{equation}
we will see that, as a consequence of Proposition~\ref{KS_properties_kernel} and Theorem~\ref{Thm_WP_KS}, $A_{T}$ is finite for any $T<T^*$.

We are now ready to state the main result of the paper, which gives a rate of convergence of the mollified empirical measure towards the solution of the PDE~\eqref{eq:KSmild}.

\begin{theorem} \label{th:convergence_moderate}
Let $d\geq2$ and $\mu,\nu \geq 0$. Assume that $u_{0}\in L^1\cap L^\infty$. 
Recall that $T^*$ is the maximal existence time of the mild solution of \eqref{eq:KS_EDP} given by Theorem~\ref{Thm_WP_KS}. 
Let $T \in (0,T^*)$ and let $A\geq A_T$, for $A_{T}$ as defined in \eqref{Def_cutoff_limite}. Let the Assumptions \ref{Assumptions_parameters} and \ref{Assumptions_Init} hold. Then for any $\eps > 0$ and any $m \geq 1$, there exists a constant $C > 0$ such that for any $ N \in \N^*$,
\begin{equation}\label{eq:quantitative_convergence_KS}
	\sup_{t \in [0,T]} \big\Vert \Vert u^N_t - u_t \Vert_{L^1 \cap L^r} \big\Vert_{L^m_\Omega} \leq C \Big( \big\Vert \Vert u^N_0 - u_0 \Vert_{L^1 \cap L^r} \big\Vert_{L^m_\Omega} + N^{-\varrho +\eps}\Big), 
\end{equation}
where 
\begin{equation}\label{rate_of_cv_thm}
\varrho \coloneqq \min \left\{ \alpha \big(\gamma-\frac{d}{r}\big),\, \frac12 \Big( 1 - 2\alpha d\big( 1 - \frac1r \big) \Big) \right\} .
\end{equation}
\end{theorem}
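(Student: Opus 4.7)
The plan is to compare the mild SPDE satisfied by $u^N$ (derived in Subsection~\ref{subsec:eqempmeas} by applying Itô's formula to $\theta^N(x-X^k_t)$, summing over alive particles and compensating the branching jumps against the Poisson random measure $\NN$) with the mild equation~\eqref{eq:KSmild} for $u$. The SPDE will take the form
\begin{equation*}
u^N_t = e^{t\Delta} u^N_0 - \chi \int_0^t \nabla \cdot e^{(t-s)\Delta}\bigl[u^N_s\, F_A(K\ast u^N_s)\bigr]\, \dd s + \int_0^t e^{(t-s)\Delta}\bigl[\nu u^N_s - (\mu u^N_s \wedge A)\, u^N_s\bigr]\, \dd s + \mathcal{R}^N_t + \mathcal{M}^N_t ,
\end{equation*}
where $\mathcal{R}^N_t$ collects the deterministic mollification remainders (produced by replacing $F_A((K\ast u^N_s)(X^k_s))$ by $F_A((K\ast u^N_s)(\cdot))$ inside the divergence, at spatial scale $N^{-\alpha}$) and $\mathcal{M}^N_t$ is a local martingale arising from the Brownian increments and the compensated branching events. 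Since $A \geq A_T$, the cutoffs are transparent along the PDE, i.e.\ $F_A(K\ast u_s) = K\ast u_s$ and $\mu u_s \wedge A = \mu u_s$ for $s \leq T$, so subtracting~\eqref{eq:KSmild} yields a closed mild equation for $v^N_t \coloneqq u^N_t - u_t$.

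I then linearise the drift difference as
\begin{equation*}
u^N_s F_A(K\ast u^N_s) - u_s F_A(K\ast u_s) = v^N_s\, F_A(K\ast u^N_s) + u_s\bigl[F_A(K\ast u^N_s) - F_A(K\ast u_s)\bigr],
\end{equation*}
and the logistic term analogously, into factors involving $v^N_s$ (again exploiting $A \geq A_T$). The Coulombian kernel is handled via Young's inequality: splitting $K$ into near and far field parts gives $\|K \ast v^N_s\|_{L^\infty} \lesssim \|v^N_s\|_{L^1 \cap L^r}$ as soon as $r > d$. Applying the heat-smoothing estimate~\eqref{eq:Besov-heat-estimate} to the divergence and nonlinearity terms then yields an integral inequality for $\|v^N_t\|_{L^1 \cap L^r}$ with a weakly singular kernel, whose prefactors depend on $\|u^N_s\|_{L^r}$ and $\|u_s\|_{L^\infty}$. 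Uniform-in-$N$ moment bounds on $u^N_s$ are propagated through Subsections~\ref{subsec:boundparticles} (mass bound) and~\ref{subsec:apriorimollempmeas} (Bessel norm bound).

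The two error terms are controlled separately. The deterministic remainder $\mathcal{R}^N_t$ inherits the rate $N^{-\alpha(\gamma-d/r)}$: a Hölder interpolation of $F_A((K\ast u^N_s)(X^k_s)) - F_A((K\ast u^N_s)(x))$ at scale $|x-X^k_s|\leq N^{-\alpha}$ produces a factor $N^{-\alpha(\gamma-d/r)}$ multiplied by a Hölder seminorm of $K\ast u^N_s$, which is bounded via the Bessel embeddings~\eqref{eq:Bessel_embedding}--\eqref{eq:embedHolder} combined with the a priori estimate of Subsection~\ref{subsec:apriorimollempmeas}. The martingale $\mathcal{M}^N_t$ is estimated using the stochastic convolution bounds of Subsection~\ref{subsec:stocint}: Burkholder--Davis--Gundy combined with~\eqref{eq:Besov-heat-estimate} and the scaling $\|\theta^N\|_{L^{r'}} \sim N^{\alpha d(1-1/r)}$ gives an $L^m_\Omega$-norm of order $N^{-\frac12(1 - 2\alpha d(1-1/r))}$. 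Injecting these bounds into the integral inequality for $\|v^N_t\|_{L^1 \cap L^r}$ and applying a singular Grönwall lemma (whose kernel is integrable thanks to~\eqref{Eq:contrainte_beta}) closes the argument and yields~\eqref{eq:quantitative_convergence_KS} with the rate~\eqref{rate_of_cv_thm}.

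The main difficulty is the interplay between three competing effects: the Coulombian singularity of $K$ forces the working norm to embed into $L^\infty$ (hence $r > d$); the branching-induced fluctuations scale adversely with the concentration of $\theta^N$, which favours a small regularisation parameter $\alpha$; while the deterministic mollification error improves with larger $\alpha$. Assumption~\ref{Assumptions_parameters} precisely pins down the window in which both exponents in~\eqref{rate_of_cv_thm} remain positive and the singular Grönwall kernel stays integrable. A secondary but delicate point is to ensure that the cutoffs $F_A$ and $\mu\cdot\wedge A$ do not spoil the comparison with the PDE: this requires $A \geq A_T$ together with uniform-in-$N$ moment bounds on $u^N_s$ and on $(K\ast u^N_s)(X^k_s)$, so that the events on which the cutoffs become active contribute negligibly after taking moments.
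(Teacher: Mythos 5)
Your overall architecture is the same as the paper's: subtract the mild SPDE~\eqref{KS_mollified_empirical_mesure_equation_mild} from~\eqref{eq:KSmild}, decompose the difference into a deterministic mollification remainder plus a martingale contribution, linearise the bilinear terms using that $A\geq A_T$ makes the cutoffs transparent along the PDE, control the martingale via the stochastic convolution estimates of Subsection~\ref{subsec:stocint}, and close with the singular Gr\"onwall lemma. The stochastic rate $N^{-\frac12(1-2\alpha d(1-1/r))}$ is correctly identified and attributed to the scaling of $\theta^N$.

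However, your attribution of the deterministic rate $N^{-\alpha(\gamma-d/r)}$ is wrong, and this is not a cosmetic slip but a misunderstanding of where the Bessel parameter $\gamma$ (hence Assumption~\ref{Assumptions_Init} and the a priori bound of Proposition~\ref{Prop_bounds_Bessel_norm_mollified}) enters. You claim this rate comes from a H\"older interpolation of $F_A((K\ast u^N_s)(X^k_s)) - F_A((K\ast u^N_s)(x))$, with the H\"older seminorm of $K\ast u^N_s$ bounded via Bessel embeddings. In fact, $[K\ast u^N_s]_\zeta$ is controlled directly by Proposition~\ref{KS_properties_kernel}\ref{item:HolderboundK} (a Calder\'on--Zygmund argument, no Bessel spaces needed) with $\zeta = 1 - \frac{d}{r}$, and the resulting mollification error in the chemotaxis drift is $N^{-\alpha(1-d/r)}$, which is \emph{better} than $N^{-\alpha(\gamma-d/r)}$ and therefore does not appear in~\eqref{rate_of_cv_thm}. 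The rate $N^{-\alpha(\gamma-d/r)}$ actually comes from the mollification error in the \emph{logistic death term}, namely the difference $\langle \mu^N_s, (u^N_s(\cdot)\wedge A - u^N_s(x)\wedge A)\theta^N(x-\cdot)\rangle$, see~\eqref{eq:rateu2}: here one needs the H\"older seminorm $[u^N_s]_{\tilde\zeta}$ of $u^N_s$ \emph{itself} with $\tilde\zeta = \gamma-\frac{d}{r}$, which is where the embedding $H^\gamma_r\hookrightarrow\CC^{\gamma-d/r}$ and the moment bound $\sup_N\sup_t (1\wedge t)^{\cdot}\E\|u^N_t\|_{\gamma,r}^m<\infty$ from Proposition~\ref{Prop_bounds_Bessel_norm_mollified} are genuinely needed. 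Your proposal never isolates this logistic mollification term; your sentence ``and the logistic term analogously'' hides the fact that the analogous treatment produces a \emph{strictly worse} exponent than the drift, because $u^N_s$ is less regular than $K\ast u^N_s$. Without this observation, the presence of $\gamma$ in~\eqref{rate_of_cv_thm} is unexplained and the proof does not close.
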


For  two finite measures $\mu,\nu \in \MM(\R^d)$, recall that $ \lVert\mu-\nu\rVert_{0}$ denotes the Kantorovich-Rubinstein distance defined in \eqref{eq:defKantorovich}. We have the following corollary, which gives the convergence of the genuine (\emph{i.e.} non-mollified) empirical measure.
\begin{corollary}\label{cor:KR-convergence}
	Let us assume that the same assumptions as in Theorem~\ref{th:convergence_moderate} hold and let $\varrho$ be given by \eqref{rate_of_cv_thm}. Then for any $\eps \in (0 ,\varrho)$, there exists $C>0$ such that for any $N\geq 1$,
	\begin{equation*}
		\sup_{t \in [0,T]} \big\Vert \Vert \mu^N_t - u_t \Vert_0 \big\Vert_{L^m_\Omega} \leq C \Big( \big\Vert \Vert u^N_0 - u_0 \Vert_{L^1\cap L^r} \big\Vert_{L^m_\Omega} + N^{-\varrho +\eps}\Big).
	\end{equation*}
\end{corollary}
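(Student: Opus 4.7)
The plan is to apply the triangle inequality
\[
\|\mu^N_t - u_t\|_0 \leq \|\mu^N_t - u^N_t\|_0 + \|u^N_t - u_t\|_0,
\]
and to control the two resulting terms separately. The second term is handled by the main theorem; the first term reflects only the cost of mollifying $\mu^N_t$ by $\theta^N$ in the weak topology.

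For the term $\|u^N_t - u_t\|_0$, I would use that an admissible test function $\phi$ in~\eqref{eq:defKantorovich} satisfies $\|\phi\|_{L^\infty}\leq 1$, so that
\[
\left| \int \phi\,(u^N_t - u_t) \, \dd x \right| \leq \|u^N_t - u_t\|_{L^1} \leq \|u^N_t - u_t\|_{L^1 \cap L^r},
\]
yielding $\|u^N_t - u_t\|_0 \leq \|u^N_t - u_t\|_{L^1 \cap L^r}$. An application of Theorem~\ref{th:convergence_moderate} then directly delivers a contribution of the form $C(\| \|u^N_0 - u_0\|_{L^1 \cap L^r}\|_{L^m_\Omega} + N^{-\varrho+\eps})$.

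For the term $\|\mu^N_t - u^N_t\|_0$, I would exploit that $\theta^N$ is supported in $B(0,N^{-\alpha})$ with total mass one, and use the Lipschitz regularity of $\phi$. After swapping integrations,
\[
\int \phi\, \dd(\mu^N_t - u^N_t) = \frac{1}{N}\sum_{k \in I_t} \int \theta^N(z)\big[\phi(X^k_t) - \phi(X^k_t + z)\big]\, \dd z,
\]
and the Lipschitz bound $|\phi(X^k_t) - \phi(X^k_t + z)| \leq |z|$ combined with the support of $\theta^N$ gives $\|\mu^N_t - u^N_t\|_0 \leq N^{-\alpha}\, \mu^N_t(\R^d)$. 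Since the total mass $\mu^N_t(\R^d) = |I_t|/N$ has all moments bounded uniformly in $t \in [0,T]$ and $N$, as established in Subsection~\ref{subsec:boundparticles}, the $L^m_\Omega$ norm of $\|\mu^N_t - u^N_t\|_0$ is of order $N^{-\alpha}$.

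To conclude, it suffices to observe that $\alpha > \varrho$: from~\eqref{rate_of_cv_thm}, $\varrho \leq \alpha(\gamma - d/r) < \alpha$ because $\gamma < 1$ and $d/r > 0$. Thus for any $\eps \in (0,\varrho)$ we have $N^{-\alpha} \leq N^{-\varrho+\eps}$, and the two contributions combine to yield the claimed bound. There is no substantial obstacle: once Theorem~\ref{th:convergence_moderate} is in hand, the corollary reduces to the elementary observation that the error introduced by a compactly supported mollifier in the Kantorovich--Rubinstein distance is controlled by the radius of its support, which is strictly smaller than the rate $N^{-\varrho}$ produced by the theorem.
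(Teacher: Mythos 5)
Your proposal is correct and follows essentially the same route as the paper's own proof: triangle inequality splitting into a mollification error and the quantity controlled by Theorem~\ref{th:convergence_moderate}, the $L^1$ bound on $\|u^N_t-u_t\|_0$ via $\|\phi\|_{L^\infty}\leq 1$, the Lipschitz/support argument giving $\|\mu^N_t-u^N_t\|_0\lesssim N^{-\alpha}\langle\mu^N_t,1\rangle$, and the moment bounds from Proposition~\ref{KS_prop_moment_k_number_ptcles}. Your explicit justification that $\varrho\leq\alpha(\gamma-d/r)<\alpha$ makes precise a step the paper leaves as ``using the definition of $\varrho$''.
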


\begin{remark}[I.i.d. initial conditions]
In Theorem~\ref{th:convergence_moderate} and Corollary~\ref{cor:KR-convergence}, the particles do not need to be initially independent and identically distributed. We refer the reader to the Appendix~B of \cite{ORT-Burgers}, where a rate of convergence is obtained for $\big\Vert \Vert u^N_0 - u_0 \Vert_{L^1\cap L^r} \big\Vert_{L^m_\Omega}$ assuming that the particles are initially i.i.d. with law $u_{0} \in H^\beta_{r}(\R^d)$, for some $\beta>0$. In \cite{HJMRZ}, a similar statement holds in total variation without assuming additional regularity on $u_{0}$.
\end{remark}


\begin{remark}[Heuristics on the rate $\varrho$]
In view of Assumption~\ref{Assumptions_parameters}, we have $\varrho\geq 0$ since $\alpha>0$, $\gamma\geq \frac{d}{r}$ and 
\begin{equation*}
1 - 2\alpha d\big( 1 - \frac1r \big) \geq 1 - 2\alpha \big( d +\gamma - \frac dr \big) > 0.
\end{equation*}
%
We see in \eqref{rate_of_cv_thm} that the rate $\varrho$ is the minimum between two contributions. In fact, the error originates from three different terms:
\begin{enumerate}[label=(\arabic*)]
\itemsep0em 
\item\label{it:contrib1} The first contribution is related to the H\"older regularity of $u^N$, uniformly in $N$, or equivalently to the H\"older regularity of the PDE $u$; we can prove that $[u_{t}]_{\gamma-\frac{d}{r}} \in L^1([0,T])$. This regularity can be leveraged to control the rate of convergence of the death term $-\mu (u^N_{t})^2$ towards $-\mu (u_{t})^2$, see more precisely \eqref{eq:rateu2}, to give the $N^{-\alpha(\gamma-\frac{d}{r})}$ contribution.

\item Similarly, the second contribution comes from the H\"older regularity of $K\ast u_{t}$, which is $1-\frac{d}{r}$, see more precisely \eqref{eq:rateKu}. Since this value is larger than $\gamma-\frac{d}{r}$ obtained in \ref{it:contrib1}, it does not appear in the final result.

\item The third contribution comes from the two stochastic integrals that appear in the Stochastic PDE satisfied by $u^N$. Using an infinite-dimensional version of the BDG inequality in $L^1\cap L^r$, for both Brownian integrals (Proposition~\ref{KS_estimate_Lp_Brownian_integral}) and Poisson integrals (Proposition~\ref{KS_estimate_Lp_Poisson_integral}), we obtain the rate $\frac12 \big( 1 - 2\alpha d\big( 1 - \frac1r \big) \big)$ which is optimal for such stochastic integrals in this $L^1\cap L^r$ norm. 

\end{enumerate}
\end{remark}

\begin{remark}[Optimising $\alpha$ or $\varrho$]
\label{rk:rate}
Playing with the parameters from Assumptions~\ref{Assumptions_parameters} and \ref{Assumptions_Init}, one can try to maximise $\alpha$ to make the particles look more similar to `physical' particles (\emph{i.e.} without smoothing); or try to get $\varrho$ as large as possible to maximise the convergence rate.
	\begin{itemize}
	\itemsep0em 
	\item First,  to make $\alpha$ as large as possible, Assumption~\ref{Assumptions_parameters} shows that one has to take $\gamma-\frac{d}{r}$ close to $0$. In this case, the restriction is
	 \begin{equation*}
	 0 < \alpha < \frac{1}{2d}.
	 \end{equation*}
	Choosing $\alpha$ close to $\frac{1}{2d}$ imposes that the rate of convergence $\varrho$ in Theorem~\ref{th:convergence_moderate} tends to $0$. 
	
	\item Second, to maximise $\varrho$, we assume that $\gamma_{0}$ in Assumption~\ref{Assumptions_Init} can be chosen close to $1$, so that one can also pick ${\gamma\sim1}$. Hence we now want to maximise 
	$$\min\left(\alpha \left(1 - \frac{d}{r} \right), \frac12 \left(1 - 2\alpha d \left(1 - \frac1r\right)\right)\right).$$
	 One can easily check that for $r\sim+\infty$ and $\alpha = \frac{1}{2(d+1)}$, $\varrho$ is maximised and we then have $\varrho = \frac{1}{2(d+1)}- \varepsilon$ for $\varepsilon>0$ small.
	 This is the same maximal rate obtained for the standard Keller--Segel equation in \cite[Section $5.2$]{Pisa}.
	\end{itemize}
\end{remark}

\begin{remark}[Removing the cutoff in \eqref{eq:IPS}]
In a similar framework but with particles living in the torus, an almost sure  convergence was obtained in \cite{ORT-Burgers} for Keller--Segel particles without cutoff. A similar almost sure result would probably hold here too. However in the present setup of $L^m_{\Omega}$ convergence, we leave the question of removing the cutoff for future research, with the aim of controlling the agglomeration of particles using the death mechanism when particles get too close to one another.

We also mention that in \cite{HJMRZ}, the authors work with no cutoff on the particles and manage to prove that $\sup_{N} \sup_{t\in [0,T]} \E[|X^{i,N}_{t}|^m]<\infty$, which is crucial to bound the norm of the stochastic convolution integrals. However this approach, which works for kernels with sub-Coulombic singularity, might not be directly adapted here.
\end{remark}


\section{\emph{A priori} estimates and well-posedness of the PDE}
\label{sec:PDE}

In this section, we prove Theorem~\ref{Thm_WP_KS}. First, some properties of the Keller--Segel kernel are stated. Then, we prove the local well-posedness of Equation~\eqref{eq:KS_EDP} in mild form (see~\eqref{eq:KSmild}), using a contraction principle, and establishing a blow-up criterion for the mild equation. Then an adaptation of the argument from \cite{TelloWinkler}, presented in Proposition~\ref{prop:estimates-maxsol}, gives a condition on $\mu$, $\chi$ and $d$ to ensure the boundedness of the norm of the solution of \eqref{eq:KSmild}. As in \cite{TelloWinkler}, we could have proven Theorem~\ref{Thm_WP_KS} with a more general logistic term $g(u)$ in the PDE~\eqref{eq:KS_EDP}, instead of $\nu u - \mu u^2$; however, since our probabilistic approximation of this term corresponds to a precise interpretation in terms of a constant birth rate $\nu$ for the particles and death rate $\mu u$, we chose to work with this less general form.

\subsection{Properties of the Keller--Segel interaction kernel}

Let us give some regularising properties of the interaction kernel $K$, defined in \eqref{eq:defK}. In particular, we see below that the convolution operator with kernel $K$ transfers integrability to H\"older regularity, which will be useful to obtain a rate of convergence in Theorem~\ref{th:convergence_moderate}.

\begin{proposition}\label{KS_properties_kernel}
	The Keller--Segel kernel $K$ satisfies the following properties:
	\begin{enumerate}[label=(\roman*)]
		
		\item\label{item:infboundK} Let $ p \in [1,d')$ and $q \in (d',+\infty]$, where $d'$ (resp. $p', q'$) denotes the H\"older conjugate of $d$ (resp. $p,q$). There exists $C_{K,d} \equiv C_{K,d,p,q}>0$ such that for any $f \in L^{p'} \cap L^{q'}$, 
		\begin{equation}\label{KS_properties_kernel_L_infty}
			\Vert K\ast f \Vert_{L^{\infty}} \leq C_{K,d} \Vert f \Vert_{L^{p'} \cap L^{q'}}.
		\end{equation}
		
		\item\label{item:HolderboundK} For any $r \in (d, +\infty)$, let $\zeta = 1 - \frac{d}{r}$. Then, there exists $C>0$ such that for any $f \in L^{1}\cap L^r$, 
		$$ [K\ast f]_{\zeta} \leq C \Vert f \Vert_{L^{1} \cap L^r}.$$
	\end{enumerate}
\end{proposition}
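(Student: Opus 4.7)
My plan rests on the fact that $|K(x)| = C_d^{-1}|x|^{-(d-1)}$, so $K$ is locally $L^p$ for $p < d' = d/(d-1)$ and in $L^q$ away from the origin for $q > d'$. Both items will then follow from splitting the convolution integral into a near-singularity piece and a tail piece, and applying Hölder's inequality with appropriate choices of exponents. For item (ii), a cancellation estimate of the form $|K(x-z) - K(y-z)| \lesssim |x-y|\, |x-z|^{-d}$ (valid when $|x-z| \geq 2|x-y|$) will then be the workhorse.

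For item (i), I would simply split
\begin{equation*}
K\ast f(x) = \int_{|x-z|\le 1} K(x-z) f(z)\,\dd z + \int_{|x-z|> 1} K(x-z) f(z)\,\dd z.
\end{equation*}
By Hölder, the first integral is bounded by $\|K \mathbf{1}_{B_1}\|_{L^p} \|f\|_{L^{p'}}$; the condition $p < d'$ is exactly what makes $\int_0^1 s^{-(d-1)p} s^{d-1}\,\dd s$ finite. The second is bounded by $\|K \mathbf{1}_{B_1^c}\|_{L^q} \|f\|_{L^{q'}}$; and $q > d'$ ensures the integral $\int_1^\infty s^{-(d-1)q} s^{d-1}\,\dd s$ converges at infinity. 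Summing gives the announced bound uniformly in $x$.

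For item (ii), fix $x,y \in \R^d$ and set $\delta := |x-y|$. When $\delta \geq 1$, I would simply use item (i) with the choice $p = r'$ (which satisfies $r' < d'$ since $r > d$) and $q = \infty$ (so $q' = 1$) to obtain $\|K\ast f\|_{L^\infty} \lesssim \|f\|_{L^1 \cap L^r}$, and then $|K\ast f(x) - K\ast f(y)| \le 2\|K\ast f\|_{L^\infty} \le 2\|K\ast f\|_{L^\infty}\, \delta^\zeta$. When $\delta < 1$, I would split the integral $K\ast f(x) - K\ast f(y) = \int (K(x-z) - K(y-z)) f(z)\,\dd z$ according to $\{|x-z|\le 2\delta\}$ versus $\{|x-z| > 2\delta\}$. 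On the near region, bounding the difference crudely by $|K(x-z)| + |K(y-z)|$ (noting $|y-z| \le 3\delta$), Hölder with $L^{r'}$--$L^r$ yields a factor $\bigl(\int_0^{C\delta} s^{-(d-1)r' + d - 1}\,\dd s\bigr)^{1/r'} = C\delta^{1 - d/r} = C\delta^\zeta$, valid precisely because $r' < d'$. On the far region, the mean-value bound $|K(x-z) - K(y-z)| \lesssim \delta\, |x-z|^{-d}$ combined with Hölder gives a factor $\delta \cdot \bigl(\int_{2\delta}^\infty s^{-dr' + d - 1}\,\dd s\bigr)^{1/r'} = C\delta \cdot \delta^{-d/r} = C\delta^\zeta$. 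Summing, $|K\ast f(x) - K\ast f(y)| \lesssim \|f\|_{L^r}\,\delta^\zeta$ on small scales, and patching with the large-scale bound yields $[K\ast f]_\zeta \lesssim \|f\|_{L^1 \cap L^r}$.

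No serious obstacle is expected here, since $K$ is a classical Riesz-type kernel and the two statements amount to an elementary Young/Hölder exercise on the borderline of integrability; the only care-point is checking that the exponents at play ($r' < d'$ from $r > d$, and $p',q'$ chosen to straddle $d'$) are consistent across the two splits.
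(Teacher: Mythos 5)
Your proof of item \ref{item:infboundK} is essentially the paper's: the same split of the convolution over the unit ball and its complement, then H\"older with the integrability exponents $p<d'$ near the origin and $q>d'$ at infinity.

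For item \ref{item:HolderboundK}, however, you take a genuinely different route. The paper notes that $\nabla K$ is a Calder\'on--Zygmund operator, hence bounded on $L^r$ (citing \cite[Theorem~4.4.1]{Grafakos}), and then converts the resulting $L^r$ bound on $\nabla(K\ast f)$ into the $\zeta$-H\"older bound by adapting \cite[Lemma~5.1]{Pisa} --- in effect a Morrey/Sobolev-embedding step. You instead give a direct, self-contained potential-theory estimate: a trivial $L^\infty$ patch for $|x-y|\geq 1$ via item \ref{item:infboundK}, and for $|x-y|<1$ a split of $K\ast f(x)-K\ast f(y)$ according to $\{|x-z|\leq 2|x-y|\}$ (crude bound $|K(x-z)|+|K(y-z)|$, then H\"older in $L^{r'}$--$L^r$, using $r'<d'$) versus $\{|x-z|>2|x-y|\}$ (mean-value bound $|K(x-z)-K(y-z)|\lesssim |x-y|\,|x-z|^{-d}$, valid since $\xi$ on the segment satisfies $|\xi|\geq |x-z|/2$, then H\"older again). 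Both routes are sound. Yours is more elementary and self-contained --- no appeal to CZ theory or an external lemma --- which is appealing if one wants the proof to stand on its own; the paper's argument is shorter once CZ boundedness is admitted and generalises for free to higher-order derivatives or other $L^r$ exponents. One cosmetic remark: in your far-field estimate the single H\"older over $\{|x-z|>2\delta\}$ needs only $r'>1$ for convergence at infinity, so $\|f\|_{L^r}$ alone controls the small-$\delta$ case; the $L^1$ norm only enters through the large-$\delta$ patch via item \ref{item:infboundK}, which is consistent with the stated $L^1\cap L^r$ dependence.
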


\begin{proof}
First, an elementary observation yields that for $B(0,1)$ the unit ball of $\R^d$,
\begin{equation}\label{eq:integrabilityK}
K \in L^p(B(0,1)) \quad \text{and} \quad K \in L^q(B(0,1)^c).
\end{equation}
The proof of \ref{item:infboundK} follows from decomposing the convolution $K\ast f$ as a sum on the unit ball and on its complementary, then applying H\"older's inequality with the exponents identified in \eqref{eq:integrabilityK}.

For \ref{item:HolderboundK}, let us observe first that $\nabla K$ defines a convolution operator of Calder\'on-Zygmund type, in the sense that it satisfies the size, smoothness and cancellation conditions, see Equations (4.4.1)-(4.4.3) in \cite{Grafakos}. Hence it yields a convolution operator that is bounded in any $L^z$, see \cite[Theorem~4.4.1]{Grafakos}:
\begin{align}\label{eq:boundednessGradK}
 \big\|  \nabla K\ast f \big\|_{L^r} \lesssim \|f\|_{L^r} <\infty.
\end{align}
Thus \ref{item:HolderboundK} follows from \eqref{eq:boundednessGradK} and a minor adaptation of \cite[Lemma 5.1]{Pisa}.
\end{proof}

\begin{remark}
Let $r\in (d,+\infty]$. Proposition~\ref{KS_properties_kernel} implies that for any $f \in L^1\cap L^r$, 
		\begin{equation}\label{Bound_K_convol_L_infty}
		\Vert K\ast f\Vert_{L^{\infty}}\leq C_{K,d} \Vert f\Vert_{L^1\cap L^r}.
		\end{equation}
\end{remark}

\subsection{Global well-posedness of the PDE: Proof of Theorem~\ref{Thm_WP_KS}}

The proof of Theorem~\ref{Thm_WP_KS} results from the combination of the two following propositions. 

First, we state the local well-posedness for the mild solution of the Keller--Segel equation~\eqref{eq:KS_EDP}; then in line with classical criteria to determine whether a differential equation has global solutions, we formulate a specific criterion in the precise space in which we solve~\eqref{eq:KSmild}.
\begin{proposition}\label{prop:explosion-criterion}
	Let $u_{0}\in L^1\cap L^\infty$. There exist $T>0$ and a unique mild solution $u$ to \eqref{eq:KSmild} on $[0,T]$. In particular, we recall that this entails 
	\begin{equation*}
	u\in \CC([0,T];L^1) \cap L^\infty([0,T];L^{\infty}).
	\end{equation*}
	We can thus construct the unique maximal solution $u$ to \eqref{eq:KSmild} and we denote by $T^*$ its existence time. Moreover, either $T^* = +\infty$ and the solution is global, or $T^* <\infty$ and in this case 
	\begin{equation}\label{KS_explosion_criterion}
	\sup_{t \in[0,T]} \Vert u_t \Vert_{L^1} + \sup_{t \in [0,T]} \Vert u_t \Vert_{L^{\infty}} \underset{T \to T^*}{\longrightarrow} + \infty.
	\end{equation}
\end{proposition}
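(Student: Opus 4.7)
The plan is to construct the local mild solution by Banach's fixed-point theorem on the Banach space $X_T := \{u:[0,T]\to L^1 \cap L^\infty \text{ measurable}, \ \|u\|_{X_T} := \sup_{t\in[0,T]}\|u_t\|_{L^1 \cap L^\infty} < \infty\}$, and to deduce the blow-up criterion from the fact that the local existence time in this construction depends on the initial condition only through its $L^1 \cap L^\infty$ norm. With $R := 2\|u_0\|_{L^1 \cap L^\infty}$, I would consider the map $\Phi$ defined by the right-hand side of \eqref{eq:KSmild} on the closed ball $\overline{B}_R \subset X_T$ and show that for $T>0$ small enough it leaves $\overline{B}_R$ invariant and is a strict contraction.

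For the \emph{a priori} bounds, the sub-Markov property of the heat semigroup on $L^p$ yields $\|e^{t\Delta}u_0\|_{X_T} \leq R/2$. For the drift, the gradient bound $\|\nabla g_{2(t-s)}\|_{L^1} \lesssim (t-s)^{-1/2}$ coming from \eqref{eq:heat-estimate}, Young's inequality, and the uniform control $\|K\ast u\|_{L^\infty} \lesssim \|u\|_{L^1 \cap L^\infty}$ from \eqref{Bound_K_convol_L_infty} give a contribution of order $C\chi T^{1/2} \|u\|_{X_T}^2$, since $\|u_s(K\ast u_s)\|_{L^p} \leq \|u_s\|_{L^p}\|K\ast u_s\|_{L^\infty} \lesssim \|u_s\|_{L^1\cap L^\infty}^2$ for $p \in \{1,\infty\}$. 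Using $\|u^2\|_{L^p} \leq \|u\|_{L^\infty}\|u\|_{L^p}$ and the $L^p$-contractivity of the heat semigroup, the logistic term is controlled by $CT(\nu + \mu\|u\|_{X_T})\|u\|_{X_T}$. Hence $\|\Phi(u)\|_{X_T} \leq R/2 + C(T^{1/2}+T)(R+R^2)$, which is at most $R$ for $T$ small enough. The contraction estimate then follows from the bilinear decomposition $u(K\ast u) - v(K\ast v) = (u-v)(K\ast u) + v(K\ast(u-v))$ together with $u^2 - v^2 = (u+v)(u-v)$, producing a Lipschitz constant of order $(T^{1/2}+T)(R+1)$ which can be made strictly less than $1$. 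Continuity in time with values in $L^1$ is then inherited from the strong continuity of $t \mapsto e^{t\Delta}$ on $L^1$ and from dominated convergence applied to the time integrals, and uniqueness on $[0,T]$ follows from the same contraction bound applied between two putative solutions.

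For the blow-up criterion, I would define $T^*$ as the supremum of times $T$ for which a mild solution exists on $[0,T]$, and construct the maximal solution on $[0,T^*)$ by iterated pasting of the local solutions just built. The key observation is that the local existence time $\tau(R_0)$ in the previous step depends on the initial condition only through $R_0 \simeq \|u_0\|_{L^1 \cap L^\infty}$ and on the parameters $\chi,\mu,\nu$. Assume for contradiction that $T^*<\infty$ while $M := \sup_{t < T^*} \|u_t\|_{L^1 \cap L^\infty} < \infty$; then picking $t_0 \in (T^* - \tau(2M),T^*)$ and restarting \eqref{eq:KSmild} from $u_{t_0}$ produces a mild solution on $[t_0, t_0 + \tau(2M)]$, which glues to the original one and extends the domain strictly past $T^*$, contradicting the definition of $T^*$. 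The main delicate point, rather than any single step of the fixed-point argument, is to keep the whole construction uniform in the initial data in the sense that only $\|u_0\|_{L^1 \cap L^\infty}$ enters the estimates, so that this same norm is precisely the one governing the divergence in \eqref{KS_explosion_criterion}.
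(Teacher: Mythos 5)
Your proposal is correct and follows essentially the same strategy as the paper: a Banach fixed-point argument on a ball in $L^1\cap L^\infty$-valued trajectories, with the local existence time depending only on $\|u_0\|_{L^1\cap L^\infty}$, followed by gluing and a contradiction argument that restarts near $T^*$ to derive the blow-up criterion. The only cosmetic difference is that the paper performs the contraction directly in $\mathcal{C}([0,T];L^1)\cap L^\infty([0,T];L^\infty)$ (so continuity in $L^1$ is built into the function space), while you recover the $\mathcal{C}([0,T];L^1)$ regularity after the fact; both are valid.
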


Second, we will prove that in the regime where the local death rate of particles is sufficiently strong, the maximal solution of \eqref{eq:KSmild} is bounded on any time interval.
\begin{proposition}\label{prop:estimates-maxsol}
Let $\mu > \frac{d-2}{d} \chi$. We have the following estimates on the maximal mild solution $u$ to \eqref{eq:KS_EDP}. 
\begin{enumerate}[label=(\roman*)]
	\item For any $t \in [0,T^*)$, $u_t$ is non-negative almost everywhere. 
	
	\item\label{item:locbound} For any $p \in [1,+\infty]$, there exists a locally-bounded, non-decreasing function ${T \in \R_+ \mapsto F(p,T)} \in \R_+^*$ such that for any $T < T^*$, 
\begin{equation}\label{KS_a_priori_bound_L_gamma}
\sup_{t\in [0,T]} \Vert u_t \Vert_{L^p} \leq F(p,T).
\end{equation}

\end{enumerate}	
\end{proposition}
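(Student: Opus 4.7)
My plan follows the strategy of Tello and Winkler~\cite{TelloWinkler}, adapted to the whole space and to mild solutions. I would first regularize the PDE, for instance by replacing $K$ by a smooth convolution approximation $K^\varepsilon$, to produce classical solutions $u^\varepsilon$ on which all the formal computations below are rigorous, then establish the bounds uniformly in $\varepsilon$, and finally transfer them to $u$ by passing to the limit and invoking local uniqueness from Proposition~\ref{prop:explosion-criterion}. For (i), using $\nabla \cdot K = -\delta_0$, the mild equation corresponds formally to the strong form
$$\partial_t u = \Delta u - \chi (K \ast u)\cdot \nabla u + \nu u + (\chi - \mu) u^2,$$
which, at the regularized level, reads $\partial_t u^\varepsilon - \Delta u^\varepsilon + b^\varepsilon \cdot \nabla u^\varepsilon = c^\varepsilon\, u^\varepsilon$ with smooth bounded coefficients $b^\varepsilon = \chi K^\varepsilon \ast u^\varepsilon$ and $c^\varepsilon = \nu + (\chi - \mu) u^\varepsilon$. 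The standard weak maximum principle (applied, for example, to $(u^\varepsilon)^- $) then yields $u^\varepsilon \ge 0$ whenever $u_0 \ge 0$, and non-negativity of $u$ follows in the limit.

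For the $L^p$ bounds, the $L^1$ estimate is immediate: integrating the equation gives $\frac{d}{dt}\|u_t\|_{L^1} = \nu\|u_t\|_{L^1} - \mu\|u_t\|_{L^2}^2 \le \nu\|u_t\|_{L^1}$, hence $\|u_t\|_{L^1} \le \|u_0\|_{L^1}\, e^{\nu t}$. For $p > 1$, I would multiply the (regularized) equation by $p u^{p-1}$, integrate by parts twice in the drift term, and use $\nabla \cdot K = -\delta_0$ to produce the key identity
$$\frac{d}{dt}\int u^p + \frac{4(p-1)}{p}\int |\nabla u^{p/2}|^2 = \bigl[\chi(p-1) - p\mu\bigr] \int u^{p+1} + p\nu \int u^p.$$
The assumption $\mu > \frac{d-2}{d}\chi$ is exactly what permits one to choose $p_0 > d/2$ with $\chi(p_0-1) - p_0 \mu < 0$: the inequality $\mu > \chi(p-1)/p$ holds with $(p-1)/p$ increasing in $p$ and equal to $(d-2)/d$ at $p = d/2$, so some $p_0$ slightly above $d/2$ works. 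With $\delta \coloneqq p_0\mu - (p_0-1)\chi > 0$, absorbing $p_0 \nu \int u^{p_0}$ via the elementary interpolation $\int u^{p_0} \le \eta \int u^{p_0+1} + C_\eta \|u\|_{L^1}$ produces a differential inequality
$$\frac{d}{dt}\int u^{p_0} + \tfrac{\delta}{2}\int u^{p_0+1} \le C_T,$$
and hence a uniform $L^{p_0}$ bound on any $[0,T]$ with $T < T^*$.

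From $\|u_t\|_{L^{p_0}} \le F(p_0,T)$ for some $p_0 > d/2$, I would bootstrap to $L^\infty$ via the mild formula~\eqref{eq:KSmild} and the smoothing estimates~\eqref{eq:Besov-heat-estimate}. Hardy--Littlewood--Sobolev-type bounds on $\|K \ast u_s\|_{L^q}$ and the fact that $\|u_s(K\ast u_s)\|_{L^q} \le \|u_s\|_{L^q}\|K\ast u_s\|_{L^\infty}$ allow a finite chain of exponents $p_0 < p_1 < \dots < p_n$, using the heat-kernel estimate $\|\nabla e^{(t-s)\Delta}\|_{L^{p_k} \to L^{p_{k+1}}} \lesssim (t-s)^{-1/2 - \frac d 2 (1/p_k - 1/p_{k+1})}$ at each step, until $p_n > d$; at that point Proposition~\ref{KS_properties_kernel}\ref{item:infboundK} yields $\|K \ast u_s\|_{L^\infty} \le C$, and a final application of~\eqref{eq:KSmild} closes the $L^\infty$ estimate. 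Interpolation with the $L^1$ bound then covers every $p \in [1,+\infty]$. The main technical obstacle is the rigorous justification of the $L^p$ energy identity: the cancellation $\chi(p-1)\int u^{p+1}$ comes from the distributional identity $\nabla\cdot K = -\delta_0$, which is only formal at the level of mild solutions and requires a careful regularization producing uniform-in-$\varepsilon$ estimates. The delicate interplay between the algebraic condition $\chi(p-1) < p\mu$ and the Sobolev threshold $p > d/2$ needed for the bootstrap is precisely what the sharp hypothesis $\mu > \frac{d-2}{d}\chi$ encodes.
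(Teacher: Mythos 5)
Your proposal captures the key structural ingredients of the paper's proof: the $L^p$ energy identity generated by $\nabla\cdot K = -\delta_0$ (your identity and the coefficient $\chi(p-1)-p\mu$ match the paper's exactly), the observation that $\mu > \frac{d-2}{d}\chi$ permits a choice $p_0>d/2$ with $(p_0-1)\chi - p_0\mu<0$, non-negativity as a preliminary, an $L^1$ estimate via integrating the equation, and a bootstrap from $L^{p_0}$ to $L^\infty$. The overall architecture is the same. However, the execution differs in two respects that are worth flagging, one essentially a matter of convenience, the other a genuine gap in your sketch as written.

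\emph{Regularization.} You regularize the kernel, $K\mapsto K^\varepsilon$, and rely on classical well-posedness and a maximum principle for the $\varepsilon$-problem. This commits you to three nontrivial side-tasks: well-posedness of the regularized PDE, a maximum principle on all of $\R^d$ (which on an unbounded domain needs a growth restriction or a Phragm\'en--Lindel\"of argument, not just the ``standard'' one), and convergence $u^\varepsilon\to u$. The paper instead mollifies \emph{the known mild solution itself}: it shows $u$ is a weak solution by differentiating the mild formula against $\varphi\in\CC^\infty_c$, sets $u^{(\delta)}\coloneqq u\ast\rho_\delta$, applies the chain rule to $u^{(\delta)}$ (justifying the integration by parts via a cutoff $\zeta_\eps$ when $p\in(1,2)$), and passes $\delta\to 0$ by Vitali convergence. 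No auxiliary well-posedness question arises. Non-negativity is handled the same way, by the chain rule with $x\mapsto (x^-)^2$ and Gr\"onwall, rather than a maximum principle.

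\emph{Bootstrap.} After obtaining the $L^{p_0}$ bound, the paper inserts a Gagliardo--Nirenberg step: it bounds $\int (u^{(\delta)})^{p+1}$ by a sublinear power of $\int (u^{(\delta)})^{p-2}|\nabla u^{(\delta)}|^2$ plus a power of $\|u^{(\delta)}\|_{L^p}^p$ (the exponent $\tfrac{p+1}{p}a<1$ precisely because $p_0>d/2$), reinjects this into the energy identity, and thereby gets the $L^p$ bound \emph{for all finite $p$}. Only then does it invoke the mild formula once, using $H^\alpha_p\hookrightarrow L^\infty$ and the fact that $u\in L^r$ for some $r>d$, so $K\ast u\in L^\infty$ by Proposition~\ref{KS_properties_kernel}\ref{item:infboundK}. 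You skip the Gagliardo--Nirenberg step and propose a mild-formula chain $L^{p_0}\to L^{p_1}\to\cdots\to L^{p_n}$ directly. This can in principle be made to close, but the specific inequality you invoke, $\|u_s(K\ast u_s)\|_{L^q}\le\|u_s\|_{L^q}\|K\ast u_s\|_{L^\infty}$, is \emph{circular} at the intermediate stages where $p_k\le d$: it presupposes $K\ast u_s\in L^\infty$, which by Proposition~\ref{KS_properties_kernel} requires $u\in L^{p'}$ for some $p'>d$ --- exactly what the chain is trying to reach. At those stages you must instead bound $K\ast u_s$ in $L^{q_k}$ for finite $q_k$ via HLS and split $u_s\,(K\ast u_s)$ with H\"older between two finite exponents, carefully tracking that the reciprocal exponent drops at each step. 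That is where this kind of bootstrap succeeds or fails, and your sketch does not actually check it. The paper's Gagliardo--Nirenberg detour avoids this bookkeeping entirely, which is its main advantage.

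Finally, a smaller point: your direct derivation $\tfrac{d}{dt}\|u_t\|_{L^1}=\nu\|u_t\|_{L^1}-\mu\|u_t\|_{L^2}^2$ presupposes that the transport term integrates to zero, which on $\R^d$ needs justification (the paper tests against a sequence $\varphi_n\uparrow 1$ with uniformly bounded derivatives and passes to the limit).
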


\begin{proof}[Proof of Theorem~\ref{Thm_WP_KS}]
In view of Proposition~\ref{prop:estimates-maxsol}\ref{item:locbound}, it is now clear that in the regime $\mu > \frac{d-2}{d} \chi$, the blow-up described in Equation~\eqref{KS_explosion_criterion} cannot happen. Thus in this regime, the Keller--Segel equation admits a unique global solution.
\end{proof}

\medskip

In the rest of this section, we prove successively Proposition~\ref{prop:explosion-criterion} and Proposition~\ref{prop:estimates-maxsol}.

\begin{proof}[Proof of Proposition~\ref{prop:explosion-criterion}]
	Let $T>0$ to be chosen later and denote $E_T\coloneqq \CC([0,T];L^1) \cap L^\infty([0,T];L^{\infty})$ which is a Banach space with respect to the norm $\Vert \cdot \Vert_{T}$ defined, for $f \in E_T$, by
	\begin{equation*}
	\Vert f \Vert_T \coloneqq \sup_{t \in[0,T]} \Vert f_t \Vert_{L^1} + \sup_{t \in [0,T]} \Vert f_t \Vert_{L^{\infty}}.
	\end{equation*}
	 The solution of the mild equation \eqref{eq:KSmild} can be rewritten as $u = e^{\cdot \Delta }u_0 - B(u,u) +  L(u)$, where the bilinear functional $B$ is given by
	  \begin{equation*}
	  B : (u,v) \in E_T \times E_T \mapsto \left( t \in [0,T] \mapsto  \chi \int_0^t \nabla \cdot e^{(t-s)\Delta} ((K\ast u_s)v_s) \, \dd s + \mu \int_0^t e^{(t-s)\Delta}  (u_s v_{s}) \, \dd s \right)
	  \end{equation*}
	  and the linear functional $L$ is given by
	  \begin{equation*}
	  L : u \in E_T \mapsto \left( t \in [0,T] \mapsto   \nu\int_0^t e^{(t-s)\Delta} u_s \, \dd s \right).
	  \end{equation*}
	  Let us prove the continuity of $B$ and $L$ in $E_{T}$. For $ u \in E_T$, using \eqref{eq:heat-estimate} with $n=0$, it comes
	  \begin{equation*}
	  \Vert L(u)_{t}\Vert_{L^1} \leq C t \sup_{s\leq t} \Vert u_{s} \Vert_{L^1} ~~\text{and}~~ \Vert L(u)_{t}\Vert_{L^\infty} \leq C t \sup_{s\leq t} \Vert u_{s} \Vert_{L^\infty},
	  \end{equation*}
	  so that
	\begin{align}\label{KS_continuity_linear}
	 	\Vert L(u)\Vert_{T} \leq C T\Vert u \Vert_{T},
	 \end{align} 
 where $C$ does not depend on $T$. 

 For $u,v \in E_T$, using \eqref{eq:heat-estimate} with $n=0$ and $n=1$ and \eqref{Bound_K_convol_L_infty} with $r=+\infty$, we get similarly that there exists a positive constant $C$ independent of $T$ such that for $p\in \{1,+\infty\}$,
 \begin{align*}
 \Vert B(u,v)_{t}\Vert_{L^p} \leq C (\sqrt{t}+t)\, \sup_{s\leq t} \big(\Vert u_{s} \Vert_{L^1\cap L^\infty} \Vert v_{s} \Vert_{L^p}\big);
 \end{align*}
hence this yields 
 \begin{equation}\label{KS_continuity_bilin}
 \Vert B(u,v) \Vert_{T}  \leq C (\sqrt{T}+T) \Vert u\Vert_T \Vert v \Vert_T.
\end{equation} 
Denoting $F(u) \coloneqq e^{\cdot \Delta} u_{0} - B(u,u) +  L(u)$, it follows from \eqref{KS_continuity_linear} and \eqref{KS_continuity_bilin} that for any $u,v \in E_T$,
\begin{align}\label{KS_contraction_existence}
\Vert F(u) - F(v) \Vert_T &\leq \Vert L(u-v) \Vert_T +\Vert B(u,u-v) \Vert_T +\Vert B(u-v,v) \Vert_T  \nonumber\\
&\leq C \left((\sqrt{T}+T) (\Vert u \Vert_T + \Vert v \Vert_T)  +  T \right) \Vert u-v\Vert_T.
\end{align}

Define $R = \Vert u_0 \Vert_{L^1} + \Vert u_0 \Vert_{L^{\infty}} +1$. We show now that if $T$ is small enough, the closed ball $\overline{B}(0,R)$ is stable by $F$ and that $F$ is a contraction on $\overline{B}(0,R)$. Using \eqref{KS_contraction_existence}, we see that $ \lVert F(u) - F(v)\rVert_{T} \leq \lVert u-v\rVert_{T}$ if 
$$ C(\sqrt{T} +T)R^2 + C TR \leq 1.$$ 
This is the case for example for 
\begin{equation}\label{KS_time_local_existence} 
T \coloneqq \min \left(\frac{1}{(3CR^2)^2}, \frac{1}{3CR^2}, \frac{1}{3CR}\right).\end{equation}
Dividing $T$ by $2$ if necessary, we see that $F$ is a contraction on $\overline{B}(0,R)$. By a classical fixed-point argument, there exists a unique mild solution in $\overline{B}(0,R)$.

\medskip

Let us prove the local uniqueness. Let $u$ and $v$ be two mild solutions on $[0,T]$. Then, for $t$ small enough, the restrictions of $u$ and $v$ to $[0,t]$ belong to $\overline{B}(0,R)$ 
 and thus coincide, since $F$ is a contraction on $\overline{B}(0,R)$. Let us consider $I = \{ t \in [0,T]: \, u_{s} = v_{s} \text{ for all } s\leq t \},$ which is not empty. By initialising the mild formulation at $t \in I$, we see with the same argument as before that $I$ is open. The fact that $I$ is closed follows from the continuity in time of $u$ and $v$. By connectedness, $I = [0,T]$. Hence, gluing local solutions 
 and using the local uniqueness, we can build a unique maximal mild solution $u$ defined on the maximal time interval $[0,T^*)$.

\medskip

Let us now focus on the explosion criterion \eqref{KS_explosion_criterion}. Assume by contradiction that $T^*$ is finite and that $\sup_{t \in [0,T^*)} \Vert u_t \Vert_{L^1} +  \sup_{t \in [0,T^*)} \Vert u_t \Vert_{L^{\infty}} < + \infty$. Then, we see that the local existence time given in \eqref{KS_time_local_existence} for the mild equation \eqref{eq:KSmild} starting at time $t \in [0,T^*)$ by the solution $u_t$ can be lower-bounded uniformly in $t \in [0,T^*)$. We can thus build a solution defined on a time interval $[0,T]$ with $T> T^*$. This is a contradiction. 
\end{proof}

The last ingredient to complete the proof of Theorem~\ref{Thm_WP_KS} consists in the proof of Proposition~\ref{prop:estimates-maxsol}. This is an adaptation to the whole space  and to solutions with lower regularity of the proof proposed initially by Tello and Winkler~\cite{TelloWinkler}.

\begin{proof}[Proof of Proposition~\ref{prop:estimates-maxsol}]
	\textbf{Step 1: Non-negativity of the solution.} Let $T^*$ be the maximal existence time given by Proposition~\ref{prop:explosion-criterion}, and let $T < T^*$. We first prove that the mild solution $u$ on $[0,T]$ is a weak solution to \eqref{eq:KS_EDP}. For a fixed test function $\varphi \in \CC_c^{\infty}(\R^d)$, using Fubini's theorem and the integration-by-parts formula, one gets
	\begin{equation*}
	\langle u_t, \varphi \rangle = \langle u_0, e^{t\Delta}\varphi \rangle + \chi \int_0^t \langle  (K\ast u_s)u_s, e^{(t-s)\Delta}\nabla \varphi \rangle \, \dd s + \int_0^t \langle  \nu u_s - \mu u_s^2, e^{(t-s)\Delta} \varphi \rangle \, \dd s.	
	\end{equation*}
	Recall from Proposition~\ref{prop:explosion-criterion} that $u \in \CC([0,T];L^1) \cap L^\infty([0,T]; L^{\infty})$. Thus using that $\varphi \in \CC_c^{\infty}(\R^d)$, we deduce that for any $t \in [0,T]$,
	\begin{align*}
		\frac{\dd}{\dd t} \langle u_t, \varphi \rangle 
		&=  \langle u_0, \Delta e^{t\Delta}\varphi \rangle + \chi \langle (K\ast u_t)u_t, \nabla \varphi \rangle + \chi \int_0^t \langle  (K\ast u_s)u_s, e^{(t-s)\Delta}\nabla (\Delta \varphi) \rangle \, \dd s \\ 
		&\quad+ \langle \nu u_t - \mu u_t^2,\varphi \rangle + \int_0^t \langle  \nu u_s - \mu u_s^2,   e^{(t-s)\Delta} \Delta \varphi \rangle \, \dd s \\ 
		&=  \langle e^{t\Delta} u_0, \Delta \varphi \rangle + \chi \langle (K\ast u_t)u_t, \nabla \varphi \rangle - \chi \int_0^t \langle  \nabla \cdot e^{(t-s)\Delta}((K\ast u_s)u_s), \Delta  \varphi \rangle \, \dd s \\ 
		&\quad+ \langle \nu u_t - \mu u_t^2,\varphi \rangle + \int_0^t \langle e^{(t-s)\Delta} (\nu u_s - \mu u_s^2), \Delta  \varphi \rangle \, \dd s \\ 
		&= \langle u_t, \Delta \varphi \rangle + \chi \langle (K\ast u_t)u_t, \nabla \varphi \rangle + \langle \nu u_t - \mu u_t^2,\varphi \rangle.
	\end{align*}
	This yields, for all $t \in [0,T]$,
	 \begin{equation}\label{KS_weak_solution_proof}
		\langle u_t ,\varphi \rangle = \langle u_0, \varphi\rangle +  \int_0^t \langle u_s, \Delta \varphi \rangle \, \dd s + \chi \int_0^t \langle (K\ast u_s)u_s, \nabla \varphi \rangle \, \dd s+ \int_0^t \langle \nu u_s - \mu u_s^2,\varphi \rangle \, \dd s,
	\end{equation}
	which is precisely the definition of a weak solution to \eqref{eq:KS_EDP}. We introduce a mollifier $\rho \in \mathcal{C}^\infty_{c}$, $\rho\geq 0$, and the mollifying sequence $(\rho_{\delta})_{\delta>0}$ given by $\rho_{\delta} = \delta^{-d} \rho(\delta^{-1} \cdot)$; and set $u^{(\delta)}_t \coloneqq u_t \ast  \rho_{\delta}$. It follows from \eqref{KS_weak_solution_proof} that for any $t \in [0,T]$,
\begin{equation}\label{KS_molified_mild_solution}
u^{(\delta)}_t  = u^{(\delta)}_0  + \int_0^t \Delta u^{(\delta)}_s \, \dd s - \chi \int_0^t \nabla \cdot ([u_s(K\ast u_s)]\ast \rho_{\delta}) \, \dd s + \nu \int_0^t u^{(\delta)}_s \, \dd s - \mu \int_0^t u_s^2 \ast  \rho_{\delta} \, \dd s.
\end{equation}
Applying the chain rule to the $\CC^1$ function $x \in \R \mapsto (x^-)^2$, we get that for any $t \in [0,T]$,
	\begin{equation*}
		([u^{(\delta)}_t]^-)^2  =  2\int_0^t [u^{(\delta)}_s]^- \left(\Delta u^{(\delta)}_s  - \chi \nabla \cdot ([u_s(K\ast u_s)]\ast \rho_{\delta}) + \nu  u^{(\delta)}_s - \mu  u_s^2 \ast  \rho_{\delta} \right)\, \dd s.
	\end{equation*}
Integrating over $\R^d$ and using the integration-by-parts formula yields 
\begin{align}\label{KS_positivity_eq1}
	\big\Vert[u^{(\delta)}_t]^-\big\Vert_{L^2}^2  \notag
	&= - 2 \int_0^t \int_{\R^d} |\nabla [u^{(\delta)}_s]^-|^2 \, \dd x \,\dd s \\\notag &\quad +  2\int_0^t \int_{\R^d} [u^{(\delta)}_s]^- \left(  - \chi \nabla \cdot ([u_s(K\ast u_s)]\ast \rho_{\delta}) + \nu  u^{(\delta)}_s - \mu  u_s^2 \ast  \rho_{\delta} \right)\, \dd x \, \dd s. \\\notag 
	&\leq  - 2 \int_0^t \int_{\R^d} |\nabla [u^{(\delta)}_s]^-|^2 \, \dd x \,\dd s +  2\int_0^t \int_{\R^d} [u^{(\delta)}_s]^- \left(  - \chi \nabla \cdot ([u_s(K\ast u_s)]\ast \rho_{\delta}) + \nu  u^{(\delta)}_s  \right)\, \dd x \, \dd s \\\notag 
	&= - 2 \int_0^t \int_{\R^d} |\nabla [u^{(\delta)}_s]^-|^2 \, \dd x \,\dd s +	2\chi \int_0^t \int_{\R^d} \nabla [u^{(\delta)}_s]^- \cdot ([u_s(K\ast u_s)]\ast \rho_{\delta}) \,\dd x \, \dd s \\ 	&\quad+ 2\nu \int_0^t \Vert [u^{(\delta)}_s]^- \Vert_{L^2}^2\, \dd s.
 \end{align}
The second term in the right-hand side of the previous equality reads
\begin{align*}
		2\chi &\int_0^t \int_{\R^d} \nabla [u^{(\delta)}_s]^- \cdot ([u_s(K\ast u_s)]\ast \rho_{\delta}) \,\dd x \, \dd s \\ 
		&=  2\chi \int_0^t \int_{\R^d} \nabla [u^{(\delta)}_s]^- \cdot (u^{(\delta)}_s(K\ast u^{(\delta)}_s))\,\dd x \, \dd s \\ 
		&\quad + 	2\chi \int_0^t \int_{\R^d} \nabla [u^{(\delta)}_s]^- \cdot ([u_s(K\ast u_s)]\ast \rho_{\delta} - u^{(\delta)}_s(K\ast u^{(\delta)}_s) ) \,\dd x \, \dd s \\ 
		&=  \chi \int_0^t \int_{\R^d} \nabla |[u^{(\delta)}_s]^-|^2 \cdot (K\ast u^{(\delta)}_s)\,\dd x \, \dd s \\ 
		&\quad + 	2\chi \int_0^t \int_{\R^d} \nabla [u^{(\delta)}_s]^- \cdot ([u_s(K\ast u_s)]\ast \rho_{\delta} - u^{(\delta)}_s(K\ast u^{(\delta)}_s) ) \,\dd x \, \dd s \\ 
		&\eqqcolon I_1 + I_2.
\end{align*}
By integration-by-parts, \eqref{eq:divK} and the fact that $u \in L^\infty([0,T];L^{\infty})$, we have  
\begin{align}\label{KS_positivity_eq2} 
I_1 \notag&= \chi \int_0^t \int_{\R^d}  |[u^{(\delta)}_s]^-|^2 u^{(\delta)}_s\,\dd x \, \dd s\\ 
&\leq \chi \sup_{s \in [0,T]} \Vert u_s \Vert_{L^{\infty}}  \int_0^t \big\Vert[u^{(\delta)}_s]^-\big\Vert_{L^2}^2 \, \dd s,
\end{align}
using the simple convolution inequality $ \lVert u^{(\delta)}_s \rVert_{L^\infty} \leq  \lVert \rho^{(\delta)} \rVert_{L^1}  \lVert u_s \rVert_{L^\infty} \leq \sup_{s\leq T} \lVert u_s \rVert_{L^\infty}$.

Concerning $I_2$, Cauchy-Schwarz's inequality ensures that \begin{align*}
	I_2 \leq  2\chi \left(\int_0^t \big\Vert \nabla [u^{(\delta)}_s]^- \big\Vert_{L^2}^2 \, \dd s\right)^{\frac12} \left(\int_0^t  \big\Vert [u_s(K\ast u_s)]\ast \rho_{\delta} - u^{(\delta)}_s(K\ast u^{(\delta)}_s) \big\Vert_{L^2}^2 \,\dd s \right)^{\frac12}.
\end{align*}
Since $T< T^*$ and $u \in \CC([0,T];L^1) \cap L^\infty([0,T];L^{\infty})$, we deduce from \eqref{KS_properties_kernel_L_infty} that $s \in [0,T] \mapsto K\ast u_s \in L^{\infty}$ is bounded and that for any $s \in [0,T]$, 
\begin{align*}
&\big\Vert [u_s(K\ast u_s)]\ast \rho_{\delta} - u^{(\delta)}_s(K\ast u^{(\delta)}_s) \big\Vert_{L^2}\\
&\quad \leq \big\Vert [u_s(K\ast u_s)]\ast \rho_{\delta} - u_s(K\ast u_s) \big\Vert_{L^2} +  \big\Vert u_s(K\ast u_s) - u^{(\delta)}_s (K\ast u^{(\delta)}_s) \big\Vert_{L^2} \underset{\delta\to 0}{\longrightarrow} 0.
\end{align*}
It follows that for any $\eps >0$, there exists $\Delta_{\eps} >0$ such that for any $\delta < \Delta_{\eps}$ and $t \in [0,T]$,
\begin{equation}\label{KS_positivity_eq3}
	I_2 \leq  \eps \left(\int_0^t \Vert \nabla [u^{(\delta)}_s]^- \Vert_{L^2}^2 \, \dd s\right)^{\frac12}.
\end{equation}
Gathering \eqref{KS_positivity_eq1}, \eqref{KS_positivity_eq2} and \eqref{KS_positivity_eq3}, we obtain that for any $\eps >0$, $\delta < \Delta_{\eps}$ and $t \in [0,T]$,
\begin{align*}
	\big\Vert[u^{(\delta)}_t]^-\big\Vert_{L^2}^2 \notag 
	&\leq - 2 \int_0^t \big\Vert \nabla [u^{(\delta)}_s]^-\big\Vert_{L^2}^2\,\dd s +\eps  \left(\int_0^t \big\Vert \nabla [u^{(\delta)}_s]^-\big\Vert_{L^2}^2\,\dd s \right)^{\frac12} \\ 
	&\quad+ (2\nu + \chi \sup_{s \in [0,T]} \Vert u_s \Vert_{L^{\infty}}) \int_0^t \big\Vert [u^{(\delta)}_s]^- \big\Vert_{L^2}^2\, \dd s.
\end{align*}
Using that the map $x \in \R_+ \mapsto \eps \sqrt{x} - 2 x$ is bounded by $\frac{\eps^2}{8}$, we have that for any $\eps >0$, $\delta < \Delta_{\eps}$ and $t \in [0,T]$,
\begin{align*}
	\big\Vert[u^{(\delta)}_t]^-\big\Vert_{L^2}^2 \notag 
	&\leq \frac{\eps^2}{8}+ (2\nu + \chi \sup_{s \in [0,T]} \Vert u_s \Vert_{L^{\infty}}) \int_0^t \big\Vert [u^{(\delta)}_s]^- \big\Vert_{L^2}^2
	\, \dd s.
\end{align*} 
Grönwall's inequality now gives the existence of a positive constant $C$ such that for any $\eps >0$, $\delta < \Delta_{\eps}$ and $t \in [0,T]$,
\begin{align*}
	\big\Vert[u^{(\delta)}_t]^-\big\Vert_{L^2}^2 \notag &\leq C\, \eps^2.
\end{align*}
By Fatou's lemma and taking the limit $\eps \to 0$, we deduce that $\big\Vert[u_t]^-\big\Vert_{L^2} =0$ for any $t \in [0,T]$, which yields the desired result.

	\bigskip

	\textbf{Step 2: Proof of \eqref{KS_a_priori_bound_L_gamma} for $p =1$.}  
	Let $(\varphi_{n})_{n\in \N}$ be a non-decreasing sequence in $\CC_{c}^\infty(\R^d)$ that converges pointwise to $1$, which satisfies the following properties:  for each $n\in \N$, $\varphi_{n}(x) = 1$ for any $x\in [-n,n]$; $\sup_{n} \lVert \nabla \varphi_{n}\rVert_{L^\infty}<\infty$; and $\sup_{n} \lVert \Delta \varphi_{n}\rVert_{L^\infty}<\infty$. For $u \in \CC([0,T];L^1) \cap L^\infty([0,T]; L^{\infty})$, by dominated convergence theorem,
	\begin{align*}
	\int_{0}^t \langle u_{s}, \Delta \varphi_{n} \rangle \, \dd s \underset{n\to\infty}\longrightarrow 0, \quad 
	\int_{0}^t \langle (K\ast u_{s}) u_{s}, \nabla \varphi_{n} \rangle \, \dd s \underset{n\to\infty}\longrightarrow 0 \\
	\text{ and } \int_{0}^t \langle \nu u_{s} - \mu u_{s}^2, \varphi_{n} \rangle \, \dd s \underset{n\to\infty}\longrightarrow \int_{0}^t \langle \nu u_{s} - \mu u_{s}^2, 1 \rangle \, \dd s.
	\end{align*}
	Hence testing $u$ against $\varphi_{n}$ in \eqref{KS_weak_solution_proof}, using the non-negativity of $u_t$ for any $t \in [0,T^*)$, and letting $n \to +\infty$ as in the previous equation, one obtains 
	\begin{equation*}
	\Vert u_t \Vert_{L^1} \leq  \Vert u_0 \Vert_{L^1} + \nu \int_0^t\Vert u_s \Vert_{L^1} \, \dd s.
	\end{equation*}
	Thus Grönwall's inequality yields \eqref{KS_a_priori_bound_L_gamma} for $p =1$ with $F(1,T) = \lVert u_{0}\rVert_{L^1} e^{\nu T}$.
	
	\bigskip

\textbf{Step 3: Proof of \eqref{KS_a_priori_bound_L_gamma} for $p \in \big(1,\frac{\chi}{(\chi - \mu)^+}\big)$.} Let us fix $T< T^*$ and $p \in (1,+\infty)$, the restriction on $p$ will be assumed later. For any $\delta>0$, Step $1$ implies that $u^{(\delta)}_t\geq 0$. Hence 
by \eqref{KS_molified_mild_solution} and the chain rule, we obtain that for any $t\in[0,T]$, 
\begin{align*}
	(u^{(\delta)}_t)^p &= (u^{(\delta)}_0)^p + p \int_0^t (u^{(\delta)}_s)^{p -1} \Delta u^{(\delta)}_s \, \dd s - p \chi \int_0^t (u^{(\delta)}_s)^{p -1} \nabla \cdot ([u_s(K\ast u_s)]\ast \rho_{\delta}) \, \dd s \\ 
	& \quad + p \int_0^t (u^{(\delta)}_{s})^{p -1} (\nu u^{(\delta)}_s - \mu u_s^2 \ast  \rho_{\delta}) \, \dd s.
\end{align*}
We already know that $u$ and $u^{(\delta)}$ are in $L^\infty([0,T];L^1\cap L^\infty)$, so except for the term with the Laplacian, all terms in the previous equality are readily integrable on $\R^d$. It thus follows that $(u^{(\delta)}_s)^{p -1} \Delta u^{(\delta)}_s$ is integrable on $[0,T]\times \R^d$, for any $p\in (1,+\infty)$. 
Hence from Fubini's Theorem, we get that
\begin{align}\label{KS_proof_estimates_PDE_1}
	\Vert u^{(\delta)}_t \Vert^p_{L^p} 
	&= \notag\Vert u^{(\delta)}_0 \Vert^p_{L^p} + p \int_0^t \int_{\R^d} (u^{(\delta)}_s)^{p -1} \Delta u^{(\delta)}_s \, \dd x \, \dd s \\ \notag 
	&\quad  -p \chi \int_0^t \int_{\R^d} (u^{(\delta)}_s)^{p -1} \nabla \cdot ([u_s(K\ast u_s)]\ast \rho_{\delta}) \, \dd x \, \dd s  \\ \notag 
	& \quad + p \int_0^t \int_{\R^d}(u^{(\delta)}_{s})^{p -1} (\nu u^{(\delta)}_s - \mu u_s^2 \ast  \rho_{\delta})\, \dd x \, \dd s \\ 
	&\eqqcolon \Vert u^{(\delta)}_0 \Vert^p_{L^p} + I_1 + I_2 + I_3.
\end{align}

Provided integration-by-parts can be applied on $I_{1}$, it reads
 \begin{equation}\label{KS_proof_estimates_PDE_2}
 I_1 = - p (p -1) \int_0^t \int_{\R^d} (u^{(\delta)}_s)^{p -2} \vert \nabla u^{(\delta)}_s \vert^2 \, \dd x \,\dd s.
 \end{equation}
Let us justify this integration-by-parts when $p \in (1,2)$, which is the most tricky case. Consider a family $(\zeta_\eps)_{\eps>0} \in \CC^{\infty}_c(\R^d)$ of functions with values in $[0,1]$ such that $\zeta_{\eps}=1$ on $B\left(0,\frac{1}{\eps}\right)$ and such that for any $j \in \{1,2\}$, 
$$\sup_{\eps>0} \Vert \nabla^j \zeta_\eps \Vert_{L^{\infty}} <+\infty.$$
Consider 
$$A^{\eps}_t \coloneqq \int_0^t \int_{\R^d} (u^{(\delta)}_s +\eps)^{p -1} \Delta (u^{(\delta)}_s\zeta_\eps) \, \dd x \, \dd s,$$
which is well defined since 
the integrand is bounded and has bounded support thanks to $\zeta_{\eps}$. 
Moreover, $A^\eps_t$ converges towards $I_1$ when $\eps \to 0$ by the dominated convergence theorem. Indeed, for $\eps\in (0,1)$, one has the following uniform-in-$\eps$ bound 
\begin{equation*}
\big| (u^{(\delta)}_s +\eps)^{p -1} \Delta (u^{(\delta)}_s\zeta_\eps)\big| \leq \big((u^{(\delta)}_s)^{p-1}+1\big) \big(|\Delta u^{(\delta)}_s| + u^{(\delta)}_s \sup_{\eps>0} \Vert \nabla^2 \zeta_\eps \Vert_{L^{\infty}}  \big),
\end{equation*}
where the right-hand side is integrable; in particular, we use that for any $q\geq 1$, $\Delta u^{(\delta)} \in L^\infty([0,T];L^q)$, which holds since $ \lVert\Delta u^{(\delta)}_{s} \rVert_{L^q} \leq \lVert \Delta \rho_{\delta} \rVert_{L^1} \lVert u^{(\delta)}_{s} \rVert_{L^q} <\infty$.  \\
Now as the support of $u^{(\delta)}_s \zeta_{\eps}$ is included in the support of $\zeta_\eps$, for any $s \in [0,T]$, the standard integration-by-parts formula gives 
\begin{equation} \label{KS_proof_estimates_PDE_justif_IPP}
A^\eps_t = - (p -1) \int_0^t \int_{\R^d} (u^{(\delta)}_s + \eps)^{p -2} \nabla u^{(\delta)}_s \cdot \left[\zeta_\eps \nabla u^{(\delta)}_s + u^{(\delta)}_s \nabla \zeta_\eps\right] \dd x \,\dd s.
\end{equation}
Using that $(u^{(\delta)}_s)^{p -1} \nabla u^{(\delta)}_s$ is integrable over $\R^d$ (recall that $\nabla \rho_{\delta} \in L^1$, so by convolution inequality, $\nabla u^{(\delta)} \in L^\infty([0,T];L^q)$ for any $q\geq 1$), the dominated convergence theorem yields 
$$\int_0^t \int_{\R^d} (u^{(\delta)}_s + \eps)^{p -2}\, u^{(\delta)}_s\, \nabla u^{(\delta)}_s \cdot \nabla \zeta_\eps \, \dd x \,\dd s \underset{\eps \to 0}{\longrightarrow}0.$$
Thus, thanks to Fatou's lemma applied on \eqref{KS_proof_estimates_PDE_justif_IPP}, we obtain that
\begin{equation*}
(p -1) \int_0^t \int_{\R^d} (u^{(\delta)}_s)^{p -2} \vert \nabla u^{(\delta)}_s \vert^2 \, \dd x\,\dd s \leq \liminf_{\eps \to 0} -A^\eps_t = -\int_0^t \int_{\R^d} (u^{(\delta)}_s)^{p -1} \Delta u^{(\delta)}_s \, \dd x\, \dd s< + \infty.
\end{equation*}
The dominated convergence theorem can thus be applied to \eqref{KS_proof_estimates_PDE_justif_IPP} when $\eps \to 0$ and ensures that \eqref{KS_proof_estimates_PDE_2} holds true. 

Let us now focus on $I_2$. Using again the integration-by-parts formula, which can be justified as in the previous paragraph, we have
\begin{align*}
	I_2 
	&=  p \chi \int_0^t \int_{\R^d} \nabla \big( (u^{(\delta)}_s)^{p -1}\big) \cdot ([u_s(K\ast u_s)]\ast \rho_{\delta} ) \, \dd x \,\dd s \\ 
	&= p \chi \int_0^t \int_{\R^d} \nabla \big( (u^{(\delta)}_s)^{p -1}\big) \cdot  (u^{(\delta)}_s(K\ast u_s)) \, \dd x \,\dd s + R^\delta_t,
\end{align*}
where 
\begin{equation}\label{KS_proof_estimates_PDE_3_bis}
	R^\delta_t \coloneqq  p \chi \int_0^t \int_{\R^d} \nabla \big((u^{(\delta)}_s)^{p -1}\big) \cdot ([u_s(K\ast u_s)]\ast \rho_{\delta}  - u^{(\delta)}_s(K\ast u_s) ) \, \dd x \,\dd s. 
\end{equation}
Then, we have 
\begin{align*}
	I_2 &= p (p -1) \chi \int_0^t \int_{\R^d} (u^{(\delta)}_s)^{p -2} \nabla u^{(\delta)}_s \cdot (u^{(\delta)}_s(K\ast u_s)) \, \dd x \,\dd s + R^\delta_t \\ 
	&=  (p -1) \chi \int_0^t \int_{\R^d}  \nabla \big( (u^{(\delta)}_s)^{p}\big) \cdot (K\ast u_s) \, \dd x \,\dd s + R^\delta_t.
\end{align*}
Recall that $\nabla\cdot K = -\delta_{0}$, see  \eqref{eq:divK}, thus by integration-by-parts we deduce that 
\begin{align}\label{KS_proof_estimates_PDE_3}
	I_2  
	&=(p -1) \chi \int_0^t \int_{\R^d}  (u^{(\delta)}_s)^{p}  u_s \, \dd x \,\dd s + R^\delta_t.
\end{align}

Gathering \eqref{KS_proof_estimates_PDE_1}, \eqref{KS_proof_estimates_PDE_2} and \eqref{KS_proof_estimates_PDE_3}, we get that 
\begin{align}\label{KS_proof_estimates_PDE_4}
	\Vert u^{(\delta)}_t \Vert^p_{L^p} 
	&= \notag\Vert u^{(\delta)}_0 \Vert^p_{L^p} + p \nu \int_0^t \Vert u^{(\delta)}_s\Vert_{L^{p}}^p\, \dd s \\ 
	& \quad+ (p -1) \chi \int_0^t \int_{\R^d}  (u^{(\delta)}_s)^{p}  u_s \, \dd x \,\dd s - p \mu \int_0^t \int_{\R^d}(u^{(\delta)}_s)^{p -1} (u_s^2 \ast  \rho_{\delta})\, \dd x \, \dd s  \\\notag  
	&\quad - p (p -1) \int_0^t \int_{\R^d} (u^{(\delta)}_s)^{p -2} \vert \nabla u^{(\delta)}_s \vert^2 \, \dd x \,\dd s + R^\delta_t.
\end{align}
Applying Cauchy-Schwarz's inequality to the expression \eqref{KS_proof_estimates_PDE_3_bis} of $R^\delta_t$, we get 
\begin{multline*}
	|R^\delta_t| \leq p (p -1) \chi \left(\int_0^t \int_{\R^d}  (u^{(\delta)}_s)^{p -2} \vert \nabla u^{(\delta)}_s \vert^2 \, \dd x \,\dd s\right)^{\frac{1}{2}}  \\ 
	\times\left(\int_0^t \int_{\R^d} (u^{(\delta)}_s)^{p -2} \big|[u_s(K\ast u_s)]\ast \rho_{\delta}  - u^{(\delta)}_s(K\ast u_s) \big|^2 \, \dd x \,\dd s \right)^{\frac{1}{2}}.
\end{multline*}
%
Assume that we can prove that $(u^{(\delta)})^{p/2 -1}  [u(K\ast u)]\ast \rho_{\delta}$ converges to $u^{p/2} (K\ast u)$ in $L^2([0,T]\times \R^d)$, and similarly that $(u^{(\delta)})^{p/2} (K\ast u)$ converges to $u^{p/2} (K\ast u)$ in $L^2([0,T]\times \R^d)$. We will then deduce by the triangle inequality that for any $\eps>0$, there exists $\Delta_{\eps}$ such that for any $\delta < \Delta_{\eps}$ and $t \in [0,T]$,
%
\begin{equation}\label{eq:epsRdelta}
	\left(\int_0^t \int_{\R^d}(u^{(\delta)}_s)^{p -2}  \big|[u_s(K\ast u_s)]\ast \rho_{\delta}  - u^{(\delta)}_s(K\ast u_s) \big|^2 \, \dd x \,\dd s \right)^{\frac{1}{2}} \leq \eps.
\end{equation}
To prove the aforementioned $L^2([0,T]\times \R^d)$ convergences, let us apply Vitali's convergence theorem. We only make the argument precise for the convergence of $(u^{(\delta)})^{p/2 -1}  [u(K\ast u)]\ast \rho_{\delta}$, the one for $(u^{(\delta)})^{p/2} (K\ast u)$ follows similarly. 
First, we have that $\big((u^{(\delta)})^{p -2}  ([u(K\ast u)]\ast \rho_{\delta})^2\big)_{\delta>0}$ is uniformly integrable and uniformly absolutely continuous, which is derived from the fact that for any Borel set of $[0,T]\times \R^d$ of the form $A\times B$,
\begin{align*}
\int_{A\times B}  (u^{(\delta)}_{s})^{p -2} \big|[u_{s}(K\ast u_{s})]\ast \rho_{\delta} \big|^2\, \dd x\, \dd s 
&\lesssim \int_{A\times B} (u^{(\delta)}_{s})^{p}\, \dd x\, \dd s\\
& \leq \int_{A} \int_{B} (u_{s})^{p}\, \dd x\, \dd s;
\end{align*}
where we used the boundedness of $K\ast u$ in the first line, which comes from \eqref{KS_properties_kernel_L_infty} and the fact that  $u$ is in $\CC([0,T]; L^1) \cap L^\infty([0,T]; L^{\infty})$; and a convolution inequality with $ \lVert\rho_{\delta}\rVert_{L^1}=1$ in the last passage.
Second, one can check similarly that $(u^{(\delta)})^{p/2 -1}  [u(K\ast u)]\ast \rho_{\delta}$ converges to $u^{p/2} (K\ast u)$ in measure. 
Thus we have obtained that for any $\delta < \Delta_{\eps}$ and $t \in [0,T]$,
$$|R^\delta_t| \leq \eps p (p -1) \chi \left( 1+ \int_0^t \int_{\R^d}  (u^{(\delta)}_s)^{p -2} \vert \nabla u^{(\delta)}_s \vert^2 \, \dd x \,\dd s\right).$$

Plugging the previous bound in \eqref{KS_proof_estimates_PDE_4}, we get that for any $\delta < \Delta_{\eps}$ and $t \in [0,T]$,
\begin{equation}\label{KS_proof_estimates_PDE_4'}
\begin{split}
	\Vert u^{(\delta)}_t \Vert^p_{L^p} 
	&\leq \Vert u^{(\delta)}_0 \Vert^p_{L^p} + p \nu \int_0^t \Vert u^{(\delta)}_s\Vert_{L^{p}}^p\, \dd s \\  
	& \quad+ (p -1) \chi \int_0^t \int_{\R^d}  (u^{(\delta)}_s)^{p}  u_s \, \dd x \,\dd s - p \mu \int_0^t \int_{\R^d}(u^{(\delta)}_s)^{p -1}  (u_s^2 \ast  \rho_{\delta})\, \dd x \, \dd s  \\  
	&\quad + (\eps p (p -1) \chi- p (p -1)) \int_0^t \int_{\R^d} (u^{(\delta)}_s)^{p -2} \vert \nabla u^{(\delta)}_s \vert^2 \, \dd x \,\dd s + \eps p (p -1) \chi. 
\end{split}
\end{equation}
Note that the previous inequality is true for any $p \in (1,+\infty)$. Taking $\eps$ small enough so that $\eps p (p -1) \chi- p (p -1) \leq 0$ and $\eps p (p -1) \chi \leq 1$, we deduce that  
\begin{align*}
	\Vert u^{(\delta)}_t \Vert^p_{L^p} 
	&\leq\Vert u^{(\delta)}_0 \Vert^p_{L^p} + p \nu \int_0^t \Vert u^{(\delta)}_s\Vert_{L^{p}}^p\, \dd s+ (p -1) \chi \int_0^t \int_{\R^d}  (u^{(\delta)}_s)^{p}  u_s \, \dd x \,\dd s 
	\\
	&\qquad- p \mu \int_0^t \int_{\R^d}(u^{(\delta)}_s)^{p -1}  (u_s^2 \ast  \rho_{\delta})\, \dd x \, \dd s  +1. 
\end{align*}
Letting $\delta \to 0$ yields, for all $t \in[0,T]$,
\begin{align*}
	\Vert u_t \Vert^p_{L^p} \leq  1+\Vert u_0 \Vert^p_{L^p} + p \nu \int_0^t \Vert u_s\Vert_{L^{p}}^p\, \dd s - p \left( \mu - \chi \frac{p -1}{p}\right) \int_0^t  \Vert u_s \Vert_{L^{p +1}}^{p +1} \,\dd s.
\end{align*}
 Now assuming that $p < \frac{\chi}{(\chi - \mu)^+}$ gives
 \begin{align*}
	\Vert u_t \Vert^p_{L^p} \leq  1+\Vert u_0 \Vert^p_{L^p} + p \nu \int_0^t \Vert u_s\Vert_{L^{p}}^p\, \dd s.
\end{align*}
Grönwall's Lemma allows to conclude  that \eqref{KS_a_priori_bound_L_gamma} holds for $p < \frac{\chi}{(\chi - \mu)^+}$, with $F(p,T) = (1+\Vert u_0 \Vert^p_{L^p}) e^{p\nu T}$.

\bigskip

\textbf{Step 4: Proof of \eqref{KS_a_priori_bound_L_gamma} for $p \in (1,+\infty)$.} 
We aim now to bootstrap the integrability obtained in Step $3$ to any value of $p$. 
Since $\mu > \frac{d-2}{d} \chi$, it comes that $\frac{\chi}{(\chi - \mu)^+}> \frac{d}{2}$. Thus by Step $3$, there exists $p_0 \in (\frac{d}{2},\frac{\chi}{(\chi - \mu)^+})$ and a locally-bounded function $T \in \R_+ \mapsto F(p_0,T) \in \R_+^*$ such that for any $T < T^*$, 
\begin{equation}\label{KS_proof_estimates_PDE_ext_1}
\sup_{t\in [0,T]} \Vert u_t \Vert_{L^{p_0}} \leq F(p_0,T).
\end{equation}
Let us fix $p > p_0$ and $T < T^*$. Let us set, for $\delta >0$ and $t \in [0,T]$, $w^{(\delta)}_t \coloneqq (u^{(\delta)}_t)^{\frac{p}{2}}.$ 
By \eqref{KS_proof_estimates_PDE_ext_1}, we have 
\begin{equation}\label{KS_proof_estimates_PDE_ext_2}
\sup_{\delta >0} \sup_{t\in [0,T]} \Vert w^{(\delta)}_t \Vert_{L^{\frac{2p_0}{p}}} \leq F(p_0,T)^{\frac{p}{2}}.
\end{equation}
Here again, we follow the approach of \cite{TelloWinkler}, except that we will avoid the use of a Poincaré inequality, which would obviously do not hold in our whole space setting. Denoting by $W^{1,2}(\R^d)$ the usual Sobolev space of square integrable functions having square integrable weak derivative, the Gagliardo-Nirenberg inequality~\cite{BrezisMironescu} yields 
\begin{align*}
	\int_{\R^d} (u^{(\delta)}_t)^{p +1} \, \dd x 
	& = \Vert w^{(\delta)}_t\Vert_{L^{\frac{2(p +1)}{p}}}^{\frac{2(p +1)}{p}} \\ \notag
	&\leq C_{GN} \Vert w^{(\delta)}_t\Vert_{W^{1,2}}^{\frac{2(p +1)}{p}a} \Vert w^{(\delta)}_t\Vert_{L^{\frac{2p_0}{p}}}^{\frac{2(p +1)}{p}(1-a)},
\end{align*}
for the Gagliardo-Nirenberg constant $C_{GN}$ that depends only on $d,p$ and $p_{0}$, and for
\begin{equation*}
a= \frac{\frac{dp}{2p_0} - \frac{dp}{2(p +1)}}{1 - \frac{d}{2} + \frac{dp}{2p_0}}.
\end{equation*}
Because $p > p _0 > \frac{d}{2} > \frac{d-2}{d}$, one has $a\in (0,1)$. Then 
\begin{align}\label{KS_proof_estimates_PDE_ext_3}
	\int_{\R^d} (u^{(\delta)}_t)^{p +1} \, \dd x 
	&\leq C_{GN}\, F(p_0,T)^{(p+1)(1-a)} \Big( \Vert \nabla w^{(\delta)}_t\Vert_{L^2}^{\frac{2(p +1)}{p}a} +  \Vert w^{(\delta)}_t\Vert_{L^2}^{\frac{2(p +1)}{p}a}\Big) \nonumber \\ 
	&\leq   C_{d,p_0,p,T} \left[ \left(\int_{\R^d} (u^{(\delta)}_t)^{p-2} \vert \nabla u^{(\delta)}_t \vert^2 \, \dd x  \right)^{\frac{p+1}{p}a} + \left(\int_{\R^d} (u^{(\delta)}_t)^{p} \, \dd x  \right)^{\frac{p+1}{p}a}\right],
\end{align}
where we now denote by $C_{d,p_0,p,T}$ a constant that depends only on $d,p,p_{0}$ and $F(p_{0},T)$ and that may change from line to line. 
Since $p_0 > \frac{d}{2}$, observe that $\frac{p +1}{p}a <1$; hence for any $\eps>0$, there exists $C_{\eps}$ such that for any $X\geq 0$, $X^{\frac{p +1}{p}a}\leq C_{\eps} + \eps X$. From this inequality and \eqref{KS_proof_estimates_PDE_ext_3}, it follows that for any $\eps >0$, there exists $C_{\eps,d,p_0,p,T}$ such that
\begin{equation}\label{eq:boundLp+1}
	\int_{\R^d} (u^{(\delta)}_t)^{p +1} \, \dd x  \leq    \eps \int_{\R^d} (u^{(\delta)}_t)^{p-2} \vert \nabla u^{(\delta)}_t \vert^2 \, \dd x   + C_{\eps,d,p_0,p,T}+  C_{d,p_0,p,T}\int_{\R^d} (u^{(\delta)}_t)^{p} \, \dd x.
\end{equation}

Now for $\eps>0$ and $\Delta_{\eps}$ such that \eqref{eq:epsRdelta} holds true whenever $\delta<\Delta_{\eps}$, recall that Equation~\eqref{KS_proof_estimates_PDE_4'} holds for any $p\in (1,+\infty)$ and therefore, combining \eqref{KS_proof_estimates_PDE_4'} with \eqref{eq:boundLp+1}, we get that for any $\delta<\Delta_{\eps}$ and $t \in [0,T]$,
\begin{align*}
	\Vert u^{(\delta)}_t \Vert^p_{L^p} 
	&\leq  \Vert u^{(\delta)}_0 \Vert^p_{L^p} + \Big(p \nu + C_{d,p_0,p,T} \frac{p (p -1)(1 - \eps  \chi)}{\eps} \Big) \int_0^t \Vert u^{(\delta)}_s\Vert_{L^{p}}^p\, \dd s  \\
	&\quad + (p -1) \chi \int_0^t \int_{\R^d}  (u^{(\delta)}_s)^{p}  u_s \, \dd x \,\dd s - p \mu \int_0^t \int_{\R^d}(u^{(\delta)}_s)^{p -1}  (u_s^2 \ast  \rho_{\delta})\, \dd x \, \dd s  \\  
	&\quad -\frac{p (p -1)(1 - \eps  \chi)}{\eps} \int_0^t \Vert u^{(\delta)}_s\Vert_{L^{p+1}}^{p+1} \, \dd s+ C_{\chi,\eps,d,p_0,p,T},
\end{align*}
where $T \in \R_+ \mapsto C_{\chi,\eps,d,p_0,p,T}$ is a locally bounded function. Letting $\delta\to 0$ in the previous inequality, we obtain  that for any $t \in [0,T]$,
\begin{align*}
	\Vert u_t \Vert^p_{L^p} &\leq  \Vert u_0 \Vert^p_{L^p} + \Big(p \nu + C_{d,p_0,p,T} \frac{p (p -1)(1 - \eps  \chi)}{\eps} \Big) \int_0^t \Vert u_s\Vert_{L^{p}}^p\, \dd s   \\
	&\quad  -\left(p\mu + \frac{p (p -1)(1 - \eps \chi)}{\eps} - (p -1)\chi \right) \int_0^t \Vert u_s\Vert_{L^{p+1}}^{p+1} \, \dd s+ C_{\chi,\eps,d,p_0,p,T}.
\end{align*}
Choosing $\eps$ small enough yields for any $t \in [0,T]$,
\begin{align*}
	\Vert u_t \Vert^p_{L^p} \leq  \Vert u_0 \Vert^p_{L^p} + \Big(p \nu + C_{d,p_0,p,T} \frac{p (p -1)(1 - \eps  \chi)}{\eps} \Big) \int_0^t \Vert u_s\Vert_{L^{p}}^p\, \dd s  + C_{\chi,\eps,d,p_0,p,T}.
\end{align*}
Grönwall's lemma thus allows to conclude that \eqref{KS_a_priori_bound_L_gamma} holds true for any $p\in(1,+\infty)$.

\bigskip

\textbf{Step 5: Proof of \eqref{KS_a_priori_bound_L_gamma} for $p = + \infty$.}
The goal is now to extend the $L^p$-bounds on $u$ proved for $p \in [1,+\infty)$, to an $L^\infty$-bound.

Let us fix $t< T^*$. In view of the mild formulation \eqref{eq:KSmild}, use that $u_t$ is non-negative almost everywhere and that the semigroup $e^{t\Delta}$ is positive to deduce
%
%
\begin{align}\label{KS_proof_estimates_PDE_6}
	\Vert u_t \Vert_{L^{\infty}} &\leq \Vert u_0 \Vert_{L^{\infty}}  +\chi  \left\Vert \int_0^t \nabla \cdot e^{(t-s)\Delta} ((K\ast u_s)u_s)  \, \dd s \right\Vert_{L^{\infty}} + \nu \int_0^t \Vert u_s \Vert_{L^{\infty}}\, \dd s.
\end{align}
Let $p >d$ and $\alpha \in (\frac{d}{p}, 1)$. Using the classical Sobolev embedding $H^{\alpha}_p(\R^d) \hookrightarrow L^{\infty}(\R^d)$, we get
\begin{align*}
\left\Vert \int_0^t \nabla \cdot e^{(t-s)\Delta} ((K\ast u_s)u_s)  \, \dd s \right\Vert_{L^{\infty}}  
&\lesssim \left\Vert (I - \Delta)^{\frac{\alpha}{2}} \int_0^t \nabla \cdot e^{(t-s)\Delta} ((K\ast u_s)u_s)  \, \dd s \right\Vert_{L^{p}} \\ 
&\lesssim \int_0^t   \left\Vert (I - \Delta)^{\frac{\alpha}{2}} \nabla \cdot e^{(t-s)\Delta} ((K\ast u_s)u_s) \right\Vert_{L^{p}} \, \dd s  \\ 
&\lesssim \int_0^t  \left\Vert (I - \Delta)^{\frac{\alpha}{2}} \nabla  g_{2(t-s)} \right\Vert_{L^q}  \Vert (K\ast u_s)u_s \Vert_{L^{\tilde q}} \, \dd s ,
\end{align*}
where the last passage follows by a convolution inequality with $1+\frac{1}{p} = \frac{1}{q} + \frac{1}{\tilde q}$. For the following estimates, we want $q$ close to $1$, but $q>1$ is imposed for the interpolation inequalities we will use. Hence, let us choose $p>2d$ so that one can choose $\alpha < 1/2$; and fix $q<\frac{4d}{4d-1}$.
Now since $\Vert (I - \Delta)^{\frac{\alpha}{2}} \nabla  g_{2(t-s)} \Vert_{L^q} = \lVert \nabla  g_{2(t-s)} \rVert_{\alpha,q}$ and $H^{\alpha}_{q}$ coincides with the Triebel-Lizorkin space $F^\alpha_{q,2}$ when $1<q<\infty$ (see \cite[p.14]{RunstSickel}), the interpolation inequality for such spaces (\cite[p.87]{RunstSickel}) yields
\begin{align*}
\lVert \nabla  g_{2(t-s)} \rVert_{\alpha,q} 
&\leq \lVert \nabla  g_{2(t-s)} \rVert_{L^q}^{1-\alpha} \, \lVert \nabla  g_{2(t-s)} \rVert_{1,q}^\alpha \\
&\lesssim \lVert \nabla  g_{2(t-s)} \rVert_{L^q}^{1-\alpha} \, \big(\lVert \nabla  g_{2(t-s)} \rVert_{L^q} + \lVert \nabla^2  g_{2(t-s)} \rVert_{L^q}\big)^\alpha \\
& \lesssim (1\wedge (t-s))^{-(\frac{1}{2} + \frac{\alpha}{2} + d(1-\frac{1}{q}))} ,
\end{align*}
where we used, in the second inequality, the equivalence of norms between $H^1_{q}$ and the usual Sobolev space $W^{1,q}(\R^d)$ (see \cite[p.14]{RunstSickel}); and using the Gaussian estimates
 \eqref{eq:heat-estimate} in the last inequality. Since $\alpha < 1/2$ and $q<\frac{4d}{4d-1}$, we get that $\frac{1}{2} + \frac{\alpha}{2} + d(1-\frac{1}{q})\eqqcolon \eta \in (\frac{1}{2},1)$.
Thus
\begin{align}\label{KS_proof_estimates_PDE_7}
\left\Vert \int_0^t \nabla \cdot e^{(t-s)\Delta} ((K\ast u_s)u_s)  \, \dd s \right\Vert_{L^{\infty}}  
&\lesssim \int_0^t  (1\wedge (t-s))^{-\eta} \Vert (K\ast u_s)u_s \Vert_{L^{\tilde q}} \, \dd s \nonumber\\
&\lesssim C_{T,\eta}\, \sup_{s \in [0,t]} \Vert K\ast u_s \Vert_{L^{\infty}}\,  \sup_{s \in [0,t]}\Vert u_s \Vert_{L^{\tilde q}}.
\end{align}
Hence, in view of \eqref{Bound_K_convol_L_infty} and \eqref{KS_a_priori_bound_L_gamma}, we deduce that for $r\in (d,+\infty)$,
\begin{align*}
 \sup_{s \in [0,t]} \Vert K\ast u_s \Vert_{L^{\infty}} \leq C_{K,d} \sup_{s \in [0,t]} \lVert u_{s} \rVert_{L^{1}\cap L^r}
 \lesssim (1 + F(r,T)) <\infty ;
\end{align*}
and 
\begin{align*}
\sup_{s \in [0,t]}\Vert u_s \Vert_{L^{\tilde q}} \leq F(\tilde{q}, T).
\end{align*}
Plugging these two bounds in \eqref{KS_proof_estimates_PDE_7}, using that $\frac{\alpha +1}{2}< 1$ and finally coming back to \eqref{KS_proof_estimates_PDE_6}, we get that for any $t \in [0,T^*)$,
\begin{align*}
	\Vert u_t \Vert_{L^{\infty}} 
	\leq \Vert u_0 \Vert_{L^{\infty}}  +  C_{T,\eta} F(\tilde q, T)\, (1+ F(r,T)) + \nu \int_0^t \Vert u_s \Vert_{L^{\infty}}\, \dd s.
\end{align*}
Applying Grönwall's Lemma, we conclude that \eqref{KS_a_priori_bound_L_gamma} holds true for $p=+\infty$. 
\end{proof}


\section{Convergence of the moderately interacting particle system to the Keller--Segel PDE}
\label{sec:particles}

The goal of this section is to prove Theorem~\ref{th:convergence_moderate}. We proceed as follows: in Subsection~\ref{subsec:eqempmeas}, we establish the equations satisfied by the empirical measure $\mu^N$ and the mollified empirical measure $u^N$; then in Subsection~\ref{subsec:boundparticles}, we use these equations to bound the moments of the mass of the system; Subsection~\ref{subsec:stocint} gathers results on the moments of the stochastic integrals $M^N$ and $J^N$ defined in \eqref{KS_mollified_empirical_mesure_equation_mild} and contains proofs for the latter. Then in Subsection~\ref{subsec:apriorimollempmeas}, we establish an \emph{a priori} bound on the moments of $u^N$, which will be used to carry out the proof of Theorem~\ref{th:convergence_moderate} in Subsection~\ref{subsec:proofThm2}. Finally we prove Corollary~\ref{cor:KR-convergence} in Subsection~\ref{subsec:proofCor}.

\subsection{Equation satisfied by the empirical measure}
\label{subsec:eqempmeas}

The goal of this section is to derive the equations satisfied by $\mu^N$ and $u^N$, see respectively Equation~\eqref{weak_formulation_empirical_measure} and Equation~\eqref{KS_mollified_empirical_mesure_equation_mild}. Let ${\phi : \R^+ \times \R^d \to \R}$ be a function which is $\CC^1$ with respect to time and $\CC^2$ with respect to space, and set $\phi(t,\triangle) = 0$ for any $t$. Recalling the construction of the interacting particle system from Section~\ref{subsubsec:constructionIPS} and thanks to Itô's formula over random time intervals, we deduce that for any $k \in \Lambda^N$,
\begin{align*}
\phi(t,X^k_t) 
&= \phi(T^k_0,X^k_{T^k_0}) \1_{\{t \geq T^k_0\}} - \phi(T^k_1,X^k_{T^k_1}) \1_{\{t \geq T^k_1\}} + \chi \int_0^t \1_{\{s \in L^k\}} \nabla \phi (s,X^k_s) \cdot F_A(K\ast u^N_s(X^k_s)) \, \dd s \\ 
&\quad +  \int_0^t \1_{\{s \in L^k\}} \pa_s \phi (s,X^k_s) \, \dd s + \sqrt{2}\int_0^t \1_{\{s \in L^k\}}  \nabla \phi (s,X^k_s)\cdot \dd B^k_s + \int_0^t \1_{\{s \in L^k\}}  \Delta \phi (s,X^k_s) \, \dd s,
\end{align*}
where we recall that $L^k = [T^k_{0},T^k_{1})$ is the interval of existence of particle $k$.
By summing over $k \in \Lambda^N$ and since $\phi(t,\triangle)=0$, it follows that
\begin{align}\label{eq1_dynamics_particles}
\langle \mu^N_t, \phi(t,\cdot) \rangle \notag
&= \frac1N \sum_{k \in \Lambda^N}\phi(T^k_0,X^k_{T^k_0}) \1_{\{t \geq T^k_0\}} - \frac1N \sum_{k\in \Lambda^N}\phi(T^k_1,X^k_{T^k_1}) \1_{\{t \geq T^k_1\}} \\ 
&\quad + \int_0^t \langle \mu^N_s,\pa_s\phi (s,\cdot)\rangle \, \dd s  + \chi\int_0^t \langle \mu^N_s, \nabla \phi (s,\cdot) \cdot F_A(K\ast u^N_s(\cdot)) \rangle \, \dd s \\
 \notag 
&\quad  + \int_0^t \langle \mu^N_s,\Delta \phi (s,\cdot)\rangle \, \dd s  + \frac{\sqrt{2}}{N} \sum_{k \in \Lambda^N}\int_0^t \1_{\{s \in L^k\}}  \nabla \phi (s,X^k_s) \cdot  \dd B^k_s.
\end{align}
Now observe that 
\begin{align*}
\frac1N \sum_{k \in \Lambda^N}\phi(T^k_0,X^k_{T^k_0}) \1_{\{t \geq T^k_0\}}  &= \frac1N \sum_{k_0=1}^N\phi(0,\xi^{k_0}) + \frac1N \sum_{k \in \Lambda^N\backslash\{ (k_0), \, 1\leq k_0 \leq N\}}\phi(T^k_0,X^k_{T^k_0}) \1_{\{t \geq T^k_0\}} .
\end{align*}
Using the fact that if a particle $k \in \Lambda^N\backslash\{ (k_0), \, 1\leq k_0 \leq N\}$ was born before time $t$, then its mother $(k,-)$ must have divided before time $t$ and given birth to two particles at the place where it died, we find that 
\begin{align*}
&\frac1N \sum_{k \in \Lambda^N\backslash\{ (k_0), \, 1\leq k_0 \leq N\}} \phi(T^k_0,X^k_{T^k_0}) \1_{\{t \geq T^k_0\}}\\
&= \frac1N \sum_{k \in \Lambda^N\backslash\{ (k_0), \, 1\leq k_0 \leq N\}} \phi\big(T^{(k,-)}_1,X^{(k,-)}_{T^{(k,-)}_1}\big) \1_{\{t \geq T^{(k,-)}_1\}} \1_{\{T^{(k,-),\text{div}}_1 <T^{(k,-),\text{die}}_1\}}
	\\ 
&=\frac2N \sum_{k \in \Lambda^N}\phi(T^k_1,X^k_{T^k_1}) \1_{\{t \geq T^k_1\}} \1_{\{T^{k,\text{div}}_1 <T^{k,\text{die}}_1\}}.
\end{align*}
It follows that 
\begin{equation}\label{eq2_dynamics_particle}
\begin{split}
&\frac1N \sum_{k \in \Lambda^N}\phi(T^k_0,X^k_{T^k_0}) \1_{\{t \geq T^k_0\}} - \frac1N \sum_{k\in \Lambda^N}\phi(T^k_1,X^k_{T^k_1}) \1_{\{t \geq T^k_1\}} \\ 
&= \frac1N \sum_{k_0=1}^N\phi(0,\xi^{k_0}) +  \frac1N \sum_{k \in \Lambda^N}\phi(T^k_1,X^k_{T^k_1}) \1_{\{t \geq T^k_1\}} \1_{\{T^{k,\text{div}}_1 <T^{k,\text{die}}_1\}} \\
&\quad - \frac1N \sum_{k \in \Lambda^N}\phi(T^k_1,X^k_{T^k_1}) \1_{\{t \geq T^k_1\}} \1_{\{T^{k,\text{div}}_1 \geq T^{k,\text{die}}_1\}}.
\end{split}
\end{equation}
We will rewrite this identity in terms of the underlying random Poisson measure that was used to define the stopping times $T^{k,\text{div}}_1$ and $T^{k,\text{die}}_1$ in \eqref{eq:defT1}. Namely, recall that $\NN$ is random measure on $\R^+ \times \Lambda^N \times \R^+$ with intensity measure $\lambda \otimes m \otimes \lambda,$ where $m$ is the counting measure on $\Lambda^N$. Since the particle $k \in \Lambda^N$ divides with a constant rate $\nu$ and dies at time $s$ with an instantaneous rate $\mu (u^N_s(X^k_s)\wedge A)$, we have 
\begin{align*}
& \frac1N \sum_{k \in \Lambda^N}\phi(T^k_1,X^k_{T^k_1}) \1_{\{t \geq T^k_1\}} \1_{\{T^{k,\text{div}}_1 <T^{k,\text{die}}_1\}} 
- \frac1N \sum_{k \in \Lambda^N}\phi(T^k_1,X^k_{T^k_1}) \1_{\{t \geq T^k_1\}} \1_{\{T^{k,\text{div}}_1 \geq T^{k,\text{die}}_1\}}\\ 
&=  \frac1N \int_0^t \int_{\Lambda^N} \int_{\R_+} \1_{\{k \in I_s\}} \phi(s,X^k_{s^-}) \Big(\1_{\{\rho < \nu\}} - \1_{\big\{\nu\leq \rho < \nu + \mu u^{N}_s(X^k_{s^-})\wedge A\big\}}\Big)\,  \NN (\dd s,\dd k,\dd \rho).
\end{align*}
Plugging this in \eqref{eq2_dynamics_particle} and coming back to \eqref{eq1_dynamics_particles}, we deduce that 
\begin{equation}
\label{eq:muN}
\begin{split}
\langle& \mu^N_t, \phi(t,\cdot) \rangle \\
&=  \langle \mu^N_0,\phi(0,\cdot)\rangle + \int_0^t \langle \mu^N_s,\pa_s\phi (s,\cdot)\rangle \, \dd s  + \chi\int_0^t \langle \mu^N_s, \nabla \phi (s,\cdot) \cdot F_A(K\ast u^N_s(\cdot)) \rangle \, \dd s \\ 
&\quad  + \int_0^t \langle \mu^N_s,\Delta \phi (s,\cdot)\rangle \, \dd s  + \frac{\sqrt{2}}{N} \sum_{k \in \Lambda^N}\int_0^t \1_{\{s \in L^k\}}  \nabla \phi (s,X^k_s) \cdot \dd B^k_s \\
& \quad + \frac1N \int_0^t \int_{\Lambda^N} \int_{\R_+} \1_{\{k \in I_s\}} \phi(s,X^k_{s^-}) \Big(\1_{\{\rho < \nu\}} - \1_{\big\{\nu\leq \rho < \nu + \mu u^{N}_s(X^k_{s^-})\wedge A\big\}} \Big)\, \NN (\dd s,\dd k,\dd \rho).
\end{split}
\end{equation}
Let us now introduce the compensated Poisson random measure $\NNN$ defined by 
\begin{equation*}
\NNN (\dd s, \dd k, \dd \rho) \coloneqq \NN (\dd s, \dd k, \dd \rho) - \dd s\otimes m(\dd k) \otimes\dd \rho.
\end{equation*}
Since we have 
\begin{align*}
&\frac1N \int_0^t \int_{\Lambda^N} \int_{\R_+} \1_{\{k \in I_s\}} \phi(s,X^k_{s^-}) \Big(\1_{\{\rho < \nu\}} - \1_{\big\{\nu\leq \rho < \nu + \mu u^{N}_s(X^k_{s^-})\wedge A\big\}}\Big) \dd s\,  m(\dd k) \, \dd \rho \\ 
& = \int_0^t \frac1N \sum_{k \in I_s} \phi(s,X^k_s) (\nu - \mu (u^N_s(X^k_s) \wedge A)) \, \dd s  \\ &=\int_0^t \langle \mu^N_s, \phi(s,\cdot) (\nu - \mu (u^N_s(\cdot) \wedge A)) \rangle \, \dd s,
\end{align*} 
it comes that Equation~\eqref{eq:muN} rewrites using $\NNN$ as 
\begin{align}
\label{weak_formulation_empirical_measure} 
\notag\langle \mu^N_t, \phi(t,\cdot) \rangle 
&=  \langle \mu^N_0,\phi(0,\cdot)\rangle + \int_0^t \langle \mu^N_s,\pa_s\phi (s,\cdot)\rangle \, \dd s  + \chi \int_0^t \langle \mu^N_s, \nabla \phi (s,\cdot) \cdot F_A(K\ast u^N_s(\cdot)) \rangle \, \dd s \\ \notag 
&\quad  + \int_0^t \langle \mu^N_s,\Delta \phi (s,\cdot)\rangle \, \dd s  + \int_0^t \langle \mu^N_s, \phi(s,\cdot) (\nu - \mu (u^N_s(\cdot) \wedge A)) \rangle \, \dd s \\ 
&\quad + \frac{\sqrt{2}}{N} \sum_{k \in \Lambda^N}\int_0^t \1_{\{s \in L^k\}}  \nabla \phi (s,X^k_s) \cdot \dd B^k_s \\ \notag 
& \quad + \frac1N \int_0^t \int_{\Lambda^N} \int_{\R_+} \1_{\{k \in I_s\}} \phi(s,X^k_{s^-}) \Big(\1_{\{\rho < \nu\}} - \1_{\big\{\nu\leq \rho < \nu + \mu u^{N}_s(X^k_{s^-})\wedge A\big\}}\Big)\, \NNN (\dd s,\dd k,\dd \rho).
\end{align}
To derive the equation satisfied by the mollified empirical measure $u^N$, we apply the previous equality to $\phi_{t,x}(s,y) = e^{(t-s)\Delta} \theta^N(x-y)$, for fixed $t\in [0,T]$ and $x \in \R^d$. Then we get the following mild formulation
%
\begin{equation}\label{KS_mollified_empirical_mesure_equation_mild}
	\begin{split}
		u^N_t(x) &= e^{t\Delta}u^N_0(x) - \chi \int_0^t \nabla \cdot e^{(t-s)\Delta} \langle \mu^N_s, K\ast u^N_s(\cdot) \theta^N(x-\cdot) \rangle \, \dd s  \\ 
		&\quad + \int_0^t e^{(t-s)\Delta}\langle \mu^N_s, (\nu  - \mu u^N_s\wedge A)\theta^N(x-\cdot) \rangle \, \dd s \\  
		& \quad - M^N_t + J^N_t, 
	\end{split}
\end{equation}
where 
\begin{equation}\label{KS_brownian_conv_integral_def}
M^N_t \coloneqq \frac{\sqrt{2}}{N} \sum_{k \in \Lambda^N} \int_0^t  \1_{\{k \in I_s\}} e^{(t-s)\Delta}\nabla \theta^N (x-X^k_s) \cdot  \dd B^k_s,
\end{equation} 
and 
\begin{equation}\label{KS_Poisson_conv_integral_def}
	J^N_t \coloneqq \frac1N \int_0^t \int_{\Lambda^N} \int_{\R_+} \1_{\{k \in I_s\}} e^{(t-s)\Delta}\theta^N(x-X^k_{s^-}) \Big(\1_{\{\rho < \nu\}} - \1_{\big\{\nu\leq \rho < \nu + \mu u^{N}_s(X^k_{s^-})\wedge A\big\}}\Big) \, \NNN (\dd s,\dd k,\dd \rho).
\end{equation}

\subsection{Control of the number of particles in the system}
\label{subsec:boundparticles}

We aim to give a control on the number of particles alive at time $t$ which is given, for any $t \in [0,T]$, by 
\begin{equation*}
\mathfrak{N}^N_t \coloneqq N\langle  \mu^N_t, 1 \rangle = \text{Card}(I_t),
\end{equation*}
or equivalently on the renormalised number of particles alive at time $t$ with respect to the initial size $N$ of the population:
\begin{equation*}
m^N_t \coloneqq \langle  \mu^N_t, 1 \rangle.
\end{equation*}

\begin{proposition}
\label{KS_prop_moment_k_number_ptcles}
	For any $q \geq 1$, we have
	\begin{equation*}
		\sup_{N \geq 1}\sup_{t \in [0,T]} \E |m^N_t|^q = \sup_{N \geq 1}\sup_{t \in [0,T]} \E \Vert u^N_t\Vert_{L^1}^q < + \infty.
	\end{equation*}
\end{proposition}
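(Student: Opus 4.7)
My plan is to start by plugging the test function $\phi \equiv 1$ into the weak formulation \eqref{weak_formulation_empirical_measure}. Since $\nabla \phi \equiv 0$ and $\Delta \phi \equiv 0$, the Brownian integral, the Keller--Segel drift and the Laplacian term all vanish, and we obtain
\begin{equation*}
m^N_t = m^N_0 + \int_0^t \big(\nu m^N_s - \mu \langle \mu^N_s, u^N_s \wedge A \rangle \big) \, \dd s + \widetilde M^N_t,
\end{equation*}
where $\widetilde M^N$ is the resulting compensated Poisson integral. Equivalently, $\mathfrak{N}^N_t = N m^N_t$ is an integer-valued pure jump process with $\mathfrak{N}^N_0 = N$, having unit upward jumps at instantaneous rate $\nu \mathfrak{N}^N_s$ (divisions), and unit downward jumps at the non-negative rate $\mu \sum_{k \in I_s} (u^N_s(X^k_s) \wedge A)$ (deaths).

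The next step is to apply It\^o's formula for pure jump processes to $f(\mathfrak{N}^N_t) = (\mathfrak{N}^N_t)^q$. Since $f$ is increasing on $\R_+$, the contribution of downward (death) jumps to $f(\mathfrak{N}^N)$ is non-positive and can be discarded. To make this rigorous, I introduce the stopping time $\tau_M \coloneqq \inf\{t \geq 0 : \mathfrak{N}^N_t \geq M\}$ and work on $[0, t \wedge \tau_M]$, where the integrand in the Poisson martingale is bounded and hence the martingale has zero expectation. Taking expectation, this yields
\begin{equation*}
\E\bigl[(\mathfrak{N}^N_{t \wedge \tau_M})^q\bigr] \leq N^q + \nu \, \E\!\left[\int_0^{t \wedge \tau_M} \mathfrak{N}^N_s \bigl[(\mathfrak{N}^N_s + 1)^q - (\mathfrak{N}^N_s)^q\bigr] \, \dd s\right].
\end{equation*}

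Using the mean value theorem together with $(x+1)^{q-1} \leq 2^{q-1} x^{q-1}$ for integer $x \geq 1$, and noting that the integrand vanishes when $\mathfrak{N}^N_s = 0$, I bound the integrand by $C_q (\mathfrak{N}^N_s)^q$. Grönwall's lemma then yields $\E[(\mathfrak{N}^N_{t \wedge \tau_M})^q] \leq N^q e^{C_q \nu t}$ uniformly in $M$, and Fatou's lemma as $M \to \infty$ gives $\sup_{t \in [0,T]} \E[(\mathfrak{N}^N_t)^q] \leq C_{q,T,\nu} N^q$; dividing by $N^q$ concludes.

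The main technical point is the truncation argument: one needs $\tau_M \to \infty$ a.s.\ as $M \to \infty$, \emph{i.e.}\ no explosion of $\mathfrak{N}^N$ occurs on $[0,T]$. The uniform-in-$M$ moment bound above already precludes explosion via Markov's inequality; alternatively, a cleaner \emph{a priori} argument is to build a coupling in which $\mathfrak{N}^N$ is stochastically dominated by a Yule process with individual birth rate $\nu$ starting from $N$, whose moments are finite on any bounded time interval.
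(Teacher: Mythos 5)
Your proof is correct and follows essentially the same route as the paper: plug $\phi\equiv 1$ into the weak formulation, apply It\^o's formula for pure-jump processes to $|\cdot|^q$, bound the jump increment via the mean value theorem, and conclude with Gr\"onwall. The only (minor) differences are that the paper keeps both the birth and death contributions, bounding the $\rho$-integral by $\nu+\mu A$, and invokes It\^o's formula for Poisson integrals directly without an explicit stopping-time localisation, whereas you discard the non-positive death contribution and spell out the truncation and Fatou argument to rule out explosion; both are slight variants of the same argument, with yours being a little more careful about integrability.
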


\begin{proof}
 Taking $\phi =1$ in \eqref{weak_formulation_empirical_measure}, we get that for any $t \in [0,T]$,
\begin{align*}
	m^N_t \notag &= 1 +\frac1N \int_0^t \int_{\Lambda^N} \int_{\R_+} \1_{\{k \in I_s\}}  \Big(\1_{\{\rho < \nu\}} - \1_{\big\{\nu\leq \rho < \nu + \mu u^{N}_s(X^k_{s^-})\wedge A\big\}}\Big)\, \NN (\dd s,\dd k,\dd \rho) .
\end{align*}
Applying Itô's formula for this Poisson random integral and for the continuous function $|\cdot|^q$ with $q \geq 1$ (see Lemma $4.4.5$ in \cite{Applebaum}), we get that for any $t \in [0,T]$,
\begin{align*}
	&\E |m^N_t|^q \\ 
	&= 1 + \sum_{k \in \Lambda^N}\int_0^t \int_{\R_+} \E\Big[\Big\vert m^N_{s^-} + \frac{1}{N}\1_{\{k \in I_s\}}\Big(\1_{\{\rho < \nu\}} - \1_{\big\{\nu \leq \rho \leq \nu + \mu u^N_s(X^{k}_{s^-})\wedge A\big\}}\Big) \Big\vert^q  - \left\vert m^N_{s^-} \right\vert^q \Big] \, \dd \rho \, \dd s \\ 
	&=  1 + \sum_{k \in \Lambda^N}\int_0^t \int_0^{\nu+\mu A} \E\Big[\Big\vert m^N_{s} + \frac{1}{N}\1_{\{k \in I_s\}}\Big(\1_{\{\rho < \nu\}} - \1_{\big\{\nu \leq \rho \leq \nu + \mu u^N_s(X^{k}_{s})\wedge A\big\}}\Big) \Big\vert^q  - \left\vert m^N_{s} \right\vert^q \Big] \, \dd \rho \, \dd s .
\end{align*}
By the mean-value theorem applied to $\vert \cdot \vert^q$, we get that  
\begin{align*}
	\E |m^N_t|^q &\leq 1+C q\sum_{k \in \Lambda^N}\int_0^t  \int_0^{\nu +\mu A} \E\left[ \big(\left\vert m^N_{s}\right\vert^{q-1} + 1\big) \frac1N \1_{\{k \in I_s\}} \right] \dd \rho \, \dd s \\ 
	&\leq  1+C q\int_0^t \E\left[ \left\vert m^N_{s}\right\vert^{q} +  |m^N_s|\right] \dd s.
	\end{align*}
Grönwall's lemma yields the conclusion. 
\end{proof}

\subsection{Estimates on the stochastic convolution integrals}
\label{subsec:stocint}

In this section, we state and prove Bessel and $L^p$ bounds on the infinite-dimensional stochastic convolution integrals $M^N$ and $J^N$, defined by \eqref{KS_brownian_conv_integral_def} and \eqref{KS_Poisson_conv_integral_def} respectively. Concerning the Brownian integral $M^N$, we recall the following couple of results:
First, an estimate in $H^\gamma_{2}$ is known, see \cite[Propositions~A.8]{Pisa}.
\begin{proposition}\label{KS_estimate_Bessel_Brownian_integral}
	Let $p\geq 1$ and $m \geq 1$. For any $\eps >0$, there exists a positive constant $C= C_{T,d,m,\eps,A,p}$ such that for any $t \in [0,T]$ and $N \geq 1$,
	\begin{equation*}
	\big\Vert \Vert (I-\Delta)^{\frac{\gamma}{2}}M^N_t\Vert_{L^2} \big\Vert_{L^m_\Omega} \leq C N^{-\frac{1}{2} \big( 1 -\alpha (d + 2\gamma  )\big) + \eps}.
	\end{equation*}
\end{proposition}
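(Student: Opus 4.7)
The plan is to apply a Hilbert-space valued Burkholder--Davis--Gundy (BDG) inequality to $M^N_t$, viewed as an $L^2(\R^d)$-valued martingale, and then carefully bound the resulting quadratic variation by splitting the time integral at a well-chosen scale. Since $(I-\Delta)^{\gamma/2}$ commutes with $e^{(t-s)\Delta}$ (both are Fourier multipliers), I would first push the Bessel operator inside the stochastic integral. Using independence of the $(B^k)_{k}$ together with BDG in the Hilbert space $L^2(\R^d)$, I obtain, for $m \geq 2$,
\begin{equation*}
\E \|(I-\Delta)^{\gamma/2} M^N_t\|_{L^2}^m \lesssim \E\bigg(\frac{1}{N^2}\sum_{k\in\Lambda^N}\int_0^t \1_{\{k\in I_s\}} \big\|(I-\Delta)^{\gamma/2} e^{(t-s)\Delta}\nabla \theta^N(\cdot - X^k_s)\big\|_{L^2}^2\, \dd s\bigg)^{m/2},
\end{equation*}
with the case $m \in [1,2)$ following by Jensen's inequality.

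Translation invariance of the $L^2$ norm eliminates the dependence on $X^k_s$, so the sum inside reduces to $\mathfrak{N}^N_s\, \big\|(I-\Delta)^{\gamma/2} e^{(t-s)\Delta}\nabla \theta^N\big\|_{L^2}^2$. Applying Jensen again on the time integral (against the probability measure with density proportional to $\|(I-\Delta)^{\gamma/2} e^{(t-s)\Delta}\nabla \theta^N\|_{L^2}^2$), and using Proposition~\ref{KS_prop_moment_k_number_ptcles} to get $\sup_s \E[(\mathfrak{N}^N_s)^{m/2}]\lesssim N^{m/2}$, I reduce the estimate to controlling the purely deterministic quantity
\begin{equation*}
C_0 \coloneqq \int_0^t \big\|(I-\Delta)^{\gamma/2} e^{(t-s)\Delta}\nabla \theta^N\big\|_{L^2}^2\, \dd s.
\end{equation*}

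The main obstacle is the sharp bound $C_0 \lesssim N^{\alpha(d+2\gamma)}$. A single use of \eqref{eq:Besov-heat-estimate} produces a bad trade-off: leaving $\nabla$ on $\theta^N$ gives $(1\wedge(t-s))^{-\gamma}\, N^{\alpha(d+2)}$, which is integrable in $s$ but too large in $N$; conversely, transferring $\nabla$ onto the heat semigroup gives $(1\wedge(t-s))^{-(\gamma+1)}\, N^{\alpha d}$, sharp in $N$ but non-integrable near $s=t$. The fix is to split the integral at the diffusive scale $t-s = N^{-2\alpha}$, which matches the width of $\theta^N$: use the first estimate on $[t - N^{-2\alpha}, t]$ and the second on $[0, t - N^{-2\alpha}]$. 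A short computation shows that both pieces are of the same order $N^{\alpha(d+2\gamma)}$, with the small $\eps$ in the statement absorbing any endpoint or logarithmic contributions. Combining everything yields
\begin{equation*}
\big\|\|(I-\Delta)^{\gamma/2} M^N_t\|_{L^2}\big\|_{L^m_\Omega}\lesssim N^{-1/2}\, C_0^{1/2} \lesssim N^{-\frac{1}{2}(1 - \alpha(d+2\gamma))+\eps},
\end{equation*}
as claimed.
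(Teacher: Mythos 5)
The paper does not actually prove this proposition; it is imported from \cite[Proposition A.8]{Pisa}. However, the paper's own proof of the twin Poisson estimate, Proposition~\ref{KS_estimate_Bessel_Poisson_integral}, proceeds by a different route from yours: after the functional BDG inequality, rather than splitting the time integral at a scale, the authors split the \emph{derivatives}. They write $(I-\Delta)^{\gamma/2} = (I-\Delta)^{a}(I-\Delta)^{\gamma/2 - a}$ and put the small fractional power $(I-\Delta)^{a}$ (with $a$ just small enough that $(1\wedge(t-s))^{-2a}$ is integrable in $s$ once the BDG powers are accounted for; the $\eps$ comes from this choice) on the heat semigroup, and the rest on $\theta^N$ via Lemma~\ref{lem:BesselMoll}. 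That argument is uniform on $[0,t]$ and works for any $\gamma\ge 0$ in one stroke. Your time-splitting at the diffusive scale $N^{-2\alpha}$ is an equally natural alternative, and the outer computation — Hilbert-space BDG in $L^2$, translation invariance, Jensen against $\phi(s)\,ds/C_0$, then $\sup_s\E[(\mathfrak{N}^N_s)^{m/2}]\lesssim N^{m/2}$ from Proposition~\ref{KS_prop_moment_k_number_ptcles} — is sound (and the usual freeze-the-endpoint device to treat the stochastic convolution as a martingale in the inner time is implicitly present).

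There is, however, a gap in your treatment of the near-diagonal piece. You use, for $s\in[t-N^{-2\alpha},t]$, the bound $\big\|(I-\Delta)^{\gamma/2} e^{(t-s)\Delta}\nabla\theta^N\big\|_{L^2}^2 \lesssim (1\wedge(t-s))^{-\gamma} N^{\alpha(d+2)}$ and then integrate in $s$. This only converges when $\gamma<1$, and even then it carries a $(1-\gamma)^{-1}$ blow-up. But the proposition is invoked in Proposition~\ref{Prop_bounds_Bessel_norm_mollified} (see \eqref{eq:boundUt-I4}) with Bessel index $\eta + d(\tfrac12-\tfrac1p)$, which can exceed $1$ (e.g.\ $d=2$, $p=r$ large, $\eta$ near $1$), so your argument as written does not cover all needed cases. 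The clean fix is to avoid paying for the derivatives with the semigroup on the short interval: since $e^{(t-s)\Delta}$ is a contraction on $H^\gamma_2$,
\begin{equation*}
\big\|(I-\Delta)^{\gamma/2} e^{(t-s)\Delta}\nabla\theta^N\big\|_{L^2}\le \big\|(I-\Delta)^{\gamma/2}\nabla\theta^N\big\|_{L^2}\lesssim \big\|(I-\Delta)^{(\gamma+1)/2}\theta^N\big\|_{L^2}\lesssim N^{\alpha(\gamma+1+\frac{d}{2})},
\end{equation*}
by Lemma~\ref{lem:BesselMoll}; squaring and integrating over an interval of length $N^{-2\alpha}$ yields $N^{\alpha(2\gamma+d)}$ for every $\gamma\ge0$, matching the far-field piece. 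With that replacement, your argument is correct.
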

By Sobolev embedding (see \eqref{eq:Bessel_embedding}), one can deduce from the previous proposition the following result, see \cite[Propositions~A.10]{Pisa}.
\begin{proposition}\label{KS_estimate_Lp_Brownian_integral}
	Let $p\geq 1$ and $m \geq 1$. For any $\eps >0$, there exists a positive constant $C= C_{T,d,m,\eps,A,p}$ such that for any $t \in [0,T]$ and $N \geq 1$,
	\begin{equation*}
	\big\Vert \Vert M^N_t\Vert_{L^p} \big\Vert_{L^m_\Omega} \leq C N^{-\frac{1}{2} \big( 1 -\alpha (d + \varkappa_p )\big) + \eps},
	\end{equation*}
	where $\varkappa_p \coloneqq \max \left(0, d \big(1 - \frac2p\big)\right).$
\end{proposition}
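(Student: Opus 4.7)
Following the hint in the paper, the strategy is to deduce the $L^p$-estimate directly from the Bessel-space estimate of Proposition~\ref{KS_estimate_Bessel_Brownian_integral} via the Sobolev embedding~\eqref{eq:Bessel_embedding}. The proof splits naturally into two cases according to whether $p \geq 2$ or $p \in [1,2)$.

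\emph{Main case $p \geq 2$.} Set $\gamma_p := \tfrac{d}{2}\bigl(1 - \tfrac{2}{p}\bigr) \geq 0$, so that $2\gamma_p = \varkappa_p$. Applying the embedding~\eqref{eq:Bessel_embedding} with smoothness $\beta = \gamma_p$, base exponent $2$ and target exponent $p \geq 2$ yields
\begin{equation*}
H^{\gamma_p}_{2}(\R^d) \hookrightarrow H^{\gamma_p - d(1/2-1/p)}_{p}(\R^d) = H^0_{p}(\R^d) = L^p(\R^d),
\end{equation*}
and hence $\Vert M^N_t \Vert_{L^p} \lesssim \Vert (I-\Delta)^{\gamma_p/2} M^N_t \Vert_{L^2}$. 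Taking $L^m_{\Omega}$-norms and invoking Proposition~\ref{KS_estimate_Bessel_Brownian_integral} with $\gamma = \gamma_p$ gives the rate $N^{-\frac{1}{2}(1 - \alpha(d + 2\gamma_p)) + \varepsilon} = N^{-\frac{1}{2}(1 - \alpha(d + \varkappa_p)) + \varepsilon}$, which is exactly the claim.

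\emph{Case $p \in [1,2)$.} Here $\varkappa_p = 0$, so the target rate coincides with the one already obtained for $p = 2$ above. Since the embedding $L^2 \not\hookrightarrow L^p$ fails on $\R^d$ in this range, I would rely instead on a localisation argument. For $R > 0$, split
\begin{equation*}
\Vert M^N_t \Vert_{L^p}^p = \int_{|x| \leq R} |M^N_t(x)|^p \, \dd x + \int_{|x| > R} |M^N_t(x)|^p \, \dd x ,
\end{equation*}
and bound the first integral by H\"older's inequality as $|B(0,R)|^{1-p/2}\Vert M^N_t \Vert_{L^2}^p$. For the tail, the It\^o isometry yields
\begin{equation*}
\E |M^N_t(x)|^2 = \frac{2}{N^2}\sum_{k \in \Lambda^N} \E \int_0^t \1_{\{k \in I_s\}} \big|e^{(t-s)\Delta}\nabla \theta^N(x-X^k_s)\big|^2\,\dd s ,
\end{equation*}
which decays in $|x|$ due to the Gaussian tails of the heat-smoothed kernel $e^{(t-s)\Delta}\nabla \theta^N(x - \cdot)$, together with the finite moments on the particle positions (Assumption~\ref{Assumptions_Init}) and the control on the total mass from Proposition~\ref{KS_prop_moment_k_number_ptcles}. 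Choosing $R$ polynomially in $N$ makes the tail integral negligible and preserves the rate coming from the $L^2$ bound.

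\textbf{Main obstacle.} The regime $p \geq 2$ is a one-line consequence of~\eqref{eq:Bessel_embedding}, so the real difficulty lies in the range $p \in [1,2)$. One must compensate the failure of $L^2 \hookrightarrow L^p$ on the whole space by exploiting the effective spatial localisation of the particle system: this requires keeping track of moments of $X^k_s$ (uniformly in $N$) and combining them with the Gaussian decay of $e^{(t-s)\Delta}\nabla\theta^N$, choosing $R$ carefully so that the trade-off between the local $L^2$ bound and the tail correction matches the target rate $N^{-\frac{1}{2}(1 - \alpha d) + \varepsilon}$.
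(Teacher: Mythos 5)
Your argument for $p\geq 2$ is exactly the paper's: Sobolev embedding of $H^{\gamma_p}_2$ into $L^p$ with $2\gamma_p=\varkappa_p$, then Proposition~\ref{KS_estimate_Bessel_Brownian_integral}. That part is correct and complete.

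For $p\in[1,2)$ your route diverges from the one the paper follows (here it simply cites the Pisa reference, but the same issue is handled in full for the Poisson analogue in the proof of Proposition~\ref{KS_estimate_Lp_Poisson_integral}). There, the obstacle ``$L^2\not\hookrightarrow L^1$ on $\R^d$'' is resolved not by truncating in space but by a polynomial weight: one writes $\Vert f\Vert_{L^1}\lesssim \Vert(1+|\cdot|)^{(d+1)/2}f\Vert_{L^2}$, passes to the weighted $L^2$-space, and applies the functional BDG inequality there. This avoids any $R$-optimisation and gives the rate directly, with the weight absorbed into the moment estimate of Lemma~\ref{Lemma_control_moment_ptcles} and the Gaussian decay of $e^{(t-s)\Delta}\nabla\theta^N$; the case $p\in(1,2)$ then follows by interpolation with $p=1$ and $p=2$. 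Your localisation idea is conceptually a sharp-cutoff version of the same weight trick, so it is not wrong in spirit, but as written it has two genuine gaps. First, the It\^o isometry only controls $\E|M^N_t(x)|^2$ pointwise in $x$; the proposition requires an $L^m_\Omega$ bound on $\Vert M^N_t\Vert_{L^p}$ for arbitrary $m\geq 1$, and you cannot push a power $m/p$ through $\E|M^N_t(x)|^2$. To salvage this you would need to first take $L^m_\Omega$-norms pointwise in $x$ (scalar BDG), then integrate over the tail via Minkowski's integral inequality (requiring $m\geq p$, harmless); neither step is mentioned. Second, the balance in $R$ is more delicate than ``polynomially in $N$'': the prefactor $|B(0,R)|^{1/p-1/2}\sim R^{d(1/p-1/2)}$ degrades the rate, so $R$ can grow at most like $N^{\eps'}$ for arbitrarily small $\eps'$; at that scale the tail decay relies on the interplay between the Gaussian decay of $e^{(t-s)\Delta}\nabla\theta^N$ and the \emph{all-order polynomial} moment bounds on the particles, and one must check that the resulting tail is $o(N^{-\frac12(1-\alpha d)})$. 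This is plausible but needs to be written out; the weighted-$L^2$ route sidesteps all of it in one line.
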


Let us now focus on the estimates of the Poisson stochastic integral $J^N$, which is the main difference in comparison with \cite{Pisa}. We first recall the following simple estimate, taken for example from \cite[Lemma 15]{FlandoliLeimbachOlivera}. 
\begin{lemma}
\label{lem:BesselMoll}
	For any $\gamma \geq 0$, there exists $C>0$ such that for all $N \geq 1$,
	\begin{equation*}
	\Vert (I-\Delta)^{\frac{\gamma}{2}} \theta^N \Vert_{L^2} \leq C N^{\alpha\left(\gamma + \frac{d}{2}\right)}.
	\end{equation*}
\end{lemma}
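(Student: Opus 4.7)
The plan is to prove this scaling estimate directly via the Fourier characterisation of the Bessel norm. Using Plancherel's identity together with the definition \eqref{eq:Besselnorm}, we have
\begin{equation*}
\Vert (I-\Delta)^{\gamma/2} \theta^N \Vert_{L^2}^2 = \int_{\R^d} (1+|\xi|^2)^\gamma |\widehat{\theta^N}(\xi)|^2 \, \dd \xi.
\end{equation*}
Since $\theta^N(x) = N^{\alpha d} \theta(N^\alpha x)$, a direct computation gives $\widehat{\theta^N}(\xi) = \hat{\theta}(N^{-\alpha}\xi)$.

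Then a change of variables $\eta = N^{-\alpha} \xi$ (so $\dd \xi = N^{\alpha d} \, \dd \eta$) yields
\begin{equation*}
\Vert (I-\Delta)^{\gamma/2} \theta^N \Vert_{L^2}^2 = N^{\alpha d} \int_{\R^d} (1+N^{2\alpha}|\eta|^2)^\gamma |\hat{\theta}(\eta)|^2 \, \dd \eta.
\end{equation*}
I would then use the elementary inequality $1 + N^{2\alpha} |\eta|^2 \leq (1 \vee N^{2\alpha}) (1+|\eta|^2) \leq N^{2\alpha}(1+|\eta|^2)$, valid for $N \geq 1$, to bound the integrand and factor out the $N$-dependence.

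This produces
\begin{equation*}
\Vert (I-\Delta)^{\gamma/2} \theta^N \Vert_{L^2}^2 \leq N^{\alpha(d + 2\gamma)} \int_{\R^d} (1+|\eta|^2)^\gamma |\hat{\theta}(\eta)|^2 \, \dd \eta = N^{\alpha(d+2\gamma)} \Vert \theta \Vert_{\gamma, 2}^2,
\end{equation*}
and $\Vert \theta \Vert_{\gamma,2}$ is finite since $\theta \in \CC_c^\infty(\R^d)$. Taking a square root delivers the claimed bound with $C = \Vert \theta \Vert_{\gamma,2}$. There is no real obstacle here: the whole argument is essentially a scaling computation, and the only point requiring mild care is the inequality bounding $(1 + N^{2\alpha}|\eta|^2)^\gamma$ uniformly in $\eta$ by a constant times $N^{2\alpha\gamma}(1+|\eta|^2)^\gamma$ when $N \geq 1$.
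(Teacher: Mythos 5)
Your proof is correct. The paper does not actually prove Lemma~\ref{lem:BesselMoll}; it simply cites it from \cite{FlandoliLeimbachOlivera}. Your Fourier-side scaling argument is the natural route and is exactly the computation one would expect for this standard mollifier estimate: Plancherel reduces the Bessel norm to a weighted $L^2$ integral of $\widehat{\theta^N}$, the scaling identity $\widehat{\theta^N}(\xi)=\widehat{\theta}(N^{-\alpha}\xi)$ combined with the change of variables $\eta=N^{-\alpha}\xi$ produces the factor $N^{\alpha d}$, and the pointwise bound $1+N^{2\alpha}|\eta|^2\leq N^{2\alpha}(1+|\eta|^2)$ for $N\geq1$ extracts the remaining $N^{2\alpha\gamma}$. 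The constant $C=\Vert\theta\Vert_{\gamma,2}$ (up to the Plancherel normalisation constant) is finite because $\theta\in\CC_c^\infty(\R^d)$. No gaps.
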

As for $M^N$, we start with the Bessel estimate.

\begin{proposition}\label{KS_estimate_Bessel_Poisson_integral}
	Let $\gamma \geq 0$ and $m = 2^n$, for some $n \in \N$. For any $\eps >0$, there exists a positive constant $C= C_{T,d,m,\eps,A,\gamma}$ such that for all $N \geq 1$,
	\begin{equation*}
	 \big\Vert \Vert (I-\Delta)^{\frac{\gamma}{2}} J^N_t \Vert_{L^2} \big\Vert_{L^m_{\Omega}} \leq C N^{-\frac12 \left( 1 -\alpha\left(d + 2\gamma - \frac{4}{m} \right)\right) + \eps}.
	\end{equation*}
\end{proposition}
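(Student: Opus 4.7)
The strategy parallels the Brownian case of Proposition~\ref{KS_estimate_Bessel_Brownian_integral}, replacing BDG for Wiener integrals by Kunita's BDG-type inequality for Hilbert-valued compensated Poisson integrals in $L^2(\R^d)$. A first crucial observation is that $(I-\Delta)^{\gamma/2}$ commutes with both the heat semigroup $e^{(t-s)\Delta}$ (both are Fourier multipliers) and with spatial translations, so that uniformly in $s$ and $X^k_{s^-}$,
$$\|(I-\Delta)^{\gamma/2} e^{(t-s)\Delta}\theta^N(\cdot - X^k_{s^-})\|_{L^2} \leq \|(I-\Delta)^{\gamma/2}\theta^N\|_{L^2} \leq C N^{\alpha(\gamma+d/2)},$$
using $L^2$-contraction of the heat semigroup together with Lemma~\ref{lem:BesselMoll}. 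This removes all dependence of the integrand's $L^2$ norm on the random particle positions, leaving only the Poisson noise and the indicator structure as random ingredients.

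For the base case $m=2$ (i.e.\ $n=1$), the $L^2$-isometry for compensated Poisson integrals yields
$$\E\|(I-\Delta)^{\gamma/2} J^N_t\|_{L^2}^2 = \frac{1}{N^2}\int_0^t \|(I-\Delta)^{\gamma/2}e^{(t-s)\Delta}\theta^N\|_{L^2}^2 \, \E\bigg[\sum_{k\in I_s}\big(\nu + \mu u^N_s(X^k_{s^-})\wedge A\big)\bigg]\dd s.$$
Using the uniform Bessel bound above, the local-density cutoff $u^N_s\wedge A \leq A$, and the mass bound $\E[\#I_s] = N \, \E[m^N_s] \leq CN$ from Proposition~\ref{KS_prop_moment_k_number_ptcles}, we obtain the rate $N^{-(1-\alpha(d+2\gamma))}$ for the second moment; this is even sharper than the stated rate for $m=2$.

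For $m = 2^n$ with $n \geq 2$, denote by $\Phi(s,k,\rho;t)$ the integrand of $J^N_t$ in \eqref{KS_Poisson_conv_integral_def}. Kunita's BDG-type inequality in $L^2(\R^d)$ gives
$$\E\|(I-\Delta)^{\gamma/2} J^N_t\|_{L^2}^m \lesssim \E\bigg(\int_0^t\int_{\Lambda^N}\int_{\R_+}\|(I-\Delta)^{\gamma/2}\Phi\|_{L^2}^2 \, \dd s\, m(\dd k)\,\dd\rho\bigg)^{m/2} + \E\int_0^t\int_{\Lambda^N}\int_{\R_+}\|(I-\Delta)^{\gamma/2}\Phi\|_{L^2}^m \, \dd s\, m(\dd k)\,\dd\rho.$$
The first (quadratic-variation) term is controlled as in the base case, this time invoking Proposition~\ref{KS_prop_moment_k_number_ptcles} with exponent $m/2$, yielding the rate $N^{-\frac{m}{2}(1-\alpha(d+2\gamma))}$. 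The second (jump) remainder yields a contribution of order $N^{-m(1-\alpha(\gamma+d/2))+1}$; taking the $m$-th root and comparing both contributions produces the announced exponent $-\frac{1}{2}(1-\alpha(d+2\gamma - 4/m))$, the $-4/m$ correction reflecting the loss induced by the jump remainder relative to the sharp quadratic rate.

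The main technical obstacle is the careful treatment of the Poisson jump remainder, which has no Brownian counterpart and requires both the deterministic control of the death-rate intensity provided by the cutoff $u^N_s \wedge A \leq A$, and the moment bounds on $\#I_s$ from Proposition~\ref{KS_prop_moment_k_number_ptcles}. The restriction to $m = 2^n$ is a purely technical convenience stemming from the dyadic iteration used in justifying Kunita's inequality for the stochastic convolution (where the integrand $\Phi$ depends on $t$), and could be removed by an interpolation argument.
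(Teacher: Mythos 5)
Your proposal has a genuine gap, and it comes from the very first step: the crude $L^2$-contraction bound
\[
\|(I-\Delta)^{\gamma/2} e^{(t-s)\Delta}\theta^N(\cdot - X^k_{s^-})\|_{L^2} \leq \|(I-\Delta)^{\gamma/2}\theta^N\|_{L^2} \lesssim N^{\alpha(\gamma+d/2)}
\]
is too lossy to reach the announced rate. The paper's proof instead factors the Bessel operator between the semigroup and the mollifier, writing
\[
\|(I-\Delta)^{\gamma/2} e^{(t-s)\Delta}\theta^N\|_{L^2}
\leq \big\|(I-\Delta)^{\frac{1-\eps}{m}}e^{(t-s)\Delta}\big\|_{L^2\to L^2}\,\big\|(I-\Delta)^{\frac{\gamma}{2}-\frac{1-\eps}{m}}\theta^N\big\|_{L^2}
\lesssim (t-s)^{-\frac{1-\eps}{m}}\,N^{\alpha\left(\gamma-\frac{2(1-\eps)}{m}+\frac{d}{2}\right)},
\]
exploiting the parabolic smoothing of $e^{(t-s)\Delta}$ to trade an integrable time singularity for a gain of $N^{-2\alpha(1-\eps)/m}$ in the power of $N$. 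This gain is precisely the source of the $-4/m$ correction in the exponent; without it, one cannot obtain it.

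Concretely, your quadratic-variation term yields the $L^m_\Omega$ rate $N^{-\frac{1}{2}(1-\alpha(d+2\gamma))}$, whose exponent is $-\frac{1}{2}+\alpha(\gamma+\frac{d}{2})$, while the stated rate has exponent $-\frac{1}{2}+\alpha(\gamma+\frac{d}{2})-\frac{2\alpha}{m}+\eps$. For $\eps<\frac{2\alpha}{m}$ your exponent is strictly \emph{larger}, so your bound is strictly \emph{weaker} than the proposition requires; it does not hold ``for any $\eps>0$''. (Your remark that the $m=2$ base-case bound is ``even sharper'' than the stated rate has the comparison reversed: $N^{-\frac{1}{2}(1-\alpha(d+2\gamma))}$ is \emph{larger} than $N^{-\frac{1}{2}(1-\alpha(d+2\gamma-2))+\eps}$ for small $\eps$, i.e.\ it is a worse upper bound.) Moreover, a direct computation shows that for $m>2$ your quadratic term \emph{dominates} your jump remainder, so ``comparing both contributions'' does not produce the announced exponent — it produces the unimproved $-\frac{1}{2}(1-\alpha(d+2\gamma))$. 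The jump remainder is not the bottleneck; the lossy treatment of the heat semigroup is. Your replacement of the paper's iterated functional BDG (Hausenblas, Cor.~2.14) by a two-term Kunita-type inequality is defensible in principle (it would follow by interpolating the intermediate dyadic terms), but that substitution does not cure the loss: within each term you still need the semigroup-vs.-mollifier splitting. As a smaller point, the stochastic Fubini step to commute $(I-\Delta)^{\gamma/2}$ inside the Poisson integral — which the paper cites explicitly — is implicitly assumed rather than justified in your write-up.
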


\begin{remark}
Note that the power of $N$ can be chosen to be negative for $\eps$ small enough if $ \gamma < \frac{1}{2\alpha} (1 - \alpha d)$. 
\end{remark}

\begin{proof}
	By the stochastic Fubini theorem for compensated Poisson integrals (see \cite[Theorem $1.1$]{Stochastic_Fubini}), we have 
	\begin{align*}
	&(I-\Delta)^{\frac{\gamma}{2}} J^N_t\\ 
	 &=\frac1N \int_0^t \int_{\Lambda^N} \int_{\R_+} \1_{\{k \in I_s\}} (I-\Delta)^{\frac{\gamma}{2}}e^{(t-s)\Delta}\theta^N(\cdot-X^k_{s^-}) (\1_{\{\rho < \nu\}} - \1_{\{\nu\leq \rho < \nu + \mu u^{N}_s(X^k_{s^-})\wedge A\}}) \NNN (\dd s,\dd k,\dd \rho).
	\end{align*}
	 Then, the functional BDG inequality for jump processes (see \cite[Corollary $2.14$]{Hausenblas}) ensures that there exists a constant $C>0$, which may change from line to line, depending only on $m$, $d$ and the cutoff parameter $A$, such that
\begin{align*}
	&\E \Vert (I-\Delta)^{\frac{\gamma}{2}} J^N_t\Vert_{L^2}^m \\ 
	&\leq C \sum_{l=1}^n  \E \Big|  \sum_{k \in \Lambda^N}\int_0^t  \int_0^{\nu +\mu A}  \Big\Vert \frac1N \1_{\{k \in I_s\}} (I-\Delta)^{\frac{\gamma}{2}}e^{(t-s)\Delta}\theta^N(\cdot-X^k_{s^-}) \\ 
	& \hspace{5,5cm} \times (\1_{\{\rho < \nu\}} - \1_{\{\nu\leq \rho < \nu + \mu u^{N}_s(X^k_{s^-})\wedge A\}}) \Big\Vert_{L^2}^{2^l}\, \dd s  \,\dd \rho \Big|^{2^{n-l}}
	\\ 
	&\leq \frac{C}{N^m} \sum_{l=1}^n  \E \Big| \sum_{k \in \Lambda^N}\int_0^t    \Big\Vert\1_{\{k \in I_s\}} (I-\Delta)^{\frac{\gamma}{2}}e^{(t-s)\Delta}\theta^N(\cdot-X^k_{s^-}) \Big\Vert_{L^2}^{2^l}\, \dd s  \Big|^{2^{n-l}}
	\\ 
	&\leq \frac{C}{N^m} \sum_{l=1}^n  \E\big[ (\mathfrak{N}^N_t)^{2^{n-l}}\big]
	\left(  \int_0^t \left\Vert (I-\Delta)^{\frac{1-\eps}{m}}e^{(t-s)\Delta} \right\Vert_{L^2 \to L^2}^{2^{l}} \left\Vert  (I-\Delta)^{\frac{\gamma}{2} - \frac{1-\eps}{m}}\theta^N \right\Vert_{L^2}^{2^l}\, \dd s  \right)^{2^{n-l}}.
	\end{align*}
Hence upon using \eqref{eq:Besov-heat-estimate} and Lemma~\ref{lem:BesselMoll}, it comes
	\begin{align*}
	\E \Vert (I-\Delta)^{\frac{\gamma}{2}} J^N_t\Vert_{L^2}^m 
	&\leq \frac{C}{N^m} \sum_{l=1}^n  \E\big[ (\mathfrak{N}^N_t)^{2^{n-l}}\big]
	\left(  \int_0^t (t-s)^{-(\frac{1-\eps}{m})2^{l}}  N^{2^l\alpha\left(\gamma - 2\frac{1-\eps}{m} + \frac{d}{2}\right)}\, \dd s  \right)^{2^{n-l}} 
	\\ 
	&\leq \frac{C}{N^m} \E\big[ (\mathfrak{N}^N_t)^{\frac{m}{2}}\big] N^{m\alpha\left(\gamma - 2\frac{1-\eps}{m} + \frac{d}{2}\right)}
	\\ 
	&\leq C N^{-\frac{m}{2} \left( 1 -\alpha\left(d + 2\gamma - 4\frac{1-\eps}{m}\right)\right)},
\end{align*}
where the last inequality follows from Proposition \ref{KS_prop_moment_k_number_ptcles}. We conclude because the preceding inequality is true for any $\eps >0$. 
\end{proof} 
Concerning the $L^p$ estimates, we have the following result.

\begin{proposition}\label{KS_estimate_Lp_Poisson_integral}
	Let us fix $p\geq 1$ and $m = 2^n$ for $n \geq 1$. For any $\eps >0$, there exists a positive constant $C= C_{T,d,n,\eps,A,p}$ such that for any $t \in [0,T]$ and $N \geq 1$,
	\begin{equation*}
	\big\Vert \Vert J^N_t\Vert_{L^p} \big\Vert_{L^m(\Omega)} \leq C N^{-\frac{1}{2} \left( 1 -\alpha\left(d + \varkappa_p - \frac{4}{m} \right)\right) + \eps},
	\end{equation*}
	where we recall that $\varkappa_p = \max \left(0, d \big(1 - \frac2p\big)\right).$
\end{proposition}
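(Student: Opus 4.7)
My plan is to mirror the deduction of the Brownian $L^p$ estimate Prop~\ref{KS_estimate_Lp_Brownian_integral} from its Bessel analogue Prop~\ref{KS_estimate_Bessel_Brownian_integral}, reducing everything to the already-proved Bessel estimate Prop~\ref{KS_estimate_Bessel_Poisson_integral} via functional embeddings. The split in the definition $\varkappa_p = \max(0, d(1 - \tfrac{2}{p}))$ reflects exactly this dichotomy: for $p \geq 2$ one pays a Sobolev cost $\gamma = d(\tfrac12 - \tfrac1p) = \varkappa_p/2$, whereas for $p \in [1, 2)$ no regularity is required and the bare $L^2$ rate should suffice.

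In the regime $p \geq 2$, I would set $\gamma := d(\tfrac{1}{2} - \tfrac{1}{p})$ and invoke the embedding $H^\gamma_2(\R^d) \hookrightarrow L^p(\R^d)$ given by \eqref{eq:Bessel_embedding}. This yields
\begin{align*}
\big\Vert \Vert J^N_t\Vert_{L^p} \big\Vert_{L^m(\Omega)} \lesssim \big\Vert \Vert (I-\Delta)^{\gamma/2} J^N_t\Vert_{L^2} \big\Vert_{L^m(\Omega)},
\end{align*}
and applying Prop~\ref{KS_estimate_Bessel_Poisson_integral} with this $\gamma$, noting that $2\gamma = \varkappa_p$, delivers exactly the desired exponent $-\tfrac{1}{2}(1 - \alpha(d + \varkappa_p - 4/m)) + \eps$.

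For $p \in [1, 2)$ one has $\varkappa_p = 0$, but $L^p(\R^d)$ does not embed into $L^2(\R^d)$, so I would resort to a spatial localization. For $R > 0$ to be optimised later,
\begin{align*}
\Vert J^N_t\Vert_{L^p} \leq |B_R|^{\frac{1}{p} - \frac{1}{2}}\,\Vert J^N_t\Vert_{L^2} + \Vert J^N_t\Vert_{L^p(B_R^c)},
\end{align*}
where the first contribution is handled by Prop~\ref{KS_estimate_Bessel_Poisson_integral} with $\gamma = 0$. For the tail $\Vert J^N_t\Vert_{L^p(B_R^c)}$, I would exploit the fact that the integrand $e^{(t-s)\Delta}\theta^N(\,\cdot - X^k_{s^-})$ has Gaussian decay in $|x - X^k_{s^-}|$ inherited from the heat semigroup, combined with high-order spatial moment bounds on $\langle\mu^N_s, |\cdot|^\kappa\rangle$ propagated by a Gronwall argument (using the bounded drift $F_A$ in \eqref{eq:IPS}, the fact that branching produces offspring at the mother's position, the control on the number of particles via Prop~\ref{KS_prop_moment_k_number_ptcles}, and the initial moment bound in Assumption~\ref{Assumptions_Init}). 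Choosing $R$ as a small power of $N$ then makes the tail contribution negligible relative to the main term. The main obstacle will be precisely this tail estimate: one must sum Gaussian tails over the order-$N$ living particles, absorbing the growing particle count and the moments of their positions into a factor that does not degrade the $N$-exponent, and the compensator part of $\widetilde{\mathcal{N}}$ has to be handled in parallel via a similar pointwise bound on the integrand.
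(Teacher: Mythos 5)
Your $p \geq 2$ case is exactly the paper's argument: embed $H^{\gamma}_2 \hookrightarrow L^p$ with $\gamma = d(\tfrac12 - \tfrac1p)$, so that $2\gamma = \varkappa_p$, and apply Proposition~\ref{KS_estimate_Bessel_Poisson_integral}. For $p \in [1,2)$, however, you and the paper diverge. You propose a ball/complement split $\Vert J^N_t\Vert_{L^p} \le |B_R|^{1/p-1/2}\Vert J^N_t\Vert_{L^2} + \Vert J^N_t\Vert_{L^p(B_R^c)}$, handle the ball via Proposition~\ref{KS_estimate_Bessel_Poisson_integral} with $\gamma=0$, and bound the tail by a brute-force pointwise estimate (not BDG) using the Gaussian decay of $e^{(t-s)\Delta}\theta^N$ and high spatial moment bounds of $\mu^N$, optimising $R$ as a small power of $N$. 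The paper instead inserts a fixed polynomial weight in a single step: $\Vert J^N_t\Vert_{L^1} \lesssim \Vert (1+|\cdot|)^{(d+1)/2}J^N_t\Vert_{L^2}$ by Cauchy--Schwarz (since $(1+|\cdot|)^{-(d+1)/2}\in L^2(\R^d)$), then applies the functional BDG inequality in this weighted $L^2$ norm, controlling the weight factor $\langle\mu^N_s,(1+|\cdot|)^{(d+1)2^{l-1}}\rangle$ via Lemma~\ref{Lemma_control_moment_ptcles}, and recovers $p\in(1,2)$ by interpolation between the $p=1$ and $p=2$ cases. Both routes rely on Lemma~\ref{Lemma_control_moment_ptcles}, which is the key technical ingredient you correctly identify. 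The conceptual difference is that the paper keeps the full martingale cancellation globally by pushing BDG into a weighted $L^2$ space, whereas you give it up in $B_R^c$ and recoup it through the super-polynomial smallness of the tail; this is viable but forces a two-parameter balance ($R=N^\sigma$ against the $|B_R|^{1/p-1/2}$ loss, with arbitrarily large moment order $\kappa$) and separate pointwise estimates for the jump and compensator parts of $\widetilde{\mathcal N}$. Your plan would close, but the paper's single weighted Cauchy--Schwarz step is shorter and sidesteps the localization entirely; you may want to note that the first step of your tail argument effectively re-derives a weighted estimate anyway, which is why the direct weighted formulation is the more economical choice.
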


Before proving Proposition \ref{KS_estimate_Lp_Poisson_integral}, we state and prove the following lemma on the control of the moments of the particles, which will be needed in the proof of Proposition~\ref{KS_estimate_Lp_Poisson_integral}. It somehow extends Proposition~\ref{KS_prop_moment_k_number_ptcles}, which would correspond to $\kappa=0$ below, with a supremum in time within the expectation.

\begin{lemma}\label{Lemma_control_moment_ptcles}
	Let $\kappa \geq 2$ and 
	$m\geq 2$, 
	and assume that \ $$\sup_{N \geq 1}\E \Big[\langle \mu^N_0, |\cdot|^{\kappa} \rangle^m\Big]  < +\infty.$$ 
	Then, we have 
\begin{equation*}
		\sup_{N \geq 1}\E \Big[\sup_{s \in [0,T]} \langle \mu^N_s, |\cdot|^{\kappa} \rangle^m\Big]  < +\infty.
	\end{equation*} 
\end{lemma}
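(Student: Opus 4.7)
The idea is to apply the weak formulation \eqref{weak_formulation_empirical_measure} with the test function $\phi(x) = |x|^\kappa$, which is $\CC^2(\R^d)$ for $\kappa \geq 2$. Because $\phi$ is unbounded, I would localise via the stopping time $\tau_R \coloneqq \inf\{t \geq 0 : Y^N_t \geq R\}$, where $Y^N_t \coloneqq \langle \mu^N_t, |\cdot|^\kappa\rangle$, carry out the estimate on $[0, t \wedge \tau_R]$, and pass to the limit $R \to \infty$ by Fatou at the end. With $\nabla \phi(x) = \kappa|x|^{\kappa-2} x$ and $\Delta \phi(x) = \kappa(\kappa+d-2)|x|^{\kappa-2}$, \eqref{weak_formulation_empirical_measure} decomposes $Y^N_t$ as $Y^N_0$ plus drift terms plus two martingales $M_t$ (Brownian) and $J_t$ (compensated Poisson). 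Using the boundedness of $F_A$ and the elementary bounds $|x|^{\kappa-1} + |x|^{\kappa-2} \leq C(1 + |x|^\kappa)$, all drift integrands are controlled by $C(m^N_s + Y^N_s)$; the logistic term $-\mu(u^N_s \wedge A)|x|^\kappa$ is non-positive and can be discarded.

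The delicate step is the control of the martingales, whose (predictable) quadratic variations involve higher moments of $|X^k|$:
\begin{equation*}
[M,M]_t = \frac{2\kappa^2}{N^2}\sum_{k \in \Lambda^N} \int_0^t \mathbbm{1}_{\{s \in L^k\}} |X^k_s|^{2\kappa-2}\, \dd s,
\end{equation*}
and analogously for $J$, with $|X^k|^{2\kappa}$ appearing in the predictable variation and $|X^k|^{m\kappa}$ in the $L^m$ correction of Kunita's BDG inequality. None of these quantities can be controlled pointwise by $|X^k|^\kappa$. The key observation is that the elementary inequality
\begin{equation*}
\sum_{k=1}^n a_k^q \leq \Big(\sum_{k=1}^n a_k\Big)^q \qquad (q \geq 1,~ a_k \geq 0),
\end{equation*}
applied to $a_k = |X^k_s|^\kappa$, yields $\sum_{k \in I_s} |X^k_s|^{2\kappa-2} \leq (N Y^N_s)^{(2\kappa-2)/\kappa}$, $\sum_{k \in I_s} |X^k_s|^{2\kappa} \leq (N Y^N_s)^2$ and $\sum_{k \in I_s} |X^k_s|^{m\kappa} \leq (N Y^N_s)^m$. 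Combined with the $N^{-2}$ (resp. $N^{-m}$) prefactors, this gives $[M,M]_t \leq C N^{-2/\kappa} \int_0^t (Y^N_s)^{(2\kappa-2)/\kappa}\, \dd s$ and $\langle J, J\rangle_t \leq C \int_0^t (Y^N_s)^2\, \dd s$, plus the analogous control of the Kunita $L^m$ term. Applying Doob's and Kunita's BDG inequalities, together with Young's inequality to absorb non-integer powers via $(Y^N_s)^{m(\kappa-1)/\kappa} \leq 1 + (Y^N_s)^m$, I would obtain
\begin{equation*}
\E \sup_{s \leq t \wedge \tau_R} |M_s|^m + \E \sup_{s \leq t \wedge \tau_R} |J_s|^m \leq C + C \int_0^t \E \sup_{u \leq s \wedge \tau_R} (Y^N_u)^m\, \dd s.
\end{equation*}

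Gathering the drift and martingale estimates, taking the supremum in time, raising to the $m$-th power and taking expectation, and invoking Proposition~\ref{KS_prop_moment_k_number_ptcles} to bound $\E (m^N_s)^m$ together with the hypothesis $\sup_N \E (Y^N_0)^m < \infty$, I arrive at
\begin{equation*}
\E \sup_{s \leq t \wedge \tau_R} (Y^N_s)^m \leq C + C \int_0^t \E \sup_{u \leq s \wedge \tau_R} (Y^N_u)^m\, \dd s,
\end{equation*}
with $C$ independent of $N$ and $R$. Grönwall's lemma produces a bound uniform in $R$, and Fatou as $R \to \infty$ concludes. The main obstacle, namely the mismatch between the exponent $\kappa$ in the test function and the higher exponents $2\kappa - 2$, $2\kappa$, $m\kappa$ arising in the martingale variations, is overcome by exploiting the $1/N$ normalisation of the empirical measure together with the trivial but crucial power-sum inequality $\sum a_k^q \leq (\sum a_k)^q$, which prevents any need to control moments higher than $\kappa$.
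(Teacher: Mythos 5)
Your proposal follows essentially the same route as the paper: both apply \eqref{weak_formulation_empirical_measure} to $\phi = |\cdot|^\kappa$, control the Brownian martingale by BDG and the Poisson martingale by Kunita's inequality, and overcome the appearance of higher powers $|X^k|^{2\kappa-2}$, $|X^k|^{2\kappa}$, $|X^k|^{m\kappa}$ with precisely the power-sum inequality $\sum a_k^q \le (\sum a_k)^q$ applied to $a_k = |X^k_s|^\kappa$, which the paper writes as $\langle \mu^N_s, |\cdot|^{2\kappa(m/2)^l}\rangle \le N^{2(m/2)^l-1}\langle\mu^N_s,|\cdot|^\kappa\rangle^{2(m/2)^l}$. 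The localisation via $\tau_R$ followed by Fatou is a sound technical addition that the paper leaves implicit; otherwise the argument and the key observation coincide.
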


\begin{proof}
	By \eqref{weak_formulation_empirical_measure} applied to $\phi = |\cdot|^\kappa$, we have that for any $t \in [0,T]$,
	\begin{align*}
		\langle \mu^N_t, |\cdot|^{\kappa} \rangle\notag 
		&= \langle \mu^N_0 , |\cdot|^{\kappa}  \rangle + \int_0^t \langle \mu^N_s, \Delta  |\cdot|^{\kappa}  \rangle \, \dd s  + \chi \int_0^t \langle \mu^N_s, F_A(K\ast u^N_s) \cdot \nabla  |\cdot|^{\kappa} \rangle \, \dd s \\\notag 
		&\quad + \int_0^t \langle \mu^N_s, (\nu  - \mu u^N_s\wedge A) |\cdot|^{\kappa} \rangle \, \dd s \\ \notag
		& \quad + \frac{\sqrt{2}}{N} \sum_{k \in \Lambda^N} \int_0^t  \1_{\{k \in I_s\}} \nabla  |\cdot|^{\kappa} (X^k_s) \cdot \, \dd B^k_s \\ \notag 
		&\quad + \frac1N \int_0^t \int_{\Lambda^N} \int_{\R_+} \1_{\{k \in I_s\}} |X^k_{s^-}|^{\kappa} \big(\1_{\{\rho < \nu\}} - \1_{\{\nu\leq \rho < \nu + \mu u^{N}_s(X^k_{s^-})\wedge A\}} \big)\, \NNN (\dd s,\dd k,\dd \rho).
	\end{align*}
	From Jensen's inequality, the BDG inequality for the Brownian integral and the Kunita inequality for the Poisson integral (see \cite[Theorem~4.4.23]{Applebaum}), it comes that
	\begin{align*}
		\E \sup_{v \in [0,t]}\langle \mu^N_v, |\cdot|^{\kappa} \rangle^m\notag 
		&\lesssim \E \langle \mu^N_0 , |\cdot|^{\kappa}  \rangle^m+ \int_0^t \E  |\langle \mu^N_s, \Delta  |\cdot|^{\kappa}  \rangle|^m \, \dd s
		  +  \int_0^t \E |\langle \mu^N_s, F_A(K\ast u^N_s) \cdot \nabla  |\cdot|^{\kappa} \rangle|^m \, \dd s \\
		  &\quad+ \int_0^t \E\langle \mu^N_s,|\cdot|^{\kappa} \rangle^m\, \dd s 
		 + \frac{1}{N^{m}} \E  \bigg[ \Big(\int_0^t \sum_{k \in I_s} |\nabla |\cdot|^{\kappa} |^2 (X^k_s) \, \dd s\Big)^{\frac{m}{2}} \bigg] \\
		& \quad+ \frac{1}{N^m} \sum_{l=0}^1 \E \bigg[ \Big( \int_0^t \sum_{k \in I_s} |X^k_s|^{2 \kappa (\frac{m}{2})^l} \, \dd s\Big)^{(\frac{m}{2})^{1-l}} \bigg] \\
		&\lesssim \E \langle \mu^N_0 , |\cdot|^{\kappa}  \rangle^m + \int_0^t \E  \langle \mu^N_s, |(\Delta  |\cdot|^{\kappa})| \rangle^m \, \dd s  
		 +  \int_0^t \E \langle \mu^N_s, \big| \nabla  |\cdot|^{\kappa} \big| \rangle^m \, \dd s \\ 
		 &\quad+ \int_0^t \E\langle \mu^N_s, |\cdot|^{\kappa} \rangle^m\, \dd s 
		 + \frac{1}{N^{\frac{m}{2}}} \int_0^t \E \langle \mu^N_s, \big| \nabla |\cdot|^\kappa \big|^m \rangle \, \dd s \\ 
		 &  \quad+ \frac{1}{N^m} \sum_{l=0}^1 N^{(\frac{m}{2})^{1-l}}  \int_0^t \E \langle \mu^N_s,  |\cdot|^{2 \kappa (\frac{m}{2})^l} \rangle^{(\frac{m}{2})^{1-l}} \, \dd s .
	\end{align*}
Using that $2 (\frac{m}{2})^l \geq 2$, it comes
\begin{equation*}
\langle \mu^N_s, |\cdot|^{2 \kappa (\frac{m}{2})^l} \rangle \leq \frac{1}{N} \Big(\sum_{k \in \Lambda^N} \1_{\{k \in I_s\}} |X^k_s|^{\kappa} \Big)^{2 (\frac{m}{2})^l} = N^{2 (\frac{m}{2})^l -1} \langle \mu^N_s, |\cdot|^{\kappa}\rangle^{2 (\frac{m}{2})^l},
\end{equation*}
hence we obtain 
	\begin{align*}
	\E \sup_{v \in [0,t]}\langle \mu^N_v, |\cdot|^{\kappa} \rangle^m
	&\lesssim \E \langle \mu^N_0 , |\cdot|^{\kappa}  \rangle^m + \int_0^t \E  \langle \mu^N_s, \big|\Delta  |\cdot|^{\kappa} \big| \rangle^m \, \dd s  
	+  \int_0^t \E \langle \mu^N_s, \big| \nabla  |\cdot|^{\kappa} \big| \rangle^m \, \dd s \\ 
	&\quad + \int_0^t \E\langle \mu^N_s,|\cdot|^{\kappa} \rangle^m\, \dd s 
	 + \frac{1}{N^{\frac{m}{2}}} \int_0^t \E \langle \mu^N_s, \big| \nabla |\cdot|^\kappa \big|^m \rangle \, \dd s \\ 
	 &  \quad+ \frac{1}{N^m} \sum_{l=0}^1 N^{(\frac{m}{2})^{1-l} +2 (\frac{m}{2})^l -1}  \int_0^t \E \langle \mu^N_s,  |\cdot|^{\kappa} \rangle^{2 (\frac{m}{2})^l} \, \dd s  . 
\end{align*}
Since  $(\frac{m}{2})^{1-l} +2 (\frac{m}{2})^l -1 \leq m$ for $l\in \{0,1\}$ and $m\geq 2$, it follows that
	\begin{align*}
	\E \sup_{v \in [0,t]}\langle \mu^N_v, |\cdot|^{\kappa} \rangle^m
	&\lesssim \E \langle \mu^N_0 , |\cdot|^{\kappa}  \rangle^m + \int_0^t \E  \langle \mu^N_s, \big|\Delta  |\cdot|^{\kappa} \big| \rangle^m \, \dd s  
	+  \int_0^t \E \langle \mu^N_s, \big| \nabla  |\cdot|^{\kappa} \big| \rangle^m \, \dd s \\
	&\quad + \int_0^t \E\langle \mu^N_s,|\cdot|^{\kappa} \rangle^m\, \dd s  
	 + \frac{1}{N^{\frac{m}{2}}} \int_0^t \E \langle \mu^N_s, \big| \nabla |\cdot|^\kappa \big|^m \rangle \, \dd s . 
\end{align*}
	By explicit computation of the gradient and the Laplacian of $|\cdot|^{\kappa}$ and by Proposition~\ref{KS_prop_moment_k_number_ptcles}, we deduce that 
	\begin{align*}
		\E \sup_{v \in [0,t]}\langle \mu^N_v, |\cdot|^{\kappa} \rangle^m\notag  
		&\lesssim 1+ \E \langle \mu^N_0 , |\cdot|^{\kappa}  \rangle^m +  \int_0^t \E\langle \mu^N_s,|\cdot|^{\kappa} \rangle^m\, \dd s  .
	\end{align*}
	We conclude the proof by Grönwall's inequality. 
\end{proof}

\begin{proof}[Proof of Proposition \ref{KS_estimate_Lp_Poisson_integral}]
	When $p \geq 2$, the proof follows from the continuous embedding of $H^{\gamma}_{2}$ into $L^p$ when $\gamma > \frac{d}{2} - \frac{d}{p}$, see \eqref{eq:Bessel_embedding}, and from Proposition~\ref{KS_estimate_Bessel_Poisson_integral}. Let us now focus on the case $p=1$ with $m = 2^n$, the case $p \in (1,2)$ will follow by interpolation. We emphasise that the proof of Proposition~\ref{KS_estimate_Bessel_Poisson_integral} does not carry over here because the functional BDG used therein does not hold in $L^1(\R^d)$. To bypass this difficulty, we will go from $L^1(\R^d)$ to $L^2(\R^d)$ by introducing weights.
	Hence using Cauchy-Schwarz's inequality, it holds	
	\begin{align*}
		\E \Vert J^N_t \Vert_{L^1}^m &\leq C\, \E \Vert (1+ \vert\cdot \vert)^{\frac{d+1}{2}} J^N_t \Vert_{L^2}^m \\ 
		&= C\, \E \bigg\Vert \frac1N \int_0^t \int_{\Lambda^N} \int_{0}^{\nu +\mu A} (1+ \vert\cdot \vert)^{\frac{d+1}{2}}\1_{\{k \in I_s\}} e^{(t-s)\Delta}\theta^N(\cdot-X^k_{s^-})  \\ 
		& \hspace{4cm} \times (\1_{\{\rho < \nu\}} - \1_{\{\nu\leq \rho < \nu + \mu u^{N}_s(X^k_{s^-})\wedge A\}})\, \NNN (\dd s,\dd k,\dd \rho) \bigg\Vert_{L^2}^m.
	\end{align*}
	Using the functional BDG inequality for Poisson measures \cite[Corollary $2.14$]{Hausenblas}, it follows that
	\begin{align*}
		\E \Vert J^N_t \Vert_{L^1}^m 
		 &\lesssim \sum_{l=1}^n  \E \bigg(  \sum_{k \in \Lambda^N}\int_0^t  \int_0^{\nu +\mu A}  \left\Vert \frac1N  (1+ \vert\cdot \vert)^{\frac{d+1}{2}} \1_{\{k \in I_s\}} e^{(t-s)\Delta}\theta^N(\cdot-X^k_{s^-}) \right. \\ 
		 & \hspace{4.5cm} \times \left. \big(\1_{\{\rho < \nu\}} - \1_{\{\nu\leq \rho < \nu + \mu u^{N}_s(X^k_{s^-})\wedge A\}}\big) \right\Vert_{L^2}^{2^l}\, \dd s  \,\dd \rho \bigg)^{2^{n-l}} \\ 
		 &\lesssim \frac{1}{N^m} \sum_{l=1}^n  \E \Big(  \sum_{k \in \Lambda^N}\int_0^t    \left\Vert   (1+ \vert\cdot \vert)^{\frac{d+1}{2}} \1_{\{k \in I_s\}} e^{(t-s)\Delta}\theta^N(\cdot-X^k_{s^-}) \right\Vert_{L^2}^{2^l}\, \dd s   \Big)^{2^{n-l}} .
	\end{align*}
	By the inequality $(1+ |x+y|) \leq (1+|x|)(1+|y|)$ and Fubini's Theorem, we obtain  
	\begin{align*}
		&\E \Vert J^N_t \Vert_{L^1}^m  \nonumber \\
		&\lesssim \frac{1}{N^m} \sum_{l=1}^n  \E \Big( \sum_{k \in \Lambda^N}\int_0^t   (1+ \vert X^k_{s^-} \vert)^{(d+1)2^{l-1}} \1_{\{k \in I_s\}}   \left\Vert   (1+ \vert \cdot \vert)^{\frac{d+1}{2}} e^{(t-s)\Delta}\theta^N \right\Vert_{L^2}^{2^l}\, \dd s \Big)^{2^{n-l}}  \nonumber \\ 
		& \lesssim \frac{1}{N^m} \sum_{l=1}^n N^{2^{n-l}} \E \sup_{s \in [0,t]} |\langle \mu^N_s, (1+ |\cdot|)^{(d+1)2^{l-1}} \rangle|^{2^{n-l}} \left( \int_0^t \left\Vert   (1+ \vert \cdot \vert)^{\frac{d+1}{2}} e^{(t-s)\Delta}\theta^N \right\Vert_{L^2}^{2^l} \, \dd s \right)^{2^{n-l}} .
	\end{align*}
	Lemma \ref{Lemma_control_moment_ptcles} yields
	\begin{align}\label{KS_proof_L1_eq2}
		\E \Vert J^N_t \Vert_{L^1}^m 
		& \lesssim \frac{1}{N^m} \sum_{l=1}^n N^{2^{n-l}} \left( \int_0^t \big\Vert (1+ \vert \cdot \vert)^{\frac{d+1}{2}} e^{(t-s)\Delta}\theta^N \big\Vert_{L^2}^{2^l} \, \dd s \right)^{2^{n-l}}.
	\end{align}
	Let us consider the $L^2$ term in \eqref{KS_proof_L1_eq2}. By a change of variable and Fubini's Theorem, we have
	\begin{align*}
		\int_{\R^d} (1+|y|)^{d+1} |e^{(t-s)\Delta}\theta^N (y)|^2 \, \dd y 
		&= \int_{\R^d} (1+|y|)^{d+1} \left\vert \int_{\R^d} g_{2(t-s)}(y-x) \theta^N(x)  \, \dd x \right\vert^2 \, \dd y \\
		& \leq \int_{\R^d} (1+|y|)^{d+1}  \int_{\R^d} g_{2(t-s)}(y-x) |\theta^N(x)|^2  \, \dd x  \, \dd y\\
		& =  N^{\alpha d} \int_{\R^d} (1+|y|)^{d+1}  \int_{\R^d} g_{2(t-s)}\big(y-\frac{z}{N^{\alpha}}\big) |\theta(z)|^2  \, \dd z  \, \dd y
		\\& = N^{\alpha d} \int_{\R^d} |\theta(z)|^2   \int_{\R^d} g_{2(t-s)}(y)\left(1+\left\vert y+\frac{z}{N^{\alpha}}\right\vert\right)^{d+1}   \, \dd y \, \dd z \\ &\lesssim N^{\alpha d}  \int_{\R^d} |\theta(z)|^2   \int_{\R^d} g_{2(t-s)}(y)\left(1+ |y|^{d+1} +\left\vert \frac{z}{N^{\alpha}}\right\vert^{d+1} \right)\, \dd y \, \dd z \\ 
		&\lesssim N^{\alpha d} (1 + (t-s)^{\frac{d+1}{2}} + N^{-\alpha(d+1)}) \\ 	
		&\lesssim N^{\alpha d}.
	\end{align*}
	Following the same lines but using Jensen's inequality for the probability measure $\theta^N \, \dd x$, we obtain that 
	\begin{align*}
		\int_{\R^d} (1+|y|)^{d+1} |e^{(t-s)\Delta}\theta^N (y)|^2 \, \dd y 
		& \leq \int_{\R^d} (1+|y|)^{d+1}  \int_{\R^d} |g_{2(t-s)}(y-x)| \theta^N(x)  \, \dd x  \, \dd y \\
		& = \int_{\R^d} (1+|y|)^{d+1} \int_{\R^d} \frac{1}{(t-s)^{\frac{d}{2}}} (g_1(z))^2 \theta^N(y- \sqrt{t-s}\,z) \, \dd z\, \dd y \\ 
		&\lesssim \frac{1}{(t-s)^{\frac{d}{2}}} \int_{\R^d} (1 + |y|^{d+1} + (\sqrt{t-s}|z|)^{d+1}) (g_1(z))^2 \theta^N(y) \,\dd z\, \dd y \\ 
		&\lesssim  (t-s)^{-\frac{d}{2}}.
	\end{align*}
	By interpolation of the two preceding estimates, we find that for any $\delta \in [0,1]$,
	 \begin{equation*}
	 \int_{\R^d} (1+|y|)^{d+1} |e^{(t-s)\Delta}\theta^N (y)|^2 \, \dd y \lesssim (t-s)^{-\delta\frac{d}{2}}N^{\alpha d(1-\delta)}.
	 \end{equation*}
Let $l \in \{1,\dots n\}$. Plugging the previous estimate in \eqref{KS_proof_L1_eq2}, we obtain that for any $\eps\in (0,1)$, choosing $\delta = \frac{4}{d2^l}(1-\eps)$, there exists $C\equiv C_{\eps}>0$ such that
	\begin{align*}
		\E \Vert J^N_t \Vert_{L^1}^m  
		&\leq \frac{C}{N^m} \sum_{l=1}^n N^{2^{n-l}} N^{\alpha d\left(1 - \frac{4}{d2^l}(1-\eps)\right)2^{l-1} 2^{n-l}} \\ 
		&\leq \frac{C}{N^m} N^{\frac{m}{2}} N^{\alpha d \left(1 - \frac{4}{d m}(1-\eps)\right) \frac{m}{2}}.
	\end{align*}
Thus, for any $\eps >0$ there exists $C>0$ such that for any $t \in [0,T]$,\begin{equation*}
	 \big\Vert \Vert J^N_t \Vert_{L^1}\big\Vert_{L^m_{\Omega}} 
	\leq CN^{-\frac12 \left(1 - \alpha (d - \frac{4}{m})\right) + \eps }.
\end{equation*}
\end{proof}

\subsection{\emph{A priori} bounds on the mollified empirical measure}
\label{subsec:apriorimollempmeas}

In this section, we provide an \emph{a priori} bound on the mollified empirical measure $u^N$, see Proposition~\ref{Prop_bounds_Bessel_norm_mollified}. Beforehand, we state the following Gr\"onwall's Lemma with singular convolution kernels, taken from \cite{Webb} (see Corollary 3.3 therein), which will be useful several times in the following. We recall it here for the reader's convenience.
\begin{lemma}\label{lem:Gronwall}
Let $\beta_{1},\beta_{2}\geq 0$ such that $\beta_{1}+\beta_{2}<1$ and let $a,b$ be bounded measurable, non-negative functions. Assume that a bounded measurable function $f:[0,T] \to \R_{+}$ satisfies
\begin{equation*}
f(t) \leq a(t) + b(t) \int_{0}^t s^{-\beta_{1}} (t-s)^{-\beta_{2}}\, f(s)\, \dd s \quad \text{for a.e.}~ t\in [0,T].
\end{equation*}
Then there exists $C=C_{\beta_{1},\beta_{2}}>0$ such that
\begin{equation*}
f(t) \leq C \esssup_{s\leq t} a(s)\ \exp\big(C \esssup_{s\leq t} b(s)\, t^{1-\beta_{1}} \big) \quad \text{for a.e.}~ t\in [0,T].
\end{equation*}
\end{lemma}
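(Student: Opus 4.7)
The plan is to iterate the singular integral inequality and identify the resulting series with a Mittag--Leffler-type function, which grows at most exponentially. First I would replace $a$ and $b$ by their non-decreasing envelopes $A(t) \coloneqq \esssup_{s \leq t} a(s)$ and $B(t) \coloneqq \esssup_{s \leq t} b(s)$, so that the hypothesis reduces to the monotonised pointwise inequality
\begin{equation*}
f(t) \leq A(t) + B(t) \int_0^t s^{-\beta_1} (t-s)^{-\beta_2}\, f(s)\, \dd s \quad \text{for a.e.}~ t \in [0,T].
\end{equation*}
Setting $T\phi(t) \coloneqq B(t)\int_0^t s^{-\beta_1}(t-s)^{-\beta_2}\phi(s)\,\dd s$, this reads $f \leq A + Tf$, and iterating $n$ times yields $f \leq \sum_{k=0}^{n-1} T^k A + T^n f$.

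The core step is to estimate the iterates $T^k A$ explicitly. Using the Beta-function identity $\int_0^t s^{p-1}(t-s)^{q-1}\, \dd s = \mathrm{B}(p,q)\, t^{p+q-1}$ and monotonicity of $A,B$, a direct induction on $k$ gives
\begin{equation*}
T^k A(t) \leq A(t)\, B(t)^k\, t^{k(1-\beta_1-\beta_2)}\, \prod_{j=0}^{k-1} \mathrm{B}\big(1-\beta_1 + j(1-\beta_1 - \beta_2),\, 1-\beta_2\big),
\end{equation*}
because each application of $T$ to a power function $s \mapsto s^\gamma$ produces, up to the factor $B(t)$, a new power function with exponent $\gamma + 1-\beta_1-\beta_2$ and a Beta-function prefactor. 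Expanding each $\mathrm{B}(p,q) = \Gamma(p)\Gamma(q)/\Gamma(p+q)$ and using the asymptotic $\Gamma(x+c)/\Gamma(x) \sim x^c$ as $x \to \infty$, the product can be bounded by $(C_{\beta_1,\beta_2})^{k} / \Gamma(1 + k(1-\beta_1-\beta_2))$ up to a sub-exponential factor in $k$.

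Setting $\alpha \coloneqq 1-\beta_1-\beta_2 \in (0,1]$, which is strictly positive by hypothesis, summing in $k$ yields $\sum_k T^k A(t) \leq C\, A(t)\, E_\alpha\big(C\, B(t)\, t^{\alpha}\big)$, where $E_\alpha$ denotes the one-parameter Mittag--Leffler function. The classical asymptotic $E_\alpha(y) \leq C_\alpha e^{C_\alpha y^{1/\alpha}}$ for $y \geq 0$ then gives an exponential control of the form $\exp(C\, B(t)^{1/\alpha}\, t)$. Using the boundedness of $b$ on $[0,T]$ to rewrite this exponent as $C\, B(t)\, t^{1-\beta_1}$ modulo an adjustment of the multiplicative constant produces the stated bound. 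The tail term $T^n f(t)$ vanishes as $n \to \infty$ from the boundedness of $f$ and the super-exponential decay of $1/\Gamma(1+n\alpha)$.

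The main obstacle is the careful bookkeeping of the product of Beta functions and the manipulation needed to recognise the Mittag--Leffler structure and match the precise form $t^{1-\beta_1}$ appearing in the statement. The strict inequality $\beta_1 + \beta_2 < 1$ is crucial at two places: it keeps each Beta function finite at every step of the iteration, and it makes $\alpha > 0$, which is needed for the series defining $E_\alpha$ to converge and for the Gamma denominators to grow super-exponentially.
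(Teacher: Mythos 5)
The paper does not prove this lemma --- it is imported from \cite{Webb} (Corollary~3.3) --- so there is no in-text argument to compare against; I evaluate your proposal on its own terms. Your iteration--Mittag--Leffler strategy is the natural route and almost certainly mirrors the cited argument.

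Two points deserve attention. First, the bound $\prod_{j=0}^{k-1}\mathrm B\bigl(1-\beta_1+j\alpha,\,1-\beta_2\bigr)\le C^k/\Gamma(1+k\alpha)$, with $\alpha=1-\beta_1-\beta_2$, is true but needlessly lossy: the product telescopes cleanly only when $\beta_1=0$ (the denominator of the $j$-th factor is $\Gamma\bigl(1+(j+1)\alpha\bigr)$, which matches the next numerator $\Gamma\bigl(1-\beta_1+(j+1)\alpha\bigr)$ only then), and using $\mathrm B(p,q)\sim\Gamma(q)\,p^{-q}$ as $p\to\infty$ gives the sharper estimate $\asymp C^k/(k!)^{1-\beta_2}$. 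Your version of the bound yields the exponent $B(t)^{1/\alpha}\,t$, while the sharper one yields $B(t)^{1/(1-\beta_2)}\,t^{\alpha/(1-\beta_2)}$; the latter coincides with the claimed $B(t)\,t^{1-\beta_1}$ precisely when $\beta_2=0$, and is strictly better otherwise.

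Second, and more importantly, your final step --- rewriting $\exp\bigl(CB(t)^{1/\alpha}t\bigr)$ as $\exp\bigl(CB(t)\,t^{1-\beta_1}\bigr)$ ``using the boundedness of $b$'' --- does not preserve the form of the constant. Since $1/\alpha>1$ whenever $\beta_1+\beta_2>0$, the inequality $B(t)^{1/\alpha}t\le C'B(t)\,t^{1-\beta_1}$ on $[0,T]$ forces $C'$ to depend on $\|b\|_{L^\infty}$ and on $T$, not merely on $(\beta_1,\beta_2)$ as the statement asserts. One can see that this is not an artefact of your estimates: for $\beta_1=0$, $a\equiv1$, $b\equiv b_0$ the exact fixed point is $f(t)=E_{1-\beta_2}\bigl(b_0\Gamma(1-\beta_2)\,t^{1-\beta_2}\bigr)\sim\exp\bigl((b_0\Gamma(1-\beta_2))^{1/(1-\beta_2)}\,t\bigr)$, whose exponent is super-linear in $b_0$ when $\beta_2>0$, so a $(\beta_1,\beta_2)$-only constant in front of $b_0\,t^{1-\beta_1}$ cannot dominate it uniformly. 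For the paper's applications (fixed $T$, fixed $b$) this extra dependence is harmless, but as written your last step is a genuine gap in reproducing the stated dependence of $C$; you should either track the $T$- and $\|b\|_{L^\infty}$-dependence explicitly, or keep the exponent in the Mittag--Leffler form.
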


\begin{proposition}
\label{Prop_bounds_Bessel_norm_mollified}
Let $r\in (d,+\infty)$ and $\gamma_{0} \in [0,1)$ as in Assumption \ref{Assumptions_Init}, \emph{i.e.} such that $\sup_{N\geq 1}  \E \Vert u^N_0 \Vert_{\gamma_{0},r}^m <\infty$ for any $m\geq 1$. 
Let $\eta \geq 0$, $\bar\eta \coloneqq (\eta-\gamma_{0})\vee 0$ and $p \in [r,+\infty]$ such that the following conditions hold:
\begin{equation*}
0 < \alpha < \frac{1}{2\big( d +  \eta - \frac{d}{p} \big)} \quad \text{and} \quad   1+\eta + \bar\eta + d\Big(\frac{1}{r}-\frac{1}{p}\Big) <2.
\end{equation*}
Then we have, for any $n\in \N$ and $m=2^n$,
	\begin{equation*}
	\sup_{N \geq 1}\sup_{t \in [0,T]} (1\wedge t)^{\frac{1}{2} \left( \bar\eta + d(\frac{1}{r}-\frac{1}{p}) \right)}  \big\Vert  u^N_t \big\Vert_{L^m(\Omega; H^\eta_{p})}  < +\infty.
	\end{equation*}
\end{proposition}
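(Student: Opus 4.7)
I would start from the mild equation \eqref{KS_mollified_empirical_mesure_equation_mild}, apply the operator $(I-\Delta)^{\eta/2}$, take $L^p(\R^d)$ norms in space and $L^m_\Omega$ norms in probability, and estimate each of the five resulting terms. The goal is to obtain a singular Gr\"onwall inequality for $g_N(t) \coloneqq \big\| \Vert u^N_t\Vert_{\eta,p}\big\|_{L^m_\Omega}$, with the weighted quantity $f_N(t) \coloneqq (1\wedge t)^{\beta_1} g_N(t)$ where $\beta_1\coloneqq\tfrac{1}{2}(\bar\eta+d(\tfrac{1}{r}-\tfrac{1}{p}))$, then conclude via Lemma~\ref{lem:Gronwall}.

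\emph{Initial and deterministic terms.} Writing $(I-\Delta)^{\eta/2}e^{t\Delta}u^N_0 = e^{t\Delta}(I-\Delta)^{\bar\eta/2}(I-\Delta)^{\gamma_0/2}u^N_0$ (with the convention $(I-\Delta)^{\bar\eta/2}=\mathrm{Id}$ when $\eta\le\gamma_0$), the heat estimate \eqref{eq:Besov-heat-estimate} yields
$$\|e^{t\Delta}u_0^N\|_{\eta,p} \lesssim (1\wedge t)^{-\beta_1}\|u^N_0\|_{\gamma_0, r},$$
and the $L^m_\Omega$ norm of the right-hand side is bounded uniformly in $N$ by Assumption~\ref{Assumptions_Init}. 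For the chemotactic drift, the key observation is that $|F_A(K\ast u^N_s)|\le A+1$ pointwise and $\theta^N\ge 0$, so
$$\big|\langle \mu^N_s, F_A(K\ast u^N_s)(\cdot)\theta^N(x-\cdot)\rangle\big|\le (A+1)\, u^N_s(x),$$
and similarly $|\langle \mu^N_s,(\nu-\mu u^N_s\wedge A)\theta^N(x-\cdot)\rangle| \le (\nu + \mu A) u^N_s(x)$. Combining with \eqref{eq:Besov-heat-estimate} (with $n=1$ for the drift and $n=0$ for the logistic term) and the embedding $H^\eta_p\hookrightarrow L^p$, these two integral terms contribute at most
$$C_A \int_0^t (1\wedge(t-s))^{-\frac{\eta+1}{2}} g_N(s)\,\dd s.$$

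\emph{Stochastic integrals.} Since $p\ge r>d\ge 2$, by \eqref{eq:Bessel_embedding},
$$\|M^N_t\|_{\eta,p} \lesssim \big\|(I-\Delta)^{\frac{1}{2}(\eta+d(\frac{1}{2}-\frac{1}{p}))}M^N_t\big\|_{L^2},$$
and similarly for $J^N_t$. Applying Propositions~\ref{KS_estimate_Bessel_Brownian_integral} and~\ref{KS_estimate_Bessel_Poisson_integral} with $\gamma=\eta+d(\tfrac{1}{2}-\tfrac{1}{p})$, the hypothesis $\alpha<\tfrac{1}{2(d+\eta-d/p)}$ ensures that the resulting exponents of $N$ are non-positive (for $m=2^n$ large enough; smaller moments follow from Jensen), hence the stochastic terms contribute at most a constant uniformly in $t\in[0,T]$ and $N$.

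\emph{Conclusion via singular Gr\"onwall.} Gathering these estimates and multiplying by $(1\wedge t)^{\beta_1}$, we obtain an inequality
$$f_N(t)\le C + C\int_0^t (t-s)^{-\beta_2}s^{-\beta_1}f_N(s)\,\dd s,$$
with $\beta_2\coloneqq(\eta+1)/2$. The proposition's assumption $1+\eta+\bar\eta+d(\tfrac{1}{r}-\tfrac{1}{p})<2$ is precisely equivalent to $\beta_1+\beta_2<1$, so Lemma~\ref{lem:Gronwall} applies and yields $\sup_{N,\, t\in[0,T]}f_N(t)<\infty$. The main delicate points will be balancing the time singularities so that the Gr\"onwall condition matches exactly the hypothesis, cleanly handling the two cases $\eta\le\gamma_0$ and $\eta>\gamma_0$ through $\bar\eta$, and routing the stochastic bounds, originally in $L^2$-Bessel norms, through Sobolev embedding to $H^\eta_p$ without wasting the admissible range of $\alpha$.
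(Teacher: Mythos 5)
Your proposal follows essentially the same route as the paper's proof: start from the mild SPDE \eqref{KS_mollified_empirical_mesure_equation_mild}, apply $(I-\Delta)^{\eta/2}$, use the Gaussian estimate \eqref{eq:Besov-heat-estimate} for the free, drift and logistic terms, bound the drift and logistic brackets pointwise by a constant times $u^N_s(x)$, pass from $\Vert u^N_s\Vert_{L^p}$ to $\Vert u^N_s\Vert_{\eta,p}$ by the embedding $H^\eta_p\hookrightarrow L^p$, route the stochastic integrals through the Sobolev embedding $H^{\gamma}_{2}\hookrightarrow H^\eta_p$ with $\gamma=\eta+d(\frac12-\frac1p)$ and Propositions~\ref{KS_estimate_Bessel_Brownian_integral}, \ref{KS_estimate_Bessel_Poisson_integral}, and close with the singular Gr\"onwall Lemma~\ref{lem:Gronwall}. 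The only (immaterial) difference is presentational: you take the $L^m_\Omega$ norm at the very start and work with the deterministic quantity $g_N(t)$, whereas the paper defines the random weighted norm $U(t)=(1\wedge t)^{\beta_1}\Vert u^N_t\Vert_{\eta,p}$ and passes to $\lVert U(t)\rVert_{L^m_\Omega}$ only at the end; by Minkowski's integral inequality the two orderings are equivalent. Your bookkeeping of the exponents — in particular the verification that the stochastic-term exponent $d+2\gamma = 2(d+\eta-\tfrac dp)$ matches the hypothesis on $\alpha$, and that $\beta_1+\beta_2<1$ is exactly the stated condition — is correct.
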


\begin{proof}
Define $U(t) \coloneqq (1\wedge t)^{\frac{1}{2} \left( \bar\eta + d(\frac{1}{r}-\frac{1}{p}) \right)} \Vert(I - \Delta)^{\frac{\eta}{2}}u^N_t \Vert_{L^p}$. 
Applying the operator $(I - \Delta)^{\frac{\eta}{2}}$ to the mild formulation \eqref{KS_mollified_empirical_mesure_equation_mild}, we obtain that
\begin{align}\label{eq:decomp-Ut}
	U(t) 
	&\leq (1\wedge t)^{\frac{1}{2} \left( \bar\eta + d(\frac{1}{r}-\frac{1}{p}) \right)} \big\Vert(I - \Delta)^{\frac{\eta}{2}} e^{t\Delta}u^N_0 \big\Vert_{L^p} \nonumber\\
	&\quad + (1\wedge t)^{\frac{1}{2} \left( \bar\eta + d(\frac{1}{r}-\frac{1}{p}) \right)}  \chi \left\Vert \int_0^t (I - \Delta)^{\frac{\eta}{2}} \nabla \cdot e^{(t-s)\Delta} \langle \mu^N_s, F_A(K\ast u^N_s(\cdot)) \theta^N(x-\cdot) \rangle \, \dd s \right\Vert_{L^p} \nonumber\\
	&\quad + (1\wedge t)^{\frac{1}{2} \left( \bar\eta + d(\frac{1}{r}-\frac{1}{p}) \right)} \left\Vert\int_0^t (I - \Delta)^{\frac{\eta}{2}} e^{(t-s)\Delta}\langle \mu^N_s, (\nu  - \mu u^N_s(\cdot)\wedge A)\theta^N(x-\cdot) \rangle \, \dd s \right\Vert_{L^p} \nonumber\\ 
	& \quad + (1\wedge t)^{\frac{1}{2} \left( \bar\eta + d(\frac{1}{r}-\frac{1}{p}) \right)} \Big(\Vert (I - \Delta)^{\frac{\eta}{2}} M^N_t \Vert_{L^p} + \Vert (I - \Delta)^{\frac{\eta}{2}}J^N_t \Vert_{L^p} \Big)\nonumber\\ 
	&\eqqcolon I_1 + I_2 + I_3 + I_{4}.
\end{align}
Use \eqref{eq:Besov-heat-estimate} to get 
\begin{align*}
I_{1} &= (1\wedge t)^{\frac{1}{2} \left( \bar\eta + d(\frac{1}{r}-\frac{1}{p}) \right)} \big\Vert (I - \Delta)^{\frac{\eta-\gamma_{0}}{2}} e^{t\Delta} (I - \Delta)^{\frac{\gamma_{0}}{2}} u^N_0 \big\Vert_{L^p}  \\
 &\lesssim \lVert u^N_{0} \rVert_{\gamma_{0},r}.
\end{align*}
For $I_2$, use successively the triangle inequality, the Gaussian estimate \eqref{eq:Besov-heat-estimate}, the boundedness of $F_{A}$ and the Bessel embedding \eqref{eq:Bessel_embedding} for $\eta\geq 0$, to get
\begin{align}\label{eq:boundUt-I2}
I_2 & \lesssim (1\wedge t)^{\frac{1}{2} \left( \bar\eta + d(\frac{1}{r}-\frac{1}{p}) \right)} \int_{0}^t (t-s)^{-\frac{1+\eta}{2}}  \lVert \langle \mu^N_s, F_A(K\ast u^N_s(\cdot)) \theta^N(x-\cdot) \rangle \rVert_{L^p}\, \dd s \nonumber\\
&\lesssim (1\wedge t)^{\frac{1}{2} \left( \bar\eta + d(\frac{1}{r}-\frac{1}{p}) \right)}  \int_0^t (t-s)^{-\frac{1+\eta}{2}} \Vert u^N_s \Vert_{L^p} \, \dd s \nonumber\\
&\lesssim (1\wedge t)^{\frac{1}{2} \left( \bar\eta + d(\frac{1}{r}-\frac{1}{p}) \right)}  \int_0^t (t-s)^{-\frac{1+\eta}{2}} \Vert u^N_s \Vert_{\eta,p} \, \dd s \nonumber\\
&\lesssim \int_0^t (t-s)^{-\frac{1+\eta}{2}} (1\wedge s)^{-\frac{1}{2} \left( \bar\eta + d(\frac{1}{r}-\frac{1}{p}) \right)} \, U(s)\, \dd s .
\end{align}
Now for $I_3$, use successively the triangle inequality, the Gaussian estimate \eqref{eq:Besov-heat-estimate}, the boundedness of $(\nu  - \mu u^N_s(\cdot)\wedge A)$ and the Bessel embedding \eqref{eq:Bessel_embedding} for $\eta\geq 0$,  to write 
\begin{align}\label{eq:boundUt-I3}
	I_{3}&\leq \left\Vert\int_0^t (I - \Delta)^{\frac{\eta}{2}} e^{(t-s)\Delta}\langle \mu^N_s, (\nu  - \mu u^N_s(\cdot)\wedge A)\theta^N(x-\cdot) \rangle \, \dd s \right\Vert_{L^p} \nonumber\\
	& \lesssim \int_0^t (t-s)^{-\frac{\eta}{2}} \left\Vert \langle \mu^N_s, (\nu  - \mu u^N_s(\cdot)\wedge A)\theta^N(x-\cdot) \rangle \right\Vert_{L^p}  \dd s \nonumber\\ 
	&\lesssim \int_0^t (t-s)^{-\frac{\eta}{2}} (1\wedge s)^{-\frac{1}{2} \left( \bar\eta + d(\frac{1}{r}-\frac{1}{p}) \right)} \, U(s)\, \dd s .
\end{align}
Then for $I_{4}$, we first use the Bessel embedding \eqref{eq:Bessel_embedding} with $p\geq 2$ (since $p\geq r> d$ and $d\geq 2$); it then remains to apply Propositions~\ref{KS_estimate_Bessel_Brownian_integral} and \ref{KS_estimate_Bessel_Poisson_integral}, which reads
\begin{align}\label{eq:boundUt-I4}
\E[(I_{4})^m] &\lesssim (1\wedge t)^{\frac{1}{2} \left( \eta + d(\frac{1}{r}-\frac{1}{p}) \right)} \Big( \E \Vert M^N_t \Vert_{\eta+d(\frac{1}{2}-\frac{1}{p}),2}^m + \E \Vert J^N_t \Vert_{\eta+d(\frac{1}{2}-\frac{1}{p}),2}^m \Big) \nonumber\\
&\lesssim N^{-\frac{1}{2} \big(1-\alpha\big(d+2(\eta+d(\frac{1}{2}-\frac{1}{p}))\big)\big)+\varepsilon}
+ N^{-\frac{1}{2} \big(1-\alpha\big(d+2(\eta+d(\frac{1}{2}-\frac{1}{p}))-\frac{4}{m}\big)\big)+\varepsilon} \nonumber\\
&\lesssim 1,
\end{align}
using the assumption $\alpha < \frac{1}{2( d +  \eta - \frac{d}{p} )}$.

Hence, in view of \eqref{eq:decomp-Ut}-\eqref{eq:boundUt-I2}-\eqref{eq:boundUt-I3} and \eqref{eq:boundUt-I4}, we get that there exists $C = C_{\eta,r,p,d,A,\mu,\nu,T}$ such that
\begin{align*}
	\lVert U(t) \rVert_{L^m_{\Omega}} \leq C \Big(1+ \lVert u^N_{0} \rVert_{L^m(\Omega;H^{\gamma_{0}}_{r})} + \int_0^t (t-s)^{-\frac{1+\eta}{2}} (1\wedge s)^{-\frac{1}{2} \left( \bar\eta + d(\frac{1}{r}-\frac{1}{p}) \right)} \, \lVert U(s) \rVert_{L^m_{\Omega}}\, \dd s \Big).
\end{align*}
Since we assumed $\sup_{N\geq 1} \E \lVert u^N_{0} \rVert_{\gamma_{0},r}^m <\infty$ and $\frac{1+\eta}{2} + \frac{1}{2}( \bar\eta + d(\frac{1}{r}-\frac{1}{p}) )<1$, Grönwall's lemma for convolution integrals, see Lemma~\ref{lem:Gronwall}, allows us to conclude the proof.
%
\end{proof}

\subsection{Convergence of the mollified empirical measure: Proof of Theorem~\ref{th:convergence_moderate}}
\label{subsec:proofThm2}

Using the mild formulation \eqref{eq:KSmild} of $u$ and \eqref{KS_mollified_empirical_mesure_equation_mild} for $u^N$, we  write 
\begin{align*}
	u^N_t(x) - u_t(x) &= e^{t\Delta}(u^N_0 - u_0)(x) \\ 
	&\quad- \chi \int_0^t \nabla \cdot e^{(t-s)\Delta} \left[ \langle \mu^N_s, F_A(K\ast u^N_s(\cdot)) \theta^N(x-\cdot) \rangle - (K\ast u_s)(x) u_s(x)\right]  \dd s  \\
	 &\quad + \int_0^t e^{(t-s)\Delta} \left[ \langle \mu^N_s, (\nu  - \mu u^N_s(\cdot)\wedge A)\theta^N(x-\cdot) \rangle - (\nu u_s(x) - \mu u_s^2(x)) \right] \dd s \\
	  & \quad - M^N_t + J^N_t.
\end{align*}
By the triangle inequality, we deduce that for any $t \in [0,T]$,
\begin{align}\label{Eq:proof_cv_decomposition}
	\Vert u^N_t - u_t \Vert_{L^1 \cap L^r} \notag&\leq  \Vert u^N_0 - u_0\Vert_{L^1 \cap L^r} \\ \notag 
	&\quad +  \chi \int_0^t  \big\Vert \nabla \cdot e^{(t-s)\Delta} \left[ \langle \mu^N_s, F_A(K\ast u^N_s(\cdot)) \theta^N(x-\cdot) \rangle - (K\ast u_s)(x) u_s(x)\right] \big\Vert_{L^1 \cap L^r} \, \dd s  \\ \notag
	&\quad + \int_0^t  \big\Vert  e^{(t-s)\Delta} \left[ \langle \mu^N_s, (\nu  - \mu u^N_s(\cdot)\wedge A)\theta^N(x-\cdot) \rangle - (\nu u_s(x) - \mu u_s^2(x)) \right] \big\Vert_{L^1 \cap L^r} \, \dd s \\ \notag 
	& \quad + \Vert  M^N_t\Vert_{L^1 \cap L^r} + \Vert J^N_t \Vert_{L^1 \cap L^r} \\ 
	&\eqqcolon I_1 + I_2 + I_3 + I_4. 
\end{align}

Let us focus first on $I_2$, which is treated similarly in the proof of \cite[Theorem~1.3]{Pisa}, but we give the proof here for the sake of completeness. We decompose $I_2$ as follows:
\begin{align}\label{Eq:decomposition_interaction_term}
I_2 &\leq \notag\chi \int_0^t  \Big\Vert \nabla \cdot e^{(t-s)\Delta} \left[ \langle \mu^N_s, (F_A(K\ast u^N_s(\cdot)) - F_A(K\ast u^N_s(x))) \theta^N(x-\cdot) \rangle \right] \Big\Vert_{L^1 \cap L^r} \, \dd s \\ 
& \quad + \chi \int_0^t  \Big\Vert \nabla \cdot e^{(t-s)\Delta} \left[ F_A(K\ast u^N_s)u^N_s -F_A(K\ast u_s)u_s\right] \Big\Vert_{L^1\cap L^r} \, \dd s \\ \notag 
&\eqqcolon I_{2,A} + I_{2,B},
\end{align}
since $A$ is chosen large enough to ensure that for any $s \in [0,T]$, $F_A(K\ast u_s) = K\ast u_s$. In view of the heat kernel estimate \eqref{eq:heat-estimate}, we have
\begin{equation*}
I_{2,A} \leq C \int_0^t  \frac{1}{\sqrt{t-s}} \big\Vert \langle \mu^N_s, \big(F_A(K\ast u^N_s(\cdot)) - F_A(K\ast u^N_s(x))\big) \theta^N(x-\cdot) \rangle \big\Vert_{L^1 \cap L^r} \, \dd s .
\end{equation*}
Using the Lipschitz continuity of $F_A$ and the $\zeta$-Hölder regularity of $K\ast u^N$ given in Proposition~\ref{KS_properties_kernel}\ref{item:HolderboundK} for $\zeta= 1-\frac{d}{r}$, we deduce that 
\begin{align}\label{eq:rateKu}
	I_{2,A} &\leq C \int_0^t  \frac{\Vert u^N_s\Vert_{L^1\cap L^r}}{\sqrt{t-s}}\big\Vert \langle \mu^N_s, |x-\cdot|^{\zeta}\theta^N(x-\cdot) \rangle \big\Vert_{L^1 \cap L^r} \, \dd s.
\end{align}
Since the support of $\theta$ is contained in the unit ball, we have for all $x,y \in \R^d$,
\begin{equation}\label{inequality_holder_thetaN}
|x-y|^{\zeta} \theta^N(x-y) \leq N^{-\zeta\alpha} \theta^N(x-y).
\end{equation}
It follows that 
\begin{align*}
	I_{2,A} &\leq CN^{-\zeta\alpha} \int_0^t  \frac{\Vert u^N_s\Vert_{L^1\cap L^r}^2}{\sqrt{t-s}}\, \dd s.
\end{align*}
Hölder's inequality for $p = \frac32$ yields 
\begin{align}\label{Eq:first_term_decomposition_interaction}
	I_{2,A} \notag&\leq CN^{-\zeta\alpha} \left( \int_0^t (t-s)^{-\frac34}\,  \dd s\right)^{\frac23}  \left(\int_0^t  \Vert u^N_s\Vert_{L^1\cap L^r}^6\, \dd s\right)^{\frac13} \\ &\leq CN^{-\zeta\alpha} \left(\int_0^t  \Vert u^N_s\Vert_{L^1\cap L^r}^6\, \dd s\right)^{\frac13}.
\end{align}
Let us now treat $I_{2,B}$ defined in \eqref{Eq:decomposition_interaction_term}. Using \eqref{eq:heat-estimate}, we have 
\begin{align*} 
I_{2,B} 
&\leq C\int_0^t \frac{1}{\sqrt{t-s}} \big\Vert F_A(K\ast u^N_s)u^N_s -F_A(K\ast u_s)u_s \big\Vert_{L^1\cap L^r} \, \dd s \\ 
&\leq C\int_0^t \frac{1}{\sqrt{t-s}} \Big(\big\Vert  F_A(K\ast u^N_s)(u^N_s-u_s)\big\Vert_{L^1\cap L^r} +  \big\Vert  (F_A(K\ast u^N_s) - F_A(K\ast u_s))u_s\big\Vert_{L^1\cap L^r} \Big)\, \dd s.
\end{align*}
Using the boundedness of $F_A$ and its Lipschitz continuity, we get  \begin{align*} 
I_{2,B} 
&\leq C\int_0^t \frac{1}{\sqrt{t-s}} \left(\Vert u^N_s-u_s\Vert_{L^1\cap L^r} + \Vert K\ast (u^N_s - u_s) \Vert_{L^{\infty}} \Vert  u_s\Vert_{L^1\cap L^r}\,  \right) \dd s.
\end{align*}
It follows from Proposition~\ref{KS_properties_kernel} and the fact that $u\in L^\infty([0,T];L^1\cap L^\infty)$, see Theorem~\ref{Thm_WP_KS}, that 
\begin{align*}
I_{2,B} 
&\leq C\int_0^t \frac{1}{\sqrt{t-s}} \Vert u^N_s-u_s\Vert_{L^1\cap L^r}\, \dd s.
\end{align*}
Plugging this estimate and \eqref{Eq:first_term_decomposition_interaction} into \eqref{Eq:decomposition_interaction_term}, we have proved that
\begin{equation}\label{Eq:proof_thm_cv_I2_estimate}
	I_2 \lesssim  \int_0^t \frac{1}{\sqrt{t-s}} \Vert u^N_s - u_s \Vert_{L^1\cap L^r} \, \dd s + N^{-\zeta \alpha} \left(\int_0^t \Vert u^N_s \Vert_{L^1 \cap L^r}^6 \, \dd s \right)^{\frac{1}{3}} .
\end{equation}
Let us now treat $I_3$. By a convolution inequality, one has 
\begin{align*}
	I_3 & \leq \int_0^t  \Big(\nu \Vert u^N_s - u_s \Vert_{L^1 \cap L^r} + \mu \big\Vert \langle \mu^N_s,   (u^N_s(\cdot)\wedge A) \theta^N(x-\cdot) \rangle  -  u_s^2 \big\Vert_{L^1 \cap L^r}\Big) \, \dd s \\ 
	&\leq  \nu \int_0^t   \Vert u^N_s - u_s \Vert_{L^1 \cap L^r} \, \dd s + \mu \int_0^t  \big\Vert \langle \mu^N_s, (u^N_s(\cdot)\wedge A - u^N_s(x)\wedge A) \theta^N(x-\cdot) \rangle \big\Vert_{L^1 \cap L^r} \, \dd s \\ 
	&\quad  + \mu \int_0^t \big\Vert (u^N_s\wedge A) u^N_s -  u_s^2 \big\Vert_{L^1 \cap L^r} \, \dd s \\ 
	&\leq  \nu \int_0^t \Vert u^N_s - u_s \Vert_{L^1 \cap L^r} \, \dd s + \mu \int_0^t  \big\Vert \langle \mu^N_s,   (u^N_s(\cdot)\wedge A - u^N_s(x)\wedge A) \theta^N(x-\cdot) \rangle \big\Vert_{L^1 \cap L^r} \, \dd s \\ 
	&\quad  + \mu \int_0^t \big\Vert (u^N_s\wedge A)( u^N_s -  u_s)  \big\Vert_{L^1 \cap L^r} \, \dd s + \mu \int_0^t \big\Vert (u^N_s\wedge A - u_s) u_s \big\Vert_{L^1 \cap L^r} \, \dd s.
\end{align*}
Recalling that $A\geq A_{T}$, see \eqref{Def_cutoff_limite}, so that $\sup_{s \in [0,T]}\Vert u_s \Vert_{L^{\infty}} \leq A$, and using Theorem~\ref{Thm_WP_KS} we obtain 
\begin{align*}
	I_3 &\leq  C \int_0^t   \Vert u^N_s - u_s \Vert_{L^1 \cap L^r} \, \dd s + \mu \int_0^t  \big\Vert \langle \mu^N_s,   (u^N_s(\cdot)\wedge A - u^N_s(x)\wedge A) \theta^N(x-\cdot) \rangle \big\Vert_{L^1 \cap L^r} \, \dd s.
\end{align*}
Recall that $r>d$ and $\gamma\in(\frac{d}{r},1)$ satisfy Assumption~\ref{Assumptions_Init}, and let $\tilde{\zeta} \coloneqq \gamma-\frac{d}{r}$. It follows from \eqref{inequality_holder_thetaN} that 
\begin{align}\label{eq:rateu2}
 |\langle \mu^N_s,   (u^N_s(\cdot)\wedge A - u^N_s(x)\wedge A) \theta^N(x-\cdot) \rangle| 
 &= \frac1N \bigg\vert\sum_{k \in I_s} \theta^N(x-X^k_s) (u^N_s(X^k_s)\wedge A - u^N_s(x)\wedge A) \bigg\vert \nonumber\\ 
 &\leq \frac1N\sum_{k \in I_s} \theta^N(x-X^k_s) |x-X^k_s|^{\tilde{\zeta}} \, [ u^N_s]_{\tilde{\zeta}} \nonumber\\ 
 &\leq N^{-\tilde{\zeta}\alpha} u^N_s(x)\, [ u^N_s]_{\tilde{\zeta}}.
\end{align}
We thus have 
\begin{align*}
	 I_3 
	 &\leq  C \int_0^t  \Vert u^N_s - u_s \Vert_{L^1 \cap L^r} \, \dd s + \mu N^{-\tilde{\zeta} \alpha} \int_0^t \Vert u^N_s\Vert_{L^1 \cap L^r} \, [ u^N_s]_{\tilde{\zeta}} \, \dd s.
\end{align*}
In view of \eqref{eq:embedHolder}, $H^{{\gamma}}_{r}(\R^d)$ is continuously embedded into $\CC^{\tilde{\zeta}}(\R^d)$. Hence it follows that
\begin{align}\label{Eq:proof_thm_cv_I3_estimate}
	I_3 &\lesssim  \int_0^t   \Vert u^N_s - u_s \Vert_{L^1 \cap L^r} \, \dd s + N^{-\tilde{\zeta}\alpha} \int_0^t  \Vert u^N_s\Vert_{L^1 \cap L^r} \Vert u^N_s \Vert_{{\gamma},r} \, \dd s .
\end{align}
Coming back to \eqref{Eq:proof_cv_decomposition} and using \eqref{Eq:proof_thm_cv_I2_estimate}, \eqref{Eq:proof_thm_cv_I3_estimate} and the Grönwall lemma for convolution integrals (Lemma~\ref{lem:Gronwall}), we get that for any $N \geq 1$ and $t \in [0,T]$,
\begin{align*}
	\Vert u^N_t -u_t \Vert_{L^1 \cap L^r} 
	&\lesssim \Vert u^N_0 - u_0 \Vert_{L^1 \cap L^r} + N^{-\zeta \alpha} \left(\int_0^t \Vert u^N_s \Vert_{L^1 \cap L^r}^6 \, \dd s \right)^{\frac{1}{3}} \\ 
	&\qquad+  N^{-\tilde{\zeta}\alpha} \int_0^t \Vert u^N_s\Vert_{L^1 \cap L^r} \Vert u^N_s \Vert_{\gamma,r} \, \dd s   + \Vert  M^N_t\Vert_{L^1 \cap L^r} + \Vert J^N_t \Vert_{L^1 \cap L^r}.
\end{align*}
Since for any $x \geq 0$, $x^{\frac13} \leq 1+x$ and owing to the triangle and the Cauchy-Schwarz inequalities, we deduce that for any $t \in [0,T]$,
\begin{align*}
	\big\Vert \Vert u^N_t -u_t \Vert_{L^1 \cap L^r}\Vert_{L^m_\Omega} 
	&\lesssim \big\Vert \Vert u^N_0 - u_0 \Vert_{L^1 \cap L^r}\big\Vert_{L^m_\Omega} + N^{-\zeta \alpha} \left(1+ \int_0^t \big(\E \Vert u^N_s \Vert_{L^1 \cap L^r}^{6m}\big)^{\frac{1}{m}} \, \dd s \right) \\ 
	&\hspace{1cm}+  N^{-\tilde{\zeta}\alpha} \left(\int_0^t  \left(\E \Vert u^N_s\Vert_{L^1 \cap L^r}^{2m} \right)^{\frac{1}{2m}} \left( \E \Vert u^N_s \Vert_{\gamma,r}^{2m}\right)^{\frac{1}{2m}} \, \dd s\right) \\ 
	& \hspace{1cm} + \big\Vert \Vert  M^N_t\Vert_{L^1 \cap L^r}\big\Vert_{L^m_\Omega} + \big\Vert \Vert J^N_t \Vert_{L^1 \cap L^r}\big\Vert_{L^m_\Omega}.
\end{align*}
By Proposition \ref{KS_prop_moment_k_number_ptcles} and Proposition \ref{Prop_bounds_Bessel_norm_mollified} applied first with $\eta =0$ and $p = r$ and then with $\eta = \gamma$, $\bar\eta = (\gamma-\gamma_{0})\vee 0$ and $p = r$, we deduce that 
\begin{align*}
\big\Vert \Vert u^N_t -u_t \Vert_{L^1 \cap L^r}\big\Vert_{L^m_\Omega} 
&\lesssim  \big\Vert \Vert u^N_0 - u_0 \Vert_{L^1 \cap L^r}\big\Vert_{L^m_\Omega} + N^{-\zeta \alpha} + N^{-\tilde{\zeta}\alpha} \int_{0}^t (1\wedge s)^{-\frac{1}{2}\bar\eta} \, \dd s  \\ 
& \quad + \big\Vert \Vert  M^N_t\Vert_{L^1 \cap L^r}\big\Vert_{L^m_\Omega} + \big\Vert \Vert J^N_t \Vert_{L^1 \cap L^r}\big\Vert_{L^m_\Omega} .
\end{align*}
In the previous expression, we have $\int_{0}^t (1\wedge s)^{-\frac{1}{2}\bar\eta} \, \dd s <\infty$ in view of Assumption~\ref{Assumptions_Init}.
Finally, we apply Proposition \ref{KS_estimate_Lp_Brownian_integral} and Proposition \ref{KS_estimate_Lp_Poisson_integral} with $p=1$ and $p=r$ (and $\tilde{m}\geq m$ for some $\tilde m$ which can be written as a power of two). Observe also that $\zeta\geq \tilde \zeta$, so that $N^{-\zeta \alpha}\leq N^{-\tilde\zeta \alpha}$. All this ensures that for any $\eps >0$, there exists $C>0$ such that for any $N \geq 1$ and any $t \in [0,T]$,
\begin{align*}
\big\Vert \Vert u^N_t -u_t \Vert_{L^1 \cap L^r}\big\Vert_{L^m_\Omega} 
&\leq C \left( \big\Vert \Vert u^N_0 - u_0 \Vert_{L^1 \cap L^r}\big\Vert_{L^m_\Omega} + N^{-\tilde{\zeta}\alpha} + N^{-\frac12 \left( 1 - 2\alpha d (1 - \frac1r )\right) + \eps}\right),
\end{align*}
which concludes the proof.

\subsection{Convergence of the empirical measure: Proof of Corollary~\ref{cor:KR-convergence}}
\label{subsec:proofCor}

Let $t \in [0,T]$. We have 
	\begin{align*}
	 \big\Vert \Vert \mu^N_t - u_t \Vert_0 \big\Vert_{L^m_\Omega} 
	 & \leq  \big\Vert \Vert \mu^N_t - u^N_t \Vert_0 \big\Vert_{L^m_\Omega} + \big\Vert \Vert u^N_t - u_t \Vert_0 \big\Vert_{L^m_\Omega}  \\ 
	 & \leq  \big\Vert \Vert \mu^N_t - u^N_t \Vert_0 \big\Vert_{L^m_\Omega} + \big\Vert \Vert u^N_t - u_t \Vert_{L^1}\big\Vert_{L^m_\Omega}.
	\end{align*}
	A simple computation proves that, for $\phi : \R^d \to \R$ measurable and bounded, there is 
	\begin{align*}
	|\langle \mu^N_t - u^N_t,\phi \rangle |  
	&\leq \Big\langle \mu^N_t, \int_{\R^d} \theta(y) \big| \phi(\cdot) - \phi (yN^{-\alpha} -\cdot)\big|\, \dd y\Big\rangle  \\ 
	&\lesssim \Vert \phi \Vert_{\text{Lip}}N^{-\alpha}\langle \mu^N_t,1\rangle.
	\end{align*}
	It follows from \eqref{eq:defKantorovich} and Proposition~\ref{KS_prop_moment_k_number_ptcles} that 
	$$\big\Vert \Vert \mu^N_t - u^N_t \Vert_0 \big\Vert_{L^m_\Omega} \lesssim N^{-\alpha} \lesssim N^{-\varrho +\eps},$$
	using the definition of $\varrho$ in Theorem~\ref{th:convergence_moderate}.


\bibliographystyle{amsplainhyper_m}
\bibliography{Branching-KS}

\end{document}